\documentclass[10pt, reqno]{amsart}

\usepackage[a4paper,margin=1.1in]{geometry}

\usepackage{amsmath,amssymb,amsthm, epsfig}
\usepackage{mathalfa}
\usepackage{amsfonts}
\usepackage{amsmath}
\usepackage{amssymb}
\usepackage[mathscr]{euscript}
\usepackage{esint}
\usepackage{hyperref}
\usepackage[nameinlink]{cleveref}
\hypersetup{colorlinks=true,linkcolor=blue,citecolor=blue,urlcolor=blue}
\usepackage{enumitem}
\usepackage[dvipsnames]{xcolor}
\usepackage{comment}

\makeatletter
\renewcommand{\subsection}{\@startsection{subsection}{2}%
	\z@{\linespacing\@plus.7\linespacing}{.5\linespacing}%
	{\normalfont\scshape}}
\makeatother

\newtheorem{theorem}{Theorem}[section]

\newtheorem{definition}{Definition}[section]

\newtheorem{lemma}{Lemma}[section]
\newtheorem{corollary}{Corollary}[section]
\newtheorem{proposition}{Proposition}[section]
\newtheorem{remark}{Remark}[section]

\newcommand{\intav}[1]{\mathchoice {\mathop{\vrule width 6pt height 3 pt depth  -2.5pt
			\kern -8pt \intop}\nolimits_{\kern -6pt#1}} {\mathop{\vrule width
			5pt height 3  pt depth -2.6pt \kern -6pt \intop}\nolimits_{#1}}
	{\mathop{\vrule width 5pt height 3 pt depth -2.6pt \kern -6pt
			\intop}\nolimits_{#1}} {\mathop{\vrule width 5pt height 3 pt depth
			-2.6pt \kern -6pt \intop}\nolimits_{#1}}}

\title[Gradient regularity with variable degeneracy]{Gradient Regularity for Fully Nonlinear Equations with Variable Degeneracy and Hamiltonian Lower-Order Terms}

\author[G. Cosmo, R. R. Costa, \and D. Marcon]{Gleiciano Cosmo, Rafael R. Costa, \and Diego Marcon}

\thanks{%
	\begin{tabular}{@{}l}
		Instituto de Matemática e Estatística, Universidade Federal do Rio Grande do Sul, Porto Alegre, Brazil. \\
		emails: \url{gleiciano.santos@ufrgs.br}, \url{ramos.costa@ufrgs.br}, \url{diego.marcon@ufrgs.br}.
\end{tabular}}

\begin{document}

	\begin{abstract} 
		We study local regularity properties of viscosity solutions to fully nonlinear elliptic equations with variable gradient degeneracy and Hamiltonian-type lower-order terms,
		\[
		|\nabla u|^{p(x)}F(\nabla^{2}u)
		+
		a(x)|\nabla u|^{q(x)}
		=
		f(x).
		\]
		Here, $F$ is uniformly elliptic, while the exponents $p$ and $q$ are allowed to vary in space. We prove interior Hölder estimates for the gradient, with an exponent determined by the maximal degeneracy rate and by the regularity available for the associated homogeneous uniformly elliptic equation. We also obtain pointwise improvements at points where the source term and the Hamiltonian coefficient vanish with prescribed Hölder rates. Finally, at extremal points, we establish a Schauder-type estimate showing that the solution separates from its extremal value with order strictly larger than two. The proofs combine compactness estimates for shifted equations, stability of viscosity solutions, and improvement-of-flatness iterations.
		
		\bigskip
		
		\noindent{\it 2020 Mathematics Subject Classification:} 35B65, 35J60, 35J70, 35D40.
		
		\noindent\textit{Keywords:} {Fully nonlinear elliptic equations; variable degeneracy; Hamiltonian terms; viscosity solutions; H\"older gradient estimates.}
	\end{abstract}
	\vspace{3mm}
	%{\small \tableofcontents}
	\date{\today}
	
	\maketitle
	
	\section{Introduction} \label{sct intro}
	
	In this paper, we study local regularity properties of viscosity solutions to fully nonlinear elliptic equations with variable degeneracy and Hamiltonian-type lower-order terms. More precisely, we consider equations of the form
	\begin{equation}\label{eq principal}
		|\nabla u|^{p(x)}F(\nabla^{2}u)+a(x)|\nabla u|^{q(x)}=f(x)
		\quad \text{in } \Omega,
	\end{equation}
	where $\Omega\subset\mathbb{R}^{n}$ is a bounded domain, $F$ is a uniformly elliptic fully nonlinear operator, $a$ is a bounded coefficient, and $p,q$ are variable exponents satisfying suitable structural assumptions.
	
	The first term in \eqref{eq principal} represents a fully nonlinear diffusion whose ellipticity degenerates along the critical set $\{\nabla u=0\}$: the factor $|\nabla u|^{p(x)}$ modulates the strength of the elliptic operator according to both the size of the gradient and the spatial position, with $p(x)$ measuring the local rate of degeneracy. The term $a(x)|\nabla u|^{q(x)}$ is a Hamiltonian-type lower-order contribution, depending only on the first-order behavior of $u$; here, $a(x)$ measures its local intensity, while $q(x)$ determines its growth in the gradient variable. Thus, the relation between $p(x)$ and $q(x)$ is central to the scaling of the equation and to the control of the Hamiltonian term. Finally, $f(x)$ represents a forcing term, whose interaction with the degeneracy can be nontrivial.

	The main contribution of this paper is to show that this combination of effects still preserves a robust regularity theory. First, in \Cref{teo_c1alpha}, we establish interior Hölder estimates for the gradient of viscosity solutions to \eqref{eq principal}, with an exponent determined by the worst degeneracy rate of the leading term and by the regularity available for the associated homogeneous uniformly elliptic equation. We then prove in \Cref{Thm2} that this regularity can be improved pointwise at locations where the source term and the Hamiltonian coefficient vanish with prescribed Hölder rates. Finally, in \Cref{SchauderEstimate},  at extremal points, we obtain a Schauder-type improvement at the level of second-order growth: although no full Hessian expansion is asserted, the solution separates from its extremal value with order strictly larger than two. In this sense, our results extend the known regularity theory for gradient-degenerate fully nonlinear equations to a variable-exponent setting with an additional first-order Hamiltonian term.

	A basic model underlying equation \eqref{eq principal} is the degenerate fully nonlinear equation
	\begin{equation}\label{eq:basic-model}
		|\nabla u|^{\gamma}F(\nabla^{2}u)=f
		\quad \text{in } B_1,
	\end{equation}
	where $\gamma\geq 0$ is constant and $F$ is uniformly elliptic. Equations of this type, and their singular counterparts, are developed in a series of works by Birindelli and Demengel; see, for instance, \cite{BD04,BD07,BD09,BD10}. In particular, gradient regularity is first obtained in special settings, such as radial solutions, and later for Dirichlet problems associated with fully nonlinear degenerate elliptic equations \cite{BD12,BD14}.
	
	A general interior $\mathrm{C}^{1,\alpha}$ theory for \eqref{eq:basic-model} is established by Imbert and Silvestre \cite{IS13}. Their proof combines compactness estimates, the Jensen--Ishii maximum principle, and an improvement-of-flatness iteration that approximates the graph of $u$ by affine functions on smaller and smaller balls. This approach shows that the regularity of \eqref{eq:basic-model} is governed by two different obstructions: the degeneracy exponent $\gamma$ and the regularity available for the homogeneous uniformly elliptic equation.
	More precisely, the Hölder exponent for the gradient cannot, in general, exceed $1/(1+\gamma)$. This can already be seen from explicit radial examples, such as
	\[
	u(x)=c|x|^{1+\frac{1}{1+\gamma}},
	\]
	which gives a viscosity solution of \eqref{eq:basic-model}, for suitable choices of $c$, when $F(\nabla^2u)=\operatorname{tr}(\nabla^2u)=\Delta u$ and $f$ is constant. Its gradient behaves like $|x|^{1/(1+\gamma)}$ near the origin. On the other hand, even in the uniformly elliptic homogeneous regime, we are limited by the exponent $\alpha_0\in(0,1)$ for which solutions of
	\[
	F(\nabla^2 h)=0
	\quad \text{in } B_1
	\]
	are locally of class $\mathrm{C}^{1,\alpha_0}$; see, for instance, \cite{Caffa-Cabre}. For background on universal continuity estimates for fully nonlinear elliptic equations, see also \cite{EdTeix}. Hence, in the general uniformly elliptic case, we obtain that, locally, solutions of \eqref{eq:basic-model} have Hölder continuous gradients with exponent
	\[
	\alpha\in(0,\alpha_0)\cap
	\left(0,\frac{1}{1+\gamma}\right],
	\]
	with estimate
	\[
	\|h\|_{\mathrm{C}^{1,\alpha_0}(B_{1/2})}
	\leq C\|h\|_{\mathrm{L}^{\infty}(B_1)}.
	\]
	When $F$ is convex or concave, the optimal exponent dictated by the degeneracy is achieved, as in \cite[Corollary 3.2]{ART15}. Geometric compactness methods for nonvariational singular elliptic equations are also developed in \cite{AraujoRicarteTeixeira17}, providing an important perspective on the regularity theory for equations whose ellipticity degenerates or becomes singular near critical points. Sharp regularity phenomena related to critical exponents also appear in the study of degenerate elliptic equations, notably in connection with the $C^{p'}$-regularity conjecture; see \cite{AraujoTeixeiraUrbano17,AraujoTeixeiraUrbano18}.

	Several developments of \eqref{eq:basic-model} have since been obtained in different directions. For variable-exponent degeneracies, Bronzi, Pimentel, Rampasso, and Teixeira \cite{BPRT20} study equations of the form
	\[
	|\nabla u|^{\theta(x)}F(\nabla^2u)=f(x),
	\]
	under the condition that $\inf\theta>-1$, and prove local $\mathrm{C}^{1,\alpha}$ estimates with constants that are independent of the modulus of continuity of the exponent. In the same direction, Fang, Rădulescu, and Zhang \cite{FRZ21} consider variable-exponent double-phase type degeneracies of the form
	\[
	\left[|\nabla u|^{p(x)}+a(x)|\nabla u|^{q(x)}\right]F(\nabla^2u)=f(x),
	\]
	obtaining sharp local gradient Hölder estimates by means of compactness, scaling, and refined im-provement-of-flatness arguments.
	
	The interest in variable exponents and related nonstandard growth structures is also motivated by variational problems, where the growth law may encode spatially dependent features of the medium or of the data; see, for instance, \cite{ChallalLyaghfouriRodriguesTeymurazyan14,MarconRodriguesTeymurazyan18,RodriguesTeymurazyan11}. This flexibility is particularly useful, for instance, in image restoration and related image-processing models: in \cite{ChenLevineRao06}, variable-exponent functionals are used to combine total-variation type regularization near edges with more diffusive smoothing effects in homogeneous regions; see also \cite{LiLiPi10} for related variable-exponent models in image restoration. Although the present paper is set in the fully nonlinear nonvariational framework, this variational background provides part of the motivation for studying elliptic equations whose degeneracy varies from point to point.
	
	A related line of research concerns non-homogeneous degeneracies and geometric tangential methods. Regularity estimates for fully nonlinear elliptic equations with nonhomogeneous degeneracy are obtained in \cite{DeFilippis21}. In \cite{DaSilvaRicarte20}, da Silva and Ricarte obtain sharp gradient estimates for fully nonlinear models whose prototype is
	\[
	\Big[|\nabla u|^{p}+a(x)|\nabla u|^{q}\Big]\mathcal{M}^{+}_{\lambda,\Lambda}(\nabla^2u)=f(x,u).
	\]
	Further global estimates for equations with unbalanced variable degeneracy are established in \cite{BezerraDaSilvaRampassoRicarte23}. Nonlocal analogues of gradient-degenerate elliptic equations are studied, for instance, in \cite{AndradeDosPrazeresSantos24,DT21}, where interior regularity estimates are obtained for fractional or integro-differential equations that degenerate with the gradient.
	
	The presence of Hamiltonian lower-order terms introduces an additional difficulty, since the first-order contribution may compete with the elliptic diffusion at different scales. More recently, for constant exponents, optimal quantitative gradient estimates for degenerate elliptic equations with Hamiltonian terms are investigated in \cite{AndradeNascimento25}. Related regularity questions for fully nonlinear equations with gradient dependence or Hamiltonian-type growth have been studied in several settings; see, for instance, \cite{Birin-Demen,Nornberg19}. Schauder-type estimates at special points for fully nonlinear degenerate elliptic equations were also obtained in \cite{TNasc,EdT}. 
	%
%	The presence of Hamiltonian lower-order terms introduces an additional difficulty, since the first-order contribution may compete with the degenerate elliptic diffusion at different scales. For constant exponents, Hölder regularity of the gradient for fully nonlinear equations with Hamiltonian terms is obtained in \cite{Birin-Demen}. More recently, optimal quantitative gradient estimates for degenerate elliptic equations with Hamiltonian terms are investigated in \cite{AndradeNascimento25}. Schauder-type estimates at special points for fully nonlinear degenerate elliptic equations are also obtained in \cite{TNasc}. 
	%
	We also mention related developments on sharp regularity and geometric properties of free boundaries in degenerate obstacle, dead-core, quenching, and $\infty$-Laplacian type problems; see, for instance, \cite{AmaralDaSilvaRicarteTeymurazyan19,AraujoLeitaoTeixeira16,AraujoRicarteTeixeira17,AraujoTeymurazyanUrbano24,AraujoTeymurazyanVoskanyan23,DiehlTeymurazyan20,DosPrazeresTeymurazyanUrbano25,RicarteDaSilvaTeymurazyan17}.

	The present work combines these directions by allowing both variable degeneracy exponents and a Hamiltonian lower-order term in \eqref{eq principal}, and by deriving not only local gradient estimates but also pointwise improvements under vanishing assumptions on the data.

%	-----------------------------
%	
%	The purpose of the present paper is to combine these perspectives by studying the local gradient regularity of viscosity solutions to equations with variable degeneracy and Hamiltonian-type lower-order terms, namely \eqref{eq principal}. \textcolor{red}{Comentar os outros resultados}
%	
	
	\smallskip

The paper is organized as follows.  In \Cref{hipoteses}, we fix the notation, the structural assumptions, the viscosity framework, and state our three main results. In \Cref{regularidade secao}, we prove Hölder compactness estimates for shifted equations by means of the doubling of variables method. These estimates are used in \Cref{sec_compactness} to establish the approximation lemma, which allows normalized solutions to be compared with solutions of homogeneous uniformly elliptic equations. In \Cref{sec_c1alpha}, we prove the local Hölder regularity of the gradient by an improvement-of-flatness iteration. The pointwise improvement under Hölder vanishing assumptions on the data is proved in \Cref{sec:improved-pointwise}. Finally, in \Cref{sec:schauder-extremal}, we prove the pointwise Schauder-type estimate at extremal points.

	\section{Notation and Main Results}\label{hipoteses}
	
	In this section, we fix the notation, structural assumptions, and viscosity framework used throughout the paper. We also state our main regularity results. The first theorem gives local Hölder continuity of the gradient for the full equation. The second and third results concern pointwise improvements at those points where the source term and the Hamiltonian coefficient vanish with prescribed Hölder rates.

	\subsection{Assumptions, Definitions, and Preliminaries}
	
	We begin by recalling the ellipticity framework. Let $\mathcal S(n)$ denote the space of real symmetric $n\times n$ matrices. Throughout the paper, $0<\lambda\leq\Lambda$ are fixed ellipticity constants.
	
	\smallskip
	
	\begin{enumerate}[label=$(\mathrm{A}\arabic*)$]
		\item \label{hipoteseF} For $M\in\mathcal S(n)$, denote by
		$\{e_i\}_{i=1}^n$ the eigenvalues of $M$. The Pucci extremal operators are defined by
		\[
		P^+_{\lambda,\Lambda}(M)
		:=
		\Lambda\sum_{e_i>0}e_i
		+
		\lambda\sum_{e_i<0}e_i
		\quad
		\text{and}
		\quad
		P^-_{\lambda,\Lambda}(M)
		:=
		\lambda\sum_{e_i>0}e_i
		+
		\Lambda\sum_{e_i<0}e_i.
		\]
		We assume that the operator $F:\mathcal S(n)\to\mathbb R$ is uniformly
		$(\lambda,\Lambda)$-elliptic, namely, that for every $M,N\in\mathcal S(n)$,
		\begin{equation}\label{eq:full-pucci-ellipticity}
			P^-_{\lambda,\Lambda}(M-N)
			\le
			F(M)-F(N)
			\le
			P^+_{\lambda,\Lambda}(M-N).
		\end{equation}
		In particular, if $N\ge0$, then \eqref{eq:full-pucci-ellipticity} gives
		\[
		\lambda\operatorname{tr}(N)
		\le
		F(M+N)-F(M)
		\le
		\Lambda\operatorname{tr}(N).
		\] 
		Conversely, this monotonicity condition for every $N\ge0$ is equivalent to
		\eqref{eq:full-pucci-ellipticity}. We also assume, without
		loss of generality, that
		$F(0)=0$.
		
		\vspace{0.5em}
		
		\item \label{hip2} (Structural conditions) We assume that $a, p, q , f \in \mathrm{C}^{0}(\Omega)\cap L^{\infty}(\Omega)$ and, for all $x\in \Omega$,
		\[
		0\leq q(x)\leq p(x) \le p^{+} := \sup_{\Omega} p < + \infty.
		\]
	\end{enumerate}
	
	With these structural assumptions, we next specify the notion of solution of \eqref{eq principal} used in the sequel. The appropriate definition in our setting is that of viscosity solutions.

	\begin{definition}[Viscosity solutions]\label{def:viscosity}
		A function $u\in \mathrm{C}^{0}(\Omega)$ is a viscosity supersolution of
		\eqref{eq principal} if, whenever $\varphi\in C^2(\Omega)$ and
		$u-\varphi$ has a local minimum at $x_0\in\Omega$, we have
		\[
		|\nabla\varphi(x_0)|^{p(x_0)}F(\nabla^2\varphi(x_0))
		+
		a(x_0)|\nabla\varphi(x_0)|^{q(x_0)}
		\le
		f(x_0).
		\]
		It is a viscosity subsolution if, whenever $\varphi\in C^2(\Omega)$ and
		$u-\varphi$ has a local maximum at $x_0\in\Omega$,  we have
		\[
		|\nabla\varphi(x_0)|^{p(x_0)}F(\nabla^2\varphi(x_0))
		+
		a(x_0)|\nabla\varphi(x_0)|^{q(x_0)}
		\ge
		f(x_0).
		\]
		A viscosity solution is a function that is both a viscosity supersolution and a
		viscosity subsolution.
	\end{definition}
	
	We also use the same definition for shifted equations, in which the gradient variable is replaced by $\nabla u+\xi$, with $\xi\in\mathbb R^n$ fixed. This notation is convenient in the compactness and improvement-of-flatness arguments of \Cref{regularidade secao}.

	\smallskip
	
	The main technical device used to obtain the initial compactness estimates is the doubling of variables method. For convenience, we recall the standard Jensen--Ishii maximum principle; for more details and its proof, see \cite[Theorem 3.2]{Crandall1992user}.

	\begin{proposition}[Jensen--Ishii maximum principle]\label{prop:jensen-ishii}
		Let $\Omega\subset\mathbb R^n$ be open and let
		$\Phi\in C^2(\Omega\times\Omega)$. Let $u\in \mathrm{C}^{0}(\Omega)$ be a viscosity solution of
		\[
		G(x,\nabla u,\nabla^2u)=0,
		\quad\text{in }\Omega,
		\]
		where $G:\Omega\times\mathbb R^n\times\mathcal S(n)\to\mathbb R$ is continuous.
		Suppose that $(\bar x,\bar y)\in\Omega\times\Omega$ is a local maximum point of
		\[
		u(x)-u(y)-\Phi(x,y).
		\]
		Then, for every $\varepsilon>0$, there exist matrices
		$X,Y\in\mathcal S(n)$ such that the equation may be tested from above at
		$\bar x$ with first-order part $\nabla_x\Phi(\bar x,\bar y)$ and from below at $\bar y$ with first-order part $-\nabla_y\Phi(\bar x,\bar y)$. In particular, with the sign convention of \Cref{def:viscosity}, we have
		\[
		G\bigl(\bar x,\nabla_x\Phi(\bar x,\bar y),X\bigr)\ge0
		\]
		and
		\[
		G\bigl(\bar y,-\nabla_y\Phi(\bar x,\bar y),Y\bigr)\le0.
		\]
		Moreover, for $A:=\nabla^2\Phi(\bar x,\bar y)$, there holds
		\begin{equation}\label{eq:jensen-ishii-matrix}
			-\left(\frac1\varepsilon+\|A\|\right)I
			\le
			\begin{pmatrix}
				X & 0\\
				0 & -Y
			\end{pmatrix}
			\le
			A+\varepsilon A^2.
		\end{equation}
	\end{proposition}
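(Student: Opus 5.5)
The plan is to reduce the statement to the standard Crandall--Ishii--Lions theorem on sums \cite[Theorem 3.2]{Crandall1992user}. The two sign conditions follow from the semijet characterization of viscosity solutions, and the matrix inequality \eqref{eq:jensen-ishii-matrix} is the inequality produced by that theorem. We regard $u_1(x):=u(x)$ and $u_2(y):=-u(y)$ as two upper semicontinuous functions, continuous by assumption. Since $(\bar x,\bar y)$ is a local maximum of $u_1(x)+u_2(y)-\Phi(x,y)$, the theorem on sums applies directly. For every $\varepsilon>0$ it produces $X\in\mathcal S(n)$ and $Z\in\mathcal S(n)$ with
\[
(\nabla_x\Phi(\bar x,\bar y),X)\in\overline{J}^{2,+}u(\bar x),
\qquad
(\nabla_y\Phi(\bar x,\bar y),Z)\in\overline{J}^{2,+}(-u)(\bar y).
\]
They also satisfy
\[
-\Bigl(\tfrac1\varepsilon+\|A\|\Bigr)I\le \begin{pmatrix}X&0\\0&Z\end{pmatrix}\le A+\varepsilon A^2 .
\]
Setting $Y:=-Z$, we have $(-\nabla_y\Phi(\bar x,\bar y),Y)\in\overline{J}^{2,-}u(\bar y)$, and the block inequality becomes exactly \eqref{eq:jensen-ishii-matrix}.

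The remaining step is to pass from closed semijets to the inequalities for $G$. First, if $(\zeta,X)\in J^{2,+}u(\bar x)$, the standard construction gives a $C^2$ test function $\varphi$ touching $u$ from above at $\bar x$ with $\nabla\varphi(\bar x)=\zeta$ and $\nabla^2\varphi(\bar x)=X$. For example, one can use $\varphi(x)=u(\bar x)+\zeta\cdot(x-\bar x)+\tfrac12 X(x-\bar x)\cdot(x-\bar x)+\rho(|x-\bar x|)$ with a suitable $C^2$ modulus $\rho$ satisfying $\rho(r)=o(r^2)$. The subsolution property then gives $G(\bar x,\zeta,X)\ge0$. Second, for a point of the closure $\overline{J}^{2,+}$, we take approximating $(x_k,\zeta_k,X_k)\to(\bar x,\zeta,X)$ with $(\zeta_k,X_k)\in J^{2,+}u(x_k)$ and $u(x_k)\to u(\bar x)$. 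Continuity of $G$ then yields $G(\bar x,\nabla_x\Phi(\bar x,\bar y),X)\ge0$. The supersolution inequality at $\bar y$ is identical with reversed signs.

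The only delicate point is the sign convention. Depending on how the equation is written, a subsolution corresponds to $G\ge0$ rather than $G\le0$. Here we simply adopt the convention of \Cref{def:viscosity}, namely $G(x,\zeta,M)=|\zeta|^{p(x)}F(M)+a(x)|\zeta|^{q(x)}-f(x)$, for which subsolutions satisfy $G\ge0$. We also check that $G$ is continuous: this holds because $p,q\ge0$ and $a,f,p,q$ are continuous, with the convention $|0|^{0}=1$ understood as in the definition. The genuinely nontrivial content, Jensen's lemma and Alexandrov's theorem behind the theorem on sums, is taken from \cite{Crandall1992user} and not reproved.
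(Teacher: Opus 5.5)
Your proposal is correct and is essentially the paper's argument. The paper gives no proof beyond citing \cite[Theorem 3.2]{Crandall1992user}, and your reduction ($u_1=u$, $u_2=-u$, $Y:=-Z$, then passing from $\overline{J}^{2,\pm}$ to the viscosity inequalities via continuity of $G$) is exactly what that citation encodes.

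One caveat on your last paragraph, which the statement does not need since continuity of $G$ is a hypothesis there. The specific $G$ is \emph{not} globally continuous in general: if $p(x_0)=0$ but $p(x_k)>0$ along $x_k\to x_0$, then $|0|^{p(x_k)}=0$ while $|\zeta|^{p(x_0)}=1$ for every $\zeta\neq0$, so no convention for $|0|^0$ fixes this, and the same holds for $q$. What your closure argument and the paper's applications actually need is only continuity of $G_\xi$ near the relevant point, and that does hold because there $\zeta+\xi\neq0$.
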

	
	In the applications
	below, the operator $G$ has the form
	\[
	G(x,\zeta,M)
	:=
	|\zeta|^{p(x)}F(M)
	+
	a(x)|\zeta|^{q(x)}
	-
	f(x),
	\]
	or the corresponding shifted version
	\[
	G_\xi(x,\zeta,M)
	:=
	|\zeta+\xi|^{p(x)}F(M)
	+
	a(x)|\zeta+\xi|^{q(x)}
	-
	f(x).
	\]
	
	\medskip
	
	The limiting equation in our compactness arguments is a uniformly elliptic homogeneous equation. We  recall the standard interior $\mathrm{C}^{1,\alpha_0}$ estimate that serves as the regularity input in the approximation procedure. Namely, see \cite{Caffa-Cabre}, there exist universal constants
	$\alpha_0\in(0,1)$ and $C>0$, depending only on $n,\lambda,\Lambda$, such that: if
	$h\in \mathrm{C}^{0}(B_1)$ is a viscosity solution of
	\[
	F(\nabla^2h)=0,
	\quad\text{in }B_1;
	\]
	then,
	\[
	\|h\|_{\mathrm{C}^{1,\alpha_0}(B_{1/2})}
	\le
	C\|h\|_{\mathrm{L}^{\infty}(B_1)}.
	\]

	\subsection{Main Results}
	
	We now state the main results of our paper. The first theorem gives the local Hölder regularity of the gradient for solutions of \eqref{eq principal}. The admissible exponent is constrained by two mechanisms: the degeneracy rate of the equation, measured by $p^+$, and the regularity exponent $\alpha_0$ available for the homogeneous uniformly elliptic equation.

	\begin{theorem}[H\"older regularity of the gradient]\label{teo_c1alpha}
		Assume \ref{hipoteseF} -- \ref{hip2}, and let $u\in \mathrm{C}^{0}(\Omega)$ be a viscosity solution of \eqref{eq principal}. Then, $u\in \mathrm{C}^{1,\alpha}_{\rm loc}(\Omega)$ for any $\alpha$ in the range
		\begin{equation}\label{eqn:alpha-range}
			\alpha\in(0,\alpha_0)\cap\left(0,\frac{1}{1+p^{+}}\right].
		\end{equation}
		Moreover, we have the following H\"{o}lder estimate for the gradient of $u$: for fixed $\Omega'\subset\subset\Omega$, there exists a constant $c>0$, depending on $n,\lambda,\Lambda,\alpha,\operatorname{dist}(\Omega',\partial\Omega)$, and $p^{+}$, such that
		$$
		[\nabla u]_{\mathrm{C}^{0,\alpha}(\Omega')} \leq c \left(1+\|u\|_{\mathrm{L}^{\infty}(\Omega)}+\|a\|^{\frac{1}{1+\inf_{\Omega}(p-q)}}_{\mathrm{L}^{\infty}(\Omega)} +\|f\|^\frac{1}{1+\inf_\Omega p}_{\mathrm{L}^{\infty}(\Omega)}\right).
		$$
	\end{theorem}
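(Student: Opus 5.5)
We follow the Imbert--Silvestre scheme adapted to variable exponents and the Hamiltonian term. The plan has three parts: a normalization reducing to small data, a compactness and approximation step against $F(\nabla^2 h)=0$, and an improvement-of-flatness iteration that produces affine approximations at all scales.

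\emph{Normalization.} Fix $x_0\in\Omega'$ and $r_0$ comparable to $\operatorname{dist}(\Omega',\partial\Omega)$, and set $v(x)=u(x_0+r_0x)/K$ on $B_1$ with
\[
K:=\|u\|_{\mathrm{L}^\infty}+\delta^{-1}\Bigl(\|a\|_{\mathrm{L}^\infty}^{\frac{1}{1+\inf(p-q)}}+\|f\|_{\mathrm{L}^\infty}^{\frac{1}{1+\inf p}}\Bigr)+1.
\]
A direct computation shows that $v$ solves an equation of the same type, with $\tilde p(x)=p(x_0+r_0x)$ and $\tilde q(x)=q(x_0+r_0x)$. The new coefficients are
\[
\tilde a=r_0^{1+\tilde p-\tilde q}K^{\tilde q-\tilde p}a,\qquad \tilde f=r_0^{2+\tilde p}K^{-1-\tilde p}f.
\]
Since $K\ge1$ and $r_0\le1$, the exponents are controlled by $\inf(p-q)$ and $\inf p$, so the choice of $K$ gives $\|v\|_\infty\le1$, $\|\tilde a\|_\infty\le\delta$ and $\|\tilde f\|_\infty\le\delta$ for a small $\delta$ to be fixed. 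The exponents $p^+$ and $\tilde q\le\tilde p$ are preserved. It then suffices to prove a universal $C^{1,\alpha}$ estimate at the origin for such normalized $v$.

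\emph{Compactness for shifted equations.} For every $\xi\in\mathbb R^n$, consider normalized viscosity solutions $w$ of
\[
|\nabla w+\xi|^{p(x)}F(\nabla^2w)+a|\nabla w+\xi|^{q(x)}=f,\qquad \|w\|_\infty\le1,\ \|a\|_\infty,\|f\|_\infty\le\delta\le1.
\]
We prove a H\"older (indeed Lipschitz) estimate in $B_{1/2}$ that is uniform in $\xi$, by doubling variables with $\Phi=L\varphi(|x-y|)+\tfrac{K_0}{2}|x-x_1|^2$ and applying \Cref{prop:jensen-ishii}. The argument splits into two cases.
\begin{itemize}
\item If $|\xi|$ is large, then $|\nabla\Phi+\xi|\ge1$ at the test points. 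Dividing by $|\cdot|^{p}$, the equation becomes uniformly elliptic with a bounded first-order term, since $q\le p$ gives $|a||\zeta+\xi|^{q-p}\le\delta$. Regularity in this case follows from Krylov--Safonov/Ishii--Lions.
\item If $|\xi|$ is bounded, then for $L$ large the gradients $|\nabla_x\Phi+\xi|$ are at least of order $L/2$. Dividing again gives a uniformly elliptic inequality, and the concavity of $\varphi$ produces a contradiction through the matrix inequality \eqref{eq:jensen-ishii-matrix}.
\end{itemize}
Uniformity in $\xi$ and in the variable exponents holds because only $p^+$ and the lower bound on gradients enter.

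\emph{Approximation lemma.} By contradiction, using stability of viscosity solutions, for every $\varepsilon>0$ there is $\delta>0$ such that any $w$ as above is $\varepsilon$-close in $B_{1/2}$ to some $h$ with $F(\nabla^2h)=0$. To identify the limit we take sequences $\xi_k$ and $a_k,f_k\to0$ and use the compactness step. If $\xi_k$ is unbounded, dividing by $|\xi_k|^{p}$ yields $F(\nabla^2 h)=0$ directly. If $\xi_k$ is bounded, the limit satisfies $|\nabla h+\xi|^{p}F(\nabla^2h)=0$, and we need the standard Imbert--Silvestre lemma showing that such $h$ still solve $F=0$, adapted to a continuous variable $p$. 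The $\mathrm{C}^{1,\alpha_0}$ estimate for $h$ then provides an affine $\ell$ with $\sup_{B_\rho}|w-\ell|\le\rho^{1+\alpha}$ for a universal $\rho$, provided $\alpha<\alpha_0$ and $\varepsilon$ is small.

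\emph{Iteration.} We construct affine maps $\ell_k=c_k+b_k\cdot x$ with $\sup_{B_{\rho^k}}|v-\ell_k|\le\rho^{k(1+\alpha)}$, where $|b_{k+1}-b_k|\lesssim\rho^{k\alpha}$. Rescaling gives
\[
w_k(x)=\frac{v(\rho^kx)-\ell_k(\rho^kx)}{\rho^{k(1+\alpha)}},
\]
which solves the shifted equation with $\xi=b_k\rho^{-k\alpha}$. The new data are
\[
f_k=\rho^{k(1-\alpha(1+p))}f,\qquad a_k=\rho^{k(1-\alpha(1+p-q))}a\,(\text{times }\rho^{-k\alpha(q-p)}\text{-type factors}).
\]
These stay below $\delta$ exactly when $\alpha(1+p^+)\le1$, which is where the range \eqref{eqn:alpha-range} enters. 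The Hamiltonian scaling needs $\alpha(1+p-q)\le1$, which is implied by the same condition. Summing the geometric series yields $\nabla v(0)$ and the $C^{1,\alpha}$ estimate, and undoing the normalization gives the stated bound.

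\emph{Main obstacle.} The delicate step is the uniform-in-$\xi$ compactness estimate in the intermediate regime. There $|\nabla w+\xi|$ may vanish while the Hamiltonian term, of order $|\zeta+\xi|^{q}$, is not dominated by the degenerate diffusion $|\zeta+\xi|^{p}$ when the gradient is small. We would first try to handle this by ensuring the test gradients satisfy $|\zeta+\xi|\ge1$ through the choice of $L$, so that $|\zeta+\xi|^{q-p}\le1$.
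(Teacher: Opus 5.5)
Your proposal is correct and follows essentially the same route as the paper. It uses scaling to small data, then uniform-in-$\xi$ Ishii--Lions estimates split into large and bounded shifts. Your Lipschitz bound also works in the bounded regime, provided $L$ is chosen after the threshold. It then proves the approximation lemma by compactness and stability, with the Imbert--Silvestre perturbation at critical gradients, and runs the flatness iteration in which $\alpha(1+p^+)\le1$ keeps the rescaled data small.

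There are three minor points to tighten:
\begin{enumerate}
\item The rescaled Hamiltonian coefficient is $\tilde a=r_0^{2+\tilde p-\tilde q}K^{\tilde q-\tilde p-1}a$, not the formula you wrote. The extra factor $K^{-1}$ is what yields smallness when $p=q$.
\item The approximation lemma must be uniform over all $(\lambda,\Lambda)$-elliptic $F$ with $F(0)=0$. This is because $w_k$ solves an equation with the rescaled operator $\rho^{k(1-\alpha)}F(\rho^{-k(1-\alpha)}\,\cdot)$, not with $F$ itself.
\item The exponents $p_k$ in the contradiction argument need not converge, so there is no limiting equation with exponent $p$. You should pass to the limit only the information at test points where $\nabla\varphi+\xi\neq0$. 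That is all the Imbert--Silvestre lemma uses, and it survives the limit because $p_k\le p^+$.
\end{enumerate}
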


	\begin{remark}
		When $F$ is convex or concave, the obstruction coming from the homogeneous equation is improved by the so called Evans--Krylov theory, and the optimal exponent dictated by the degeneracy can be recovered; see \cite[Corollary 3.2]{ART15}. Thus, in that structural setting, \Cref{teo_c1alpha} extends the sharp exponent previously obtained for constant degeneracy to the present variable-exponent framework, namely $1/(1+p^{+})$.
	\end{remark}

	The exponent in \Cref{teo_c1alpha} is limited by the worst degeneracy rate $p^+$. However, this obstruction may be weakened at points where the lower-order data vanish with additional Hölder decay. Motivated by the pointwise improvement obtained in \cite{TNasc} for degenerate fully nonlinear equations, we next prove a higher pointwise regularity estimate for \eqref{eq principal} under vanishing assumptions on the source term and on the Hamiltonian coefficient.
	More precisely, we assume that, for some constants $K_1,K_2>0$ and $\theta_1,\theta_2\in(0,1)$,
	\begin{equation}\label{source1}
		|f(x)|\le K_1|x|^{\theta_1}
		\quad\text{and}\quad
		|a(x)|\le K_2|x|^{\theta_2}
		\quad\text{in }B_1.
	\end{equation} 
	
	Under this additional information, the scaling of the equation allows an improvement of the pointwise expansion of $u$ at the origin. For convenience, denote
	\begin{equation}\label{eqn:d+}
		d^+:=\sup_{B_1} \, (p-q).
	\end{equation}

	\begin{theorem}\label{Thm2}
		Assume \ref{hipoteseF} -- \ref{hip2}, and let $u\in \mathrm{C}^{0}(B_1)$ be a viscosity solution of \eqref{eq principal}. Assume further that $f$ and $a$ satisfy \eqref{source1}. Then, $u$ is $\mathrm{C}^{1,\gamma}$ at the origin for any $\gamma$ in the range
		\begin{equation}\label{eqn:gamma-thm2}
			\gamma\in(0,\alpha_0)
			\cap\left(0,\frac{1+\theta_1}{1+p^+}\right]
			\cap\left(0,\frac{1+\theta_2}{1+d^+}\right],
		\end{equation} where $d^{+}$ is as in \eqref{eqn:d+}. Moreover, for every $0<r<1/2$,
		$$\sup_{B_r}|u(x)-u(0)-Du(0)\cdot x|\leq C r^{1+\gamma}$$
		where $C$ depends only upon $\|u\|_{L^{\infty}(B_1)}$ and universal parameters.
	\end{theorem}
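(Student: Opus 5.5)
We argue by an improvement-of-flatness iteration centered at the origin, modelled on the proof of \Cref{teo_c1alpha} but with scales adapted to the vanishing rates in \eqref{source1}. The main tool is the approximation lemma of \Cref{sec_compactness}. It says that for every $\delta>0$ there is $\varepsilon>0$ such that the following holds. Let $v$ solve the shifted equation
\[
|\nabla v+\xi|^{\tilde p(x)}F(\nabla^2 v)+\tilde a(x)|\nabla v+\xi|^{\tilde q(x)}=\tilde f(x)
\]
in $B_1$, with $\|v\|_{L^\infty(B_1)}\le1$, $\xi\in\mathbb R^n$ arbitrary and $\|\tilde f\|_\infty+\|\tilde a\|_\infty\le\varepsilon$. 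Then some $h$ with $F(\nabla^2h)=0$ and $\|h\|_{C^{1,\alpha_0}(B_{1/2})}\le C$ satisfies $\|v-h\|_{L^\infty(B_{1/2})}\le\delta$. First normalize: replace $u$ by $u(\rho x)/M$ with $M=\|u\|_\infty+1$ and $\rho$ small. This keeps \eqref{source1} with constants $K_i\le\varepsilon$, because the rescaled data are bounded by $K_1\rho^{1+\theta_1+p(\cdot)\cdot 1}M^{-(1+p)}\rho^{...}$-type factors that go to zero with $\rho$.

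\textbf{Step 1 (one step).} Fix $\gamma$ in the range \eqref{eqn:gamma-thm2}. Since $\gamma<\alpha_0$, we can choose $r\in(0,1/4)$ and $\delta$ with $Cr^{1+\alpha_0}+\delta\le r^{1+\gamma}/2$, where $C$ is the $C^{1,\alpha_0}$ constant for $h$. The approximation lemma then gives an affine function $\ell(x)=h(0)+\nabla h(0)\cdot x$ with $|\nabla h(0)|\le C$ and $\sup_{B_r}|u-\ell|\le r^{1+\gamma}$.

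\textbf{Step 2 (iteration).} By induction we produce affine maps $\ell_k=b_k+\xi_k\cdot x$ with $\sup_{B_{r^k}}|u-\ell_k|\le r^{k(1+\gamma)}$ and $|b_{k+1}-b_k|+r^k|\xi_{k+1}-\xi_k|\le Cr^{k(1+\gamma)}$. For the inductive step we set
\[
v_k(x)=\frac{u(r^kx)-\ell_k(r^kx)}{r^{k(1+\gamma)}},
\]
which solves the shifted equation with shift $\xi=r^{-k\gamma}\xi_k$, exponents $p(r^kx)$ and $q(r^kx)$, and data
\[
\tilde f(x)=r^{k(1-\gamma)-k\gamma p(r^kx)}f(r^kx),\qquad
\tilde a(x)=r^{k(1-\gamma)+k\gamma(q-p)(r^kx)}a(r^kx).
\]
By \eqref{source1}, $|\tilde f|\le\varepsilon\,r^{k(1+\theta_1-\gamma(1+p^+))}$, and this is at most $\varepsilon$ exactly because $\gamma\le(1+\theta_1)/(1+p^+)$. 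For the Hamiltonian term we have $|\tilde a|\le\varepsilon\,r^{k(1+\theta_2-\gamma-\gamma d^+)}\le\varepsilon$, using $p-q\le d^+$ and $\gamma\le(1+\theta_2)/(1+d^+)$. Hence $v_k$ satisfies the hypotheses of the approximation lemma, and Step 1 applied to $v_k$ closes the induction.

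\textbf{Step 3 (conclusion).} The sequences $b_k$ and $\xi_k$ are Cauchy, so they converge to $u(0)$ and $Du(0)$ respectively. A standard summation of the increments then gives $\sup_{B_\rho}|u-u(0)-Du(0)\cdot x|\le C\rho^{1+\gamma}$ for every $\rho$, and undoing the normalization gives the stated estimate with $C$ depending only on $\|u\|_\infty$ and universal quantities.

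The main obstacle is the uniformity of the approximation lemma with respect to the shift $\xi$. The shifts $r^{-k\gamma}\xi_k$ may become unbounded, so the compactness (Hölder) estimates of \Cref{regularidade secao} must hold independently of $\xi$. When $|\xi|$ is large the equation is uniformly elliptic after dividing by $|\nabla v+\xi|^{p}$, and the Hamiltonian term must then be controlled by $|\tilde a|\,|\nabla v+\xi|^{q-p}$. Here $q\le p$ keeps this term bounded by $\varepsilon$. I expect this to be exactly what the earlier lemmas provide, and I will rely on them as stated.
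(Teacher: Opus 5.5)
Your argument is correct, and it takes a genuinely different route from the paper.

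\textbf{Your route.} You rerun the shifted improvement-of-flatness iteration of \Cref{prop:normalized-c1alpha}, centered at the origin. The vanishing rates in \eqref{source1} are what keep the rescaled data $r^{k(1-\gamma)-k\gamma p}f(r^kx)$ and $r^{k(1-\gamma)+k\gamma(q-p)}a(r^kx)$ below their initial size once $\gamma\le(1+\theta_1)/(1+p^+)$ and $\gamma\le(1+\theta_2)/(1+d^+)$, even though $\gamma$ may now exceed $1/(1+p^+)$. The shifts $r^{-k\gamma}\xi_k$ blow up when $Du(0)\neq0$, but \Cref{lem2-4} is uniform in $\xi$, so nothing further is needed.

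\textbf{The paper's route.} The paper never iterates with a shift. It proves \Cref{lem:improved-approximation}, where the $\mathrm{C}^{1,\alpha}$ compactness from \Cref{teo_c1alpha} forces the limiting profile to satisfy $h(0)=0$ and $\nabla h(0)=0$. It then iterates the resulting oscillation decay only while $|\nabla u(0)|\le\tilde\delta r^\gamma$ (\Cref{prop:small-gradient-growth}). Below the transition scale $\varrho=(|\nabla u(0)|/\tilde\delta)^{1/\gamma}$, $|\nabla u|$ stays away from zero. There the equation is uniformly elliptic with bounded right-hand side, and classical estimates (using $\gamma<\alpha_0$) take over.

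\textbf{What each buys.} Your route is shorter and avoids the case analysis on the size of $\nabla u(0)$. It also avoids the appeal to uniformly elliptic theory with a right-hand side, and it produces $u(0)$ and $Du(0)$ directly as limits of the affine coefficients. The paper's small-gradient/large-gradient dichotomy keeps every rescaled problem unshifted, at the price of importing the $\mathrm{C}^{1,\alpha}$ compactness of \Cref{teo_c1alpha} and external estimates. Its idea of forcing the limit profile to vanish at the origin is the one refined in \Cref{sec:schauder-extremal}, where the profile is forced to be constant.

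\textbf{Minor clean-ups.}
\begin{itemize}
\item Your normalization bound should read $|\bar f(x)|\le K_1\rho^{2+\theta_1}|x|^{\theta_1}$ and $|\bar a(x)|\le K_2\rho^{2+\theta_2}|x|^{\theta_2}$, using $M\ge1$ and $\rho<1$.
\item Since you bound $\tilde f$ and $\tilde a$ separately by $\varepsilon$, normalize to $\varepsilon/2$ so that their sum meets the lemma's hypothesis.
\item The approximating $h$ solves a limiting operator $\mathcal F$, not $F$ itself. Only its $(\lambda,\Lambda)$-ellipticity is used, so nothing changes.
\item Passing from small scales to all $r<1/2$ uses the bound $|Du(0)|\le CM/\rho$, which comes from $|\xi_\infty|\le C$.
\end{itemize}
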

	
	At extremal points, the previous pointwise analysis can be pushed one order further. Indeed, if the origin is a local minimum of $u$, then the affine part of the expansion reduces to the constant $u(0)$. Under stronger compatibility conditions between the H\"older decay of $f$ and $a$ and the degeneracy exponents, we obtain a pointwise Schauder-type estimate; this result is motivated by \cite{TNasc}.
	
	Here, we assume additionally that
	\begin{equation}\label{eq:theta-schauder-assumptions}
		0\le p^{+}:=\sup_{B_1}p <\theta_1<1,
		\qquad
		0\le d^+:=\sup_{B_1} \, (p-q)<\theta_2<1.
	\end{equation}
	Under these restrictions, denote
	\begin{equation}\label{eq:alpha-tilde-def}
		\tilde\alpha
		:=
		\min\left\{
		\frac{\theta_1-p^{+}}{1+p^{+}},
		\frac{\theta_2-d^{+}}{1+d^{+}}
		\right\}>0.
	\end{equation}
	
	Our third result is a pointwise Schauder-type estimate: although no full second-order expansion is asserted, the solution separates from its extremal value with order strictly larger than two.

	\begin{theorem}[Pointwise Schauder estimate at an extremal point]
		\label{SchauderEstimate}
		Assume \ref{hipoteseF} -- \ref{hip2}, and let $u\in \mathrm{C}^{0}(B_1)$ be a viscosity solution of \eqref{eq principal}. Assume in addition that
		\eqref{source1} and \eqref{eq:theta-schauder-assumptions} hold, and that the origin is a
		local minimum of $u$. Then, there exist constants $C>0$ and $r_0\in(0,1)$ such
		that
		\[
		\sup_{B_r}|u(x)-u(0)|
		\le
		C r^{2+\tilde\alpha}
		\]
		for every $0<r\le r_0$, where $\tilde\alpha$ is give by \eqref{eq:alpha-tilde-def}. The constant $C$ depends on
		$\|u\|_{\mathrm{L}^{\infty}(B_1)}$ and on the universal parameters $n,\lambda,\Lambda,p^{+},d^{+},\theta_1,\theta_2,K_1,K_2$.
	\end{theorem}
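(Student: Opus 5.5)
The plan is a discrete iteration at the minimum point, built on a compactness lemma that exploits the sign of $u-u(0)$. The aim is to show that, after a preliminary normalization, the rescalings
\[
v_k(x):=\frac{u(\rho^k x)-u(0)}{\rho^{k(2+\tilde\alpha)}}
\]
satisfy $0\le v_k\le 1$ in $B_1$ for all $k\ge 0$, for a universal $\rho\in(0,1/2)$. From this, $\sup_{B_r}|u-u(0)|\le C r^{2+\tilde\alpha}$ for $0<r\le r_0$ follows in the standard way.

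\textbf{Scaling step.} If $w(x)=(u(sx)-u(0))/s^{2+\tilde\alpha}$, then
\[
\nabla w=s^{-(1+\tilde\alpha)}\nabla u(sx),\qquad \nabla^2 w=s^{-\tilde\alpha}\nabla^2u(sx).
\]
Dividing the equation by $s^{\tilde\alpha+(1+\tilde\alpha)p(sx)}$, $w$ solves an equation of the same type,
\[
|\nabla w|^{\tilde p}F(\nabla^2 w)+\tilde a|\nabla w|^{\tilde q}=\tilde f,
\]
with $\tilde p=p(s\,\cdot)$, $\tilde q=q(s\,\cdot)$, and (taking $s=\rho^k$)
\[
|\tilde f(x)|\le K_1 s^{\theta_1-\tilde\alpha-(1+\tilde\alpha)p^+},\qquad |\tilde a(x)|\le K_2 s^{\theta_2-\tilde\alpha-(1+\tilde\alpha)d^+}.
\]
Both exponents are nonnegative precisely because of the definition \eqref{eq:alpha-tilde-def}. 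This uses $s\le 1$ and $p\le p^+$, $p-q\le d^+$.

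The same scaling with an initial factor handles the normalization. Choose $r_0$ so that $0$ is a minimum of $u$ on $B_{r_0}$, and replace $u$ by $(u(r_0x)-u(0))/M$ for a large $M$. This makes $0\le v_0\le 1$ and makes $\|\tilde f\|_\infty,\|\tilde a\|_\infty\le\delta$ for any prescribed $\delta$. Hence every $v_k$ solves an equation of the form \eqref{eq principal} with data bounded by $\delta$.

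\textbf{Key lemma (the main step).} For every $\rho\in(0,1/2)$ there is $\delta>0$ with the following property. If $v$ is a viscosity solution of such an equation in $B_1$ with $0\le v\le 1$, $v(0)=0$, and $\|f\|_\infty+\|a\|_\infty\le\delta$, then $\sup_{B_\rho}v\le\rho^{2+\tilde\alpha}$.

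I would argue by contradiction. Take a sequence $v_j$ with data tending to zero and $\sup_{B_\rho}v_j>\rho^{2+\tilde\alpha}$. By the Hölder compactness estimates of \Cref{regularidade secao} (with shift $\xi=0$), a subsequence converges locally uniformly in $B_{3/4}$ to some $h$. As in the approximation lemma of \Cref{sec_compactness}, stability of viscosity solutions together with the Imbert--Silvestre argument for removing the degenerate factor $|\nabla v|^{p}$ shows that $h$ solves $F(\nabla^2h)=0$. The Hamiltonian term also vanishes in the limit, since $\tilde a\to0$ and $q\le p$.

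The limit satisfies $h\ge0$ and $h(0)=0$, so $h$ attains an interior minimum. Since $h$ is a supersolution of $P^-_{\lambda,\Lambda}(\nabla^2h)\le 0$, the strong maximum principle gives $h\equiv0$. This contradicts the lower bound $\sup_{B_\rho}h\ge\rho^{2+\tilde\alpha}$ inherited from the uniform convergence. I expect the only delicate point to be the passage to the limit equation. It requires that the compactness and approximation results of \Cref{regularidade secao,sec_compactness} apply with variable exponents $p_j,q_j$ whose bounds depend only on $p^+$, and with no continuity modulus of the exponents needed.

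\textbf{Iteration.} Fix $\rho$ as above and take $\delta=\delta(\rho)$ from the key lemma. Apply the lemma to $v_k$: the scaling step guarantees that the data of $v_k$ remain bounded by $\delta$. Since $v_{k+1}(x)=v_k(\rho x)/\rho^{2+\tilde\alpha}$, we get $0\le v_{k+1}\le1$ on $B_1$. Nonnegativity and $v_{k+1}(0)=0$ are preserved because $0$ is a minimum. Induction then gives
\[
\sup_{B_{\rho^k r_0}}(u-u(0))\le M\,\rho^{k(2+\tilde\alpha)}.
\]
For $\rho^{k+1}r_0<r\le\rho^k r_0$ this yields $\sup_{B_r}|u-u(0)|\le C r^{2+\tilde\alpha}$, with $C=M\rho^{-(2+\tilde\alpha)}r_0^{-(2+\tilde\alpha)}$ depending only on the stated quantities.

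Notably, $\alpha_0$ never enters, since the limit profile is identically zero rather than merely $\mathrm{C}^{1,\alpha_0}$.
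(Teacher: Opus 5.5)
Your proposal is correct and follows essentially the paper's proof: a contradiction-compactness lemma (the paper's \Cref{lemaC2}, which you state with a general $\rho$ where the paper takes $\rho=1/2$) in which the strong maximum principle forces the limit profile to be constant. This lemma is then iterated through the scaling $(u(sx)-u(0))/s^{2+\tilde\alpha}$, whose data stay small precisely because of the definition of $\tilde\alpha$. Tracking $0\le v_k\le 1$ on all of $B_1$, rather than assuming only a local minimum as the paper's lemma does, is the right hypothesis for the limit step, since a minimum on neighborhoods that shrink with $j$ would not automatically pass to the uniform limit.
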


	\subsection{Scaling and smallness regime}\label{subsec_small}
	
	The following scaling is used repeatedly in the proofs we present. Its role is to reduce the estimates to normalized solutions with bounded oscillation and small lower-order data, without changing the ellipticity constants of the operator.
	
	Let $u$ be a viscosity
	solution of \eqref{eq principal} in $\Omega$. Fix $x_0\in\Omega$ and
	$0<\rho<\operatorname{dist}(x_0,\partial\Omega)$. For $K>0$, define
	\[
	v(x):=\frac{u(x_0+\rho x)}{K},
	\quad x\in B_1.
	\]
	Then
	\[
	\nabla v(x)=\frac{\rho}{K}\nabla u(x_0+\rho x)
	\quad\text{and}\quad
	\nabla^2v(x)=\frac{\rho^2}{K}\nabla^2u(x_0+\rho x).
	\]
	Consequently, $v$ solves an equation of the same type,
	\begin{equation}\label{eq:scaled-equation-prelim}
		|\nabla v|^{\bar p(x)}\bar F(\nabla^2v)
		+
		\bar a(x)|\nabla v|^{\bar q(x)}
		=
		\bar f(x),
		\quad\text{in }B_1,
	\end{equation}
	where
	\[
	\bar p(x):=p(x_0+\rho x),
	\qquad
	\bar q(x):=q(x_0+\rho x),\\[0.5em]
	\]
	\[
	\bar F(M):=
	\frac{\rho^2}{K}
	F\left(\frac{K}{\rho^2}M\right),
	\quad
	\bar f(x):=
	\frac{\rho^{\bar p(x)+2}}
	{K^{\bar p(x)+1}}
	f(x_0+\rho x),
	\]
	and
	\[
	\bar a(x):=
	\rho^{\bar p(x)-\bar q(x)+2}
	K^{\bar q(x)-\bar p(x)-1}
	a(x_0+\rho x).
	\]
	The operator $\bar F$ is still uniformly $(\lambda,\Lambda)$-elliptic and
	$\bar F(0)=0$.
	This scaling allows us to work in a normalized smallness
	regime: indeed, choosing $K\ge1$ large enough in terms of
	\[
	1+\|u\|_{\mathrm{L}^{\infty}(\Omega)}
	+
	\|a\|_{\mathrm{L}^{\infty}(\Omega)}^{\frac{1}{1+\inf_\Omega(p-q)}}
	+
	\|f\|_{\mathrm{L}^{\infty}(\Omega)}^{\frac{1}{1+\inf_\Omega p}},
	\]
	and then choosing $\rho\in(0,1)$ sufficiently small in terms of the desired
	smallness parameter, we may assume
	\begin{equation}\label{eq_smallnessregime}
		\|v\|_{\mathrm{L}^{\infty}(B_1)}\le1
		\quad\text{and}\quad
		\|\bar a\|_{\mathrm{L}^{\infty}(B_1)}
		+
		\|\bar f\|_{\mathrm{L}^{\infty}(B_1)}
		\le\varepsilon.
	\end{equation}
	Estimates proved for $v$ can then be transferred back to $u$, with constants
	depending explicitly on $K$ and $\rho$.
	
	We also use this scaling under pointwise growth assumptions. If, at the
	base point $x_0$, we have
	\[
	|f(x)|\le K_1|x-x_0|^{\theta_1},
	\quad\text{and}\quad
	|a(x)|\le K_2|x-x_0|^{\theta_2},
	\]
	then the rescaled coefficients satisfy
	\[
	|\bar f(x)|
	\le
	K_1
	\frac{\rho^{\bar p(x)+2+\theta_1}}
	{K^{\bar p(x)+1}}
	|x|^{\theta_1},
	\]
	and
	\[
	|\bar a(x)|
	\le
	K_2
	\rho^{\bar p(x)-\bar q(x)+2+\theta_2}
	K^{\bar q(x)-\bar p(x)-1}
	|x|^{\theta_2}.
	\]
	Thus, the same normalization procedure can be combined with the growth
	assumptions used in the pointwise estimates of the final sections.

	\section{H\"older compactness estimates}\label{regularidade secao}
	
	In this section, we prove estimates that are used in the
	approximation lemma of \Cref{sec_compactness}. The point is that, after subtracting affine functions in the
	improvement-of-flatness iteration, the equation naturally acquires a shifted
	gradient. Thus, for fixed $\xi\in\mathbb R^n$, we need to consider viscosity solutions of
	\begin{equation}\label{eq:shifted}
		|\nabla u+\xi|^{p(x)}F(\nabla^{2}u)
		+
		a(x)|\nabla u+\xi|^{q(x)}
		=
		f(x)
		\quad\text{in } B_1.
	\end{equation}
	
	Throughout this section, we assume
	%
%	\begin{comment}
%		we assume that $F:\mathcal S(n)\to\mathbb R$ is
%		uniformly $(\lambda,\Lambda)$-elliptic, $F(0)=0$, and that
%		\[
%		0\le q(x)\le p(x)\le p^{+}
%		\qquad\text{in }B_1.
%		\]
%		We also assume
%	\end{comment} 
	that $u$ is normalized, namely, that
	\[
	\|u\|_{\mathrm{L}^{\infty}(B_1)}\le 1.
	\]
	The constants below are allowed to depend on $n,\lambda,\Lambda,p^{+}$, and on the
	radius of the compact subdomain. If we are working under the smallness regime
	\[
	\|a\|_{\mathrm{L}^{\infty}(B_1)}+\|f\|_{\mathrm{L}^{\infty}(B_1)}\le 1;
	\]
	then, the constants are universal in the usual sense. Otherwise, their dependence
	on $\|a\|_\infty$ and $\|f\|_\infty$ are explicitly indicated.
	
	The proof is based on the standard doubling of variables argument of \Cref{prop:jensen-ishii}. We apply it twice: if $|\xi|$ is large, the vector
	$\xi+\nabla u$ stays away from zero, and the equation behaves as a uniformly
	elliptic one; if, on the other hand, $|\xi|$ is bounded, the equation may
	still degenerate, but a weaker penalization of the form $|x-y|^\beta$ yields a
	uniform H\"older estimate.
	
	\begin{lemma}[Lipschitz estimate for large-shift]\label{reg holder lipsc}
		Assume \ref{hipoteseF} -- \ref{hip2}, and let $u\in \mathrm{C}^{0}(B_1)$ be a normalized viscosity solution of \eqref{eq:shifted}.
		Fix $0<r<1/2$. There exists a constant
		\[
		L_0=L_0\bigl(n,\lambda,\Lambda,p^{+},r,\|a\|_\infty,\|f\|_\infty\bigr)>0
		\]
		such that, if $|\xi|\ge L_0$, then $u\in \mathrm{C}^{0,1}_{\mathrm{loc}}(B_1)$ and
		\[
		[u]_{\mathrm{C}^{0,1}(B_r(x_0))}
		\le
		C\bigl(n,\lambda,\Lambda,p^{+},r,\|a\|_\infty,\|f\|_\infty\bigr)
		\]
		for every $x_0\in B_{1/2}$ such that $B_r(x_0)\subset B_1$.
		
		In particular, under the smallness regime
		$\|a\|_\infty+\|f\|_\infty\le1$, both $L_0$ and $C$ depend only on
		$n,\lambda,\Lambda,p^{+}$ and $r$.
	\end{lemma}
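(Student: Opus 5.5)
We follow the doubling-of-variables strategy of Imbert--Silvestre, as anticipated before the statement, using \Cref{prop:jensen-ishii}. Fix $x_0\in B_{1/2}$ with $B_r(x_0)\subset B_1$. For constants $L_1,L_2>0$ to be chosen, we consider
\[
\Psi:=\sup_{x,y\in \overline{B_r(x_0)}}\Bigl( u(x)-u(y)-L_2\,\omega(|x-y|)-\tfrac{L_1}{2}|x-x_0|^2-\tfrac{L_1}{2}|y-x_0|^2\Bigr),
\]
where $\omega(t)=t-\tfrac12\omega_0 t^{3/2}$ for $t\le t_0$ is the usual concave modulus. The claim is that $\Psi\le0$ for suitable $L_1,L_2$; this yields $|u(x)-u(x_0)|\le L_2|x-x_0|$, and varying the base point gives the Lipschitz bound on $B_r(x_0)$, after shrinking the radius harmlessly. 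We argue by contradiction and assume $\Psi>0$ at a maximizing pair $(\bar x,\bar y)$. Since $\|u\|_\infty\le1$, choosing $L_1\sim 8/r^2$ forces $\bar x,\bar y$ into the interior of $B_{r/2}(x_0)$. In addition, $\bar x\ne\bar y$, and $L_2\omega(|\bar x-\bar y|)\le2$, so $|\bar x-\bar y|\lesssim 1/L_2$ is small.

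Next we apply \Cref{prop:jensen-ishii} with $\Phi(x,y)=L_2\omega(|x-y|)+\tfrac{L_1}{2}(|x-x_0|^2+|y-x_0|^2)$. We obtain $X,Y$ and test gradients $\zeta_x=L_2\omega'(\delta)e+L_1(\bar x-x_0)$ and $\zeta_y=L_2\omega'(\delta)e-L_1(\bar y-x_0)$, where $\delta=|\bar x-\bar y|$ and $e=(\bar x-\bar y)/\delta$. The key observation for the large-shift regime is that $|\zeta_x|,|\zeta_y|\le 2L_2+L_1$. Hence, if we choose $L_0\ge 2(2L_2+L_1)$, the shifted gradients satisfy $|\zeta+\xi|\ge |\xi|/2\ge L_0/2\ge1$. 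The degenerate factor is then bounded below, and dividing the viscosity inequalities by $|\zeta+\xi|^{p(\cdot)}$ gives
\[
F(X)\ge \frac{f(\bar x)-a(\bar x)|\zeta_x+\xi|^{q(\bar x)}}{|\zeta_x+\xi|^{p(\bar x)}},\qquad F(Y)\le \frac{f(\bar y)-a(\bar y)|\zeta_y+\xi|^{q(\bar y)}}{|\zeta_y+\xi|^{p(\bar y)}}.
\]
Because $q\le p$ and $|\zeta+\xi|\ge1$, both right-hand sides are bounded in absolute value by $\|f\|_\infty+\|a\|_\infty$. This is the point where \ref{hip2} enters, and it holds regardless of how $|\xi|$ compares with the gradients. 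Therefore
\[
F(X)-F(Y)\ge -2(\|a\|_\infty+\|f\|_\infty).
\]

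The matrix inequality \eqref{eq:jensen-ishii-matrix}, tested on vectors $(e,-e)$ and $(v,v)$ in the standard way, gives $X-Y\le 4L_1 I$ together with a large negative eigenvalue in direction $e$ of order $L_2|\omega''(\delta)|\sim L_2\omega_0\delta^{-1/2}$. Ellipticity \eqref{eq:full-pucci-ellipticity} then yields
\[
F(X)-F(Y)\le P^+(X-Y)\le -c\lambda L_2\omega_0\delta^{-1/2}+C\Lambda n L_1,
\]
which is at most $-c\lambda L_2+C(n,\Lambda)L_1$ since $\delta\le1$. Choosing $L_2$ large in terms of $n,\lambda,\Lambda,r,\|a\|_\infty,\|f\|_\infty$ contradicts the previous lower bound. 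Hence $\Psi\le0$. We then fix $L_0:=2(2L_2+L_1)$, which makes the dependence claimed in the statement explicit. Under the smallness regime $\|a\|_\infty+\|f\|_\infty\le1$, all constants depend only on $n,\lambda,\Lambda,p^+,r$.

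The main obstacle is the ordering of constant choices. The threshold $L_0$ has to be fixed after $L_2$, yet the test gradients depend on $L_2$. This is consistent only because the contradiction argument uses the bound $|\zeta|\le 2L_2+L_1$, which depends on $L_2$ but not on $u$'s unknown Lipschitz constant. Continuity of $p,q,a,f$ is not needed beyond boundedness, since we only use sup bounds at $\bar x,\bar y$ separately. One should also check the Jensen--Ishii step with a small $\varepsilon$, for instance $\varepsilon=1/(2\|A\|)$, so that the $\varepsilon A^2$ term does not destroy the negative eigenvalue estimate.
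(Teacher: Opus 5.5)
Your proposal is correct and is essentially the paper's proof. It uses the same doubling of variables with the concave modulus $t-\omega_0t^{3/2}$ and a quadratic localization (with the names $L_1,L_2$ swapped), the same key observation that the test gradients are bounded by a quantity depending only on the penalization constants, so that $|\xi|\ge L_0$ forces $|\zeta+\xi|\ge 1$ and hence $F(X)-F(Y)\ge -C(\|a\|_\infty+\|f\|_\infty)$, and the same contradiction with the large negative Pucci bound coming from $\omega''$. Your re-centering of the estimate at each base point (which also absorbs the extra term $\tfrac{L_1}{2}|x-x_0|^2\le \tfrac{L_1 r}{2}|x-x_0|$) and your explicit choice $\varepsilon=1/(2\|A\|)$ are, if anything, slightly more careful than the paper's last step, where the localization term is not literally bounded by $C|x-y|$.
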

	
	\begin{proof}
		Fix $x_0\in B_{1/2}$ and $0<r<1/2$ such that $B_r(x_0)\subset B_1$.
		We claim that, for suitable constants $L_1,L_2>0$,
		\begin{equation}\label{eq:Lipschitz-doubling-claim}
			\sup_{B_r(x_0)\times B_r(x_0)}
			\Big\{
			u(x)-u(y)-L_1v(|x-y|)
			-L_2\bigl(|x-x_0|^2+|y-x_0|^2\bigr)
			\Big\}
			\le 0,
		\end{equation}
		where, for $0\le s\le s_0$,
		\[
		v(s)=s-\omega_0s^{3/2}
		\]
		and $\omega_0>0$ is chosen so that
		\[
		s_0:=\left(\frac{2}{3\omega_0}\right)^2\ge 1.
		\]
		Since $|x-y|\le 2r<1$, only this first branch of $v$ is used. Notice that
		\[
		v'(s)=1-\frac{3\omega_0}{2} s^{1/2}>0
		\quad \text{and} \quad
		v''(s)=-\frac{3\omega_0}{4} s^{-1/2}\le -\frac{3\omega_0}{4}.
		\]
		First,
		we choose 
		\[
		L_2:=\left(\frac{8}{r}\right)^2
		\]
		and the constant $L_1$ is to be chosen later, depending on $L_2$ and on the data. We
		argue by contradiction and assume the supremum in
		\eqref{eq:Lipschitz-doubling-claim} is positive. Let $(\bar x,\bar y)$ be a point
		where this positive maximum is attained, and set
		\[
		\Phi(x,y):=
		L_1v(|x-y|)
		+
		L_2\bigl(|x-x_0|^2+|y-x_0|^2\bigr).
		\]
		Thus,
		\[
		u(\bar x)-u(\bar y)-\Phi(\bar x,\bar y)>0.
		\]
		Since $\|u\|_{\mathrm{L}^{\infty}(B_1)}\le1$, we have
		\[
		L_2\bigl(|\bar x-x_0|^2+|\bar y-x_0|^2\bigr)\le 2.
		\]
		With our choice of $L_2$, this gives
		\[
		|\bar x-x_0|+|\bar y-x_0|\le \frac r2.
		\]
		Hence $\bar x,\bar y\in B_r(x_0)$ and the maximum point is interior. Moreover
		$\bar x\neq \bar y$, since otherwise maximum would be zero.
		
		By \Cref{prop:jensen-ishii}, applied at this maximum point
		$(\bar x,\bar y)$ of
		$u(x)-u(y)-\Phi(x,y)$,
		there exist symmetric matrices $X,Y\in\mathcal S(n)$ such that the equation may
		be tested from above at $\bar x$ with the pair $\left(\nabla_x\Phi(\bar x,\bar y),X\right)$
		and from below at $\bar y$ with the pair $
		\left(-\nabla_y\Phi(\bar x,\bar y),Y\right).$ 
		Thus, setting
		\[
		\eta_{\bar x}:=\nabla_x\Phi(\bar x,\bar y)
		\quad\text{and}\quad
		\eta_{\bar y}:=-\nabla_y\Phi(\bar x,\bar y),
		\]
		we have
		\[
		\eta_{\bar x}
		=
		L_1v'(|\bar x-\bar y|)
		\frac{\bar x-\bar y}{|\bar x-\bar y|}
		+
		2L_2(\bar x-x_0),
		\]
		and
		\[
		\eta_{\bar y}
		=
		L_1v'(|\bar x-\bar y|)
		\frac{\bar x-\bar y}{|\bar x-\bar y|}
		-
		2L_2(\bar y-x_0).
		\]
		Since $u$ is a viscosity solution, this yields
		\[
		|\eta_{\bar x}+\xi|^{p(\bar x)}F(X)
		+
		a(\bar x)|\eta_{\bar x}+\xi|^{q(\bar x)}
		\ge f(\bar x),
		\]
		and
		\[
		|\eta_{\bar y}+\xi|^{p(\bar y)}F(Y)
		+
		a(\bar y)|\eta_{\bar y}+\xi|^{q(\bar y)}
		\le f(\bar y).
		\]
		In addition, we have the matrix inequality
		\[
		\begin{pmatrix}
			X & 0\\
			0 & -Y
		\end{pmatrix}
		\le
		\begin{pmatrix}
			Z & -Z\\
			-Z & Z
		\end{pmatrix}
		+
		(2L_2+\kappa)I,
		\]
		with $Z$ being the Hessian of the radial term
		\[
		Z
		:=
		L_1\left[
		\frac{v'(|\bar x-\bar y|)}{|\bar x-\bar y|}I
		+
		\left(
		v''(|\bar x-\bar y|)
		-
		\frac{v'(|\bar x-\bar y|)}{|\bar x-\bar y|}
		\right)
		\frac{(\bar x-\bar y)\otimes(\bar x-\bar y)}
		{|\bar x-\bar y|^2}
		\right],
		\] and
		where $\kappa>0$ can be taken arbitrarily small.
		Applying this to vectors of the form $(z,z)$ gives
		\[
		\langle (X-Y)z,z\rangle
		\le
		(4L_2+2\kappa)|z|^2.
		\]
		Thus, all eigenvalues of $X-Y$ are bounded above by $4L_2+2\kappa$. On the other hand, for the vector
		\[
		\left(
		\frac{\bar x-\bar y}{|\bar x-\bar y|},
		\frac{\bar y-\bar x}{|\bar x-\bar y|}
		\right)
		\]
		we obtain
		\[
		\left\langle
		(X-Y)\frac{\bar x-\bar y}{|\bar x-\bar y|},
		\frac{\bar x-\bar y}{|\bar x-\bar y|}
		\right\rangle
		\le
		4L_2+2\kappa+4L_1v''(|\bar x-\bar y|).
		\]
		Hence, at least one eigenvalue of $X-Y$ is bounded above by
		\[
		4L_2+2\kappa+4L_1v''(|\bar x-\bar y|).
		\]
		Using the definition of the Pucci extremal operator, we obtain
		\begin{equation}\label{eq:Pucci-large-shift}
			\mathcal P^+_{\lambda,\Lambda}(X-Y)
			\le
			2(\lambda+\Lambda(n-1))(2L_2+\kappa)
			+
			4\lambda L_1v''(|\bar x-\bar y|).
		\end{equation}
		Since $v''\le -3\omega_0/4$, choosing $\kappa\le1$ yields
		\begin{equation}\label{eq:Pucci-negative-large-shift}
			\mathcal P^+_{\lambda,\Lambda}(X-Y)
			\le
			2(\lambda+\Lambda(n-1))(2L_2+1)
			-
			3\lambda\omega_0L_1.
		\end{equation}
		Now, we use the viscosity inequalities. Since $u$ is a subsolution at $\bar x$
		and a supersolution at $\bar y$, we have
		\begin{equation}\label{eq:visc-large-shift}
			|\eta_{\bar x}+\xi|^{p(\bar x)}F(X)
			+
			a(\bar x)|\eta_{\bar x}+\xi|^{q(\bar x)}
			\ge f(\bar x),
		\end{equation}
		and
		\begin{equation}\label{eq:visc-large-shift-y}
			|\eta_{\bar y}+\xi|^{p(\bar y)}F(Y)
			+
			a(\bar y)|\eta_{\bar y}+\xi|^{q(\bar y)}
			\le f(\bar y).
		\end{equation}
		Moreover,
		\[
		|\eta_{\bar x}|+|\eta_{\bar y}|
		\le
		2L_1+4L_2.
		\]
		We now choose $L_0:=4(2L_1+4L_2+1)$.
		If $|\xi|\ge L_0$ and $e:=\xi/|\xi|$, then
		\[
		\left|e+\frac{\eta_{\bar x}}{|\xi|}\right|, \ 
		\left|e+\frac{\eta_{\bar y}}{|\xi|}\right|
		\in \left[\frac12,\frac32\right].
		\]
		Dividing \eqref{eq:visc-large-shift} and \eqref{eq:visc-large-shift-y} by the
		corresponding powers of $|\xi|$, and using $0\le q\le p$,
		\[
		F(X)
		\le
		C\bigl(\|f\|_\infty+\|a\|_\infty\bigr),
		\]
		from the first inequality, and
		\[
		F(Y)
		\ge
		-C\bigl(\|f\|_\infty+\|a\|_\infty\bigr),
		\]
		from the second one, where $C=C(p^{+})$.
		By uniform ellipticity,
		\[
		F(X)-F(Y)\le \mathcal P^+_{\lambda,\Lambda}(X-Y)
		\] and,
		consequently,
		\[
		-C\bigl(\|f\|_\infty+\|a\|_\infty\bigr)
		\le
		\mathcal P^+_{\lambda,\Lambda}(X-Y).
		\]
		Combining this with \eqref{eq:Pucci-negative-large-shift} yields
		\[
		3\lambda\omega_0L_1
		\le
		C\bigl(\|f\|_\infty+\|a\|_\infty\bigr)
		+
		2(\lambda+\Lambda(n-1))(2L_2+1).
		\]
		Thus, if $L_1$ is chosen so that
		\[
		L_1>
		\frac{
			C\bigl(\|f\|_\infty+\|a\|_\infty\bigr)
			+
			2(\lambda+\Lambda(n-1))(2L_2+1)
		}{3\lambda\omega_0},
		\]
		we reach a contradiction. Hence \eqref{eq:Lipschitz-doubling-claim} holds.
		
		Finally, since $v(s)\ge c s$ for $0\le s\le 2r$, the claim implies
		\[
		u(x)-u(y)\le C L_1|x-y|
		\]
		for all $x,y\in B_{r/2}(x_0)$. Changing the roles of $x$ and $y$ provides the desired Lipschitz
		estimate.
	\end{proof}

	The preceding lemma covers the so-called nondegenerate regime, where the large shift
	$\xi$ keeps the effective gradient away from zero. We now treat the complementary
	case, in which $|\xi|$ is bounded. In this regime, we do not expect the same
	Lipschitz control, but the same method still yields a
	uniform H\"older estimate.

	\begin{lemma}[H\"older estimate in the bounded-shift regime]\label{lem:holder-small-shift}
		Assume \ref{hipoteseF} -- \ref{hip2}, and let $u\in \mathrm{C}^{0}(B_1)$ be a normalized viscosity solution of \eqref{eq:shifted}.
		Fix $0<r<1/2$, and let $L_0$ be as in Lemma \ref{reg holder lipsc}. If
		$|\xi|\le L_0$, then there exist $\beta\in(0,1)$ and
		\[
		C=C\bigl(n,\lambda,\Lambda,p^{+},r,L_0,\|a\|_\infty,\|f\|_\infty\bigr)>0
		\]
		such that
		\[
		[u]_{\mathrm{C}^{0,\beta}(B_r(x_0))}\le C
		\]
		for every $x_0\in B_{1/2}$ such that $B_r(x_0)\subset B_1$.
		
		Under the smallness regime
		$\|a\|_\infty+\|f\|_\infty\le1$, the constants depend only on
		$n,\lambda,\Lambda,p^{+}$ and $r$.
	\end{lemma}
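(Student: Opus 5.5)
We use the same doubling of variables scheme as in \Cref{reg holder lipsc}, with the radial penalization $L_1|x-y|$ replaced by the weaker $L_1|x-y|^\beta$. Fix $\beta\in(0,1)$, which we may take to be $\beta=1/2$. The claim is that
\[
\sup_{B_r(x_0)\times B_r(x_0)}\Big\{u(x)-u(y)-L_1|x-y|^\beta-L_2\bigl(|x-x_0|^2+|y-x_0|^2\bigr)\Big\}\le0,
\]
with $L_2=(8/r)^2$ and $L_1$ large. Arguing by contradiction, let $(\bar x,\bar y)$ be a point where a positive maximum is attained. Exactly as before, the choice of $L_2$ makes $(\bar x,\bar y)$ interior, and $\bar x\neq\bar y$. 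Positivity and $\|u\|_\infty\le1$ give $L_1|\bar x-\bar y|^\beta\le2$, so $\delta:=|\bar x-\bar y|\le(2/L_1)^{1/\beta}$ is small when $L_1$ is large.

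Next we apply \Cref{prop:jensen-ishii}. Setting $\varphi(s)=L_1s^\beta$, the test gradients are
\[
\eta_{\bar x},\ \eta_{\bar y}=L_1\beta\delta^{\beta-1}e+O(L_2),\qquad e=(\bar x-\bar y)/\delta .
\]
The key gain comes from the magnitude: $|\eta|\approx\beta L_1\delta^{\beta-1}\ge\beta L_1^{1/\beta}2^{(\beta-1)/\beta}$, which is huge when $L_1$ is large. Since $|\xi|\le L_0$ is bounded, choosing $L_1$ so that $\beta L_1\delta^{\beta-1}\ge 4(L_0+4L_2+1)$ guarantees $|\eta_{\bar x}+\xi|,|\eta_{\bar y}+\xi|\ge \tfrac12\beta L_1\delta^{\beta-1}\ge1$. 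Here the bounded-shift hypothesis is exactly what is used.

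Dividing the viscosity inequalities by $|\eta+\xi|^{p}$ and using $0\le q\le p$ together with $|\eta+\xi|\ge1$, we get
\[
F(X)\ge -(\|f\|_\infty+\|a\|_\infty)\quad\text{and}\quad F(Y)\le \|f\|_\infty+\|a\|_\infty .
\]
Note that the orientation here differs from the printed one in \Cref{reg holder lipsc}. The subsolution test at $\bar x$ gives a lower bound on $F(X)$, and the supersolution test at $\bar y$ gives an upper bound on $F(Y)$. Hence
\[
-2(\|f\|_\infty+\|a\|_\infty)\le F(X)-F(Y)\le\mathcal P^+_{\lambda,\Lambda}(X-Y).
\]

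For the matrix inequality we test \eqref{eq:jensen-ishii-matrix} with $(z,z)$ and with $(e,-e)$, as before. All eigenvalues of $X-Y$ are then bounded by $4L_2+2\kappa$. In the direction $e$ we get $4L_2+2\kappa+4\varphi''(\delta)$, where $\varphi''(\delta)=L_1\beta(\beta-1)\delta^{\beta-2}$. This yields
\[
\mathcal P^+_{\lambda,\Lambda}(X-Y)\le C(n,\Lambda)(L_2+1)-4\lambda\beta(1-\beta)L_1\delta^{\beta-2}.
\]
Since $\delta\le 1$, the negative term is at most $-4\lambda\beta(1-\beta)L_1$. Choosing $L_1$ large in terms of $n,\lambda,\Lambda,L_2,L_0,\|a\|_\infty,\|f\|_\infty$ then contradicts the lower bound. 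Evaluating the claim at $x,y\in B_{r/2}(x_0)$ with the penalization, as in the proof of \Cref{reg holder lipsc}, gives $[u]_{C^{0,\beta}(B_{r/2}(x_0))}\le C$, and a covering argument yields the statement for $B_r$.

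The main obstacle is the step where the effective gradient is shown to stay away from zero. The $\varepsilon$-term of Jensen--Ishii must be handled carefully: we take $\varepsilon$ comparable to $1/\|A\|$, so that $A+\varepsilon A^2\le A+\kappa$ only along the tested directions, with errors absorbed into $\varphi''$. We must also check that the Pucci bound uses the $(e,-e)$ direction correctly even though $\|Z\|\sim L_1\delta^{\beta-2}$ is large. The decisive feature is that both the degeneracy cancellation, via $|\eta+\xi|\gtrsim L_1\delta^{\beta-1}$, and the concavity gain $L_1\delta^{\beta-2}$ grow as $\delta\to0$, so no lower bound on $\delta$ is needed.
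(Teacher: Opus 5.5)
Your proposal is correct and follows the paper's proof essentially step by step. It uses the same penalization $L_1|x-y|^\beta+L_2(|x-x_0|^2+|y-x_0|^2)$ with $L_2=(8/r)^2$, the same localization, the same lower bound $|\eta+\xi|\ge1$ obtained from $\beta L_1|\bar x-\bar y|^{\beta-1}\to\infty$ together with $|\xi|\le L_0$, and the same Pucci concavity estimate in the direction $(\bar x-\bar y)/|\bar x-\bar y|$.

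Two small points in your favour. Your orientation $F(X)\ge-(\|f\|_\infty+\|a\|_\infty)$, $F(Y)\le\|f\|_\infty+\|a\|_\infty$ is the right one; the paper writes these intermediate inequalities reversed, although the combined bound on $F(X)-F(Y)$ it then uses is correct. Closing the contradiction via $|\bar x-\bar y|^{\beta-2}\ge1$, instead of $L_1|\bar x-\bar y|^{\beta-2}\ge cL_1^{2/\beta}$, is an equally valid and simpler shortcut.
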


	\begin{proof}
		Fix $\beta\in(0,1)$ to be chosen below. We prove that, for $L_1$ large enough,
		\begin{equation}\label{eq:holder-doubling-claim}
			\sup_{B_r(x_0)\times B_r(x_0)}
			\Big\{
			u(x)-u(y)-L_1|x-y|^\beta
			-L_2\bigl(|x-x_0|^2+|y-x_0|^2\bigr)
			\Big\}
			\le 0,
		\end{equation}
		where, as before, $L_2$ is the same as in the previous proof.
		Assume, by contradiction, that the supremum in \eqref{eq:holder-doubling-claim}
		is positive and let $(\bar x,\bar y)$ be an interior maximum point. We have $\bar x\ne\bar y$ and the same
		localization argument as in Lemma \ref{reg holder lipsc} gives
		\[
		|\bar x-x_0|+|\bar y-x_0|\le \frac r2.
		\]
		
		Applying \Cref{prop:jensen-ishii} with
		\[
		\Phi(x,y)
		=
		L_1|x-y|^\beta
		+
		L_2\bigl(|x-x_0|^2+|y-x_0|^2\bigr),
		\]
		at the maximum point $(\bar x,\bar y)$ of $
		u(x)-u(y)-\Phi(x,y)$,
		we obtain symmetric matrices $X,Y\in\mathcal S(n)$ such that the equation may be
		tested from above at $\bar x$ with first-order part $
		\eta_{\bar x}:=\nabla_x\Phi(\bar x,\bar y)$,
		and from below at $\bar y$ with first-order part $
		\eta_{\bar y}:=-\nabla_y\Phi(\bar x,\bar y)$.
		Since $\bar x\neq \bar y$, these vectors are explicitly given by
		\[
		\eta_{\bar x}
		=
		L_1\beta|\bar x-\bar y|^{\beta-2}(\bar x-\bar y)
		+
		2L_2(\bar x-x_0),
		\]
		and
		\[
		\eta_{\bar y}
		=
		L_1\beta|\bar x-\bar y|^{\beta-2}(\bar x-\bar y)
		-
		2L_2(\bar y-x_0).
		\]
		Moreover, the matrix inequality yields
		\begin{equation}\label{eq:Pucci-small-shift}
			\mathcal P^+_{\lambda,\Lambda}(X-Y)
			\le
			C_0L_2
			-
			c_0L_1|\bar x-\bar y|^{\beta-2},
		\end{equation}
		where $C_0=C_0(n,\lambda,\Lambda)$ and $
		c_0=4\lambda\beta(1-\beta)>0$.
		At the maximum point,
		\[
		L_1|\bar x-\bar y|^\beta\le 2,
		\]
		and hence $|\bar x-\bar y|\to0$ as $L_1\to\infty$. On the other hand,
		\[
		|\eta_{\bar x}|
		\ge
		L_1\beta|\bar x-\bar y|^{\beta-1}-2L_2r,
		\]
		and similarly for $\eta_{\bar y}$. Since $\beta-1<0$, the first term becomes large
		when $|\bar x-\bar y|$ is small. Hence, choosing $L_1$ large enough depending on
		$r,L_2,L_0$ and $\beta$, we ensure
		\begin{equation}\label{eq:jet-lower-bound}
			|\eta_{\bar x}+\xi|\ge 1
			\quad\text{and}\quad
			|\eta_{\bar y}+\xi|\ge 1.
		\end{equation}
		
		Using the viscosity inequalities as in the previous lemma, we get
		\[
		|\eta_{\bar x}+\xi|^{p(\bar x)}F(X)
		+
		a(\bar x)|\eta_{\bar x}+\xi|^{q(\bar x)}
		\ge f(\bar x),
		\]
		and
		\[
		|\eta_{\bar y}+\xi|^{p(\bar y)}F(Y)
		+
		a(\bar y)|\eta_{\bar y}+\xi|^{q(\bar y)}
		\le f(\bar y).
		\]
		Because $|\eta_{\bar x}+\xi|,|\eta_{\bar y}+\xi|\ge1$ and $0\le q\le p$, we obtain
		\[
		F(X)\le C\bigl(\|f\|_\infty+\|a\|_\infty\bigr),
		\]
		and
		\[
		F(Y)\ge -C\bigl(\|f\|_\infty+\|a\|_\infty\bigr),
		\]
		with $C=C(p^{+})$. Therefore,
		\[
		-C\bigl(\|f\|_\infty+\|a\|_\infty\bigr)
		\le
		F(X)-F(Y)
		\le
		\mathcal P^+_{\lambda,\Lambda}(X-Y).
		\]
		Using \eqref{eq:Pucci-small-shift}, we find
		\[
		c_0L_1|\bar x-\bar y|^{\beta-2}
		\le
		C_0L_2
		+
		C\bigl(\|f\|_\infty+\|a\|_\infty\bigr).
		\]
		But, since $|\bar x-\bar y|\le (2/L_1)^{1/\beta}$, we have
		\[
		L_1|\bar x-\bar y|^{\beta-2}
		=
		L_1|\bar x-\bar y|^\beta|\bar x-\bar y|^{-2}
		\ge
		cL_1^{2/\beta}.
		\]
		Thus, the left-hand side becomes arbitrarily large as $L_1\to\infty$, while the
		right-hand side is fixed. This contradiction proves \eqref{eq:holder-doubling-claim}.
		
		Consequently,
		\[
		u(x)-u(y)\le L_1|x-y|^\beta
		\]
		for $x,y\in B_{r/2}(x_0)$. Exchanging $x$ and $y$ gives the desired
		$\mathrm{C}^{0,\beta}$ estimate.
	\end{proof}

	The two lemmas above exhaust all possible sizes of the shift $\xi$. Therefore,
	after possibly lowering the exponent, we obtain a H\"older estimate that is uniform
	with respect to $\xi$, which provides the compactness needed in the next section.

	\begin{corollary}[Uniform H\"older estimates]\label{cor:holder-compactness}
		Assume \ref{hipoteseF} -- \ref{hip2}, and let $u\in \mathrm{C}^{0}(B_1)$ be a normalized viscosity solution of \eqref{eq:shifted}.
		Assume further that
		\[
		\|a\|_{\mathrm{L}^{\infty}(B_1)}+\|f\|_{\mathrm{L}^{\infty}(B_1)}\le 1.
		\]
		Then, there exist $\beta\in(0,1)$ and $C>0$, depending only on
		$n,\lambda,\Lambda,p^{+}$, such that
		\[
		\|u\|_{\mathrm{C}^{0,\beta}(B_{1/2})}\le C.
		\]
	\end{corollary}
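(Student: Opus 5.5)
The corollary follows by combining \Cref{reg holder lipsc} and \Cref{lem:holder-small-shift} in the smallness regime, with one fixed radius and a covering argument.

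\emph{Choice of radius and threshold.} Fix $r=1/4$. Then $B_r(x_0)\subset B_1$ for every $x_0\in B_{1/2}$, so both lemmas apply at every such center. Under $\|a\|_\infty+\|f\|_\infty\le 1$, \Cref{reg holder lipsc} yields a threshold $L_0$ depending only on $n,\lambda,\Lambda,p^{+}$. This is the same $L_0$ that enters \Cref{lem:holder-small-shift}. Consequently the exponent $\beta_1\in(0,1)$ produced by that lemma, and both constants, depend only on $n,\lambda,\Lambda,p^{+}$. In its proof, $\beta$ may be fixed, say $\beta_1=1/2$, and $L_1$ is then chosen in terms of $r,L_2,L_0,\beta$ and the data.

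\emph{Case split on the shift.} Set $\beta:=\beta_1$. If $|\xi|\ge L_0$, \Cref{reg holder lipsc} gives $[u]_{C^{0,1}(B_{r}(x_0))}\le C$. Since $\operatorname{diam}B_r(x_0)<1$, we have $|x-y|\le|x-y|^{\beta}$ there, so this also gives $[u]_{C^{0,\beta}(B_r(x_0))}\le C$. If $|\xi|\le L_0$, \Cref{lem:holder-small-shift} gives the same bound directly. In both cases we obtain a uniform local estimate
\[
|u(x)-u(y)|\le C|x-y|^{\beta},\qquad x,y\in B_{r/2}(x_0),
\]
for all $x_0\in B_{1/2}$. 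This uses the form in which the proofs actually conclude, namely on the half ball.

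\emph{Globalizing to $B_{1/2}$.} Take $x,y\in B_{1/2}$. If $|x-y|<r/2$, apply the local estimate with $x_0=x$. If $|x-y|\ge r/2$, use normalization:
\[
|u(x)-u(y)|\le 2\le 2(2/r)^{\beta}|x-y|^{\beta}.
\]
Adding $\|u\|_{L^\infty}\le 1$ gives $\|u\|_{C^{0,\beta}(B_{1/2})}\le C(n,\lambda,\Lambda,p^{+})$.

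\emph{Main obstacle.} The key point is checking that constants are independent of $\xi$ and of the data. In the bounded-shift lemma, the lower bound $|\eta+\xi|\ge1$ requires $L_1$ large depending on $L_0$. Because $L_0$ is itself universal under smallness, this causes no circularity. The exponent $\beta$ does not depend on $\xi$, so no further lowering of $\beta$ is needed beyond using $\min\{1,\beta_1\}=\beta_1$.
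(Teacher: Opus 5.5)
Your proof is correct and follows the same route as the paper. The paper also splits according to whether $|\xi|\ge L_0$ or $|\xi|<L_0$, applies \Cref{reg holder lipsc} or \Cref{lem:holder-small-shift} accordingly, and takes the smaller exponent and the larger constant. The extra details you supply are exactly what the paper's two-line proof leaves implicit: fixing $r=1/4$, turning the Lipschitz bound into a $\beta$-H\"older bound on sets of diameter less than one, and the covering step from the half-ball estimates on $B_{r/2}(x_0)$ to all of $B_{1/2}$.
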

	
	\begin{proof}
		If $|\xi|\ge L_0$, the conclusion follows from \Cref{reg holder lipsc}.
		If $|\xi|<L_0$, it follows from Lemma \ref{lem:holder-small-shift}. Taking the
		smaller of the two exponents and the larger of the two constants gives the
		uniform estimate.
	\end{proof}

	\section{Approximation lemma}\label{sec_compactness}
	
	The purpose of this section is to convert the uniform H\"older estimates
	obtained above into a tangential approximation result. Roughly speaking, if the
	source term and the coefficient of the Hamiltonian perturbation are sufficiently
	small; then, normalized solutions of the shifted equation are uniformly close, in
	smaller balls, to solutions of homogeneous uniformly elliptic equations.
	
	The main point is the following stability statement. Along a convergent sequence
	of equations, the terms involving $a_j$ and $f_j$ vanish. Whenever the effective
	gradient does not vanish, one may divide by the factor
	$|\nabla u_j+\xi_j|^{p_j(x)}$ and pass directly to the limit. The only delicate
	case occurs at contact points where the limiting effective gradient vanishes. In
	that case, a small perturbation of the test function in a positive eigenspace of
	the Hessian restores a nonzero effective gradient and allows for the same conclusion.
	
	\begin{proposition}[Stability]\label{propHolde}
		Let $\{u_j\}_{j\in\mathbb N}\subset \mathrm{C}^{0}(B_1)\cap \mathrm{L}^{\infty}(B_1)$ be a sequence
		of viscosity solutions of
		\begin{equation}\label{eq:stability-sequence}
			|\nabla u_j+\xi_j|^{p_j(x)}F_j(\nabla^{2}u_j)
			+
			a_j(x)|\nabla u_j+\xi_j|^{q_j(x)}
			=
			f_j(x)
			\quad\text{in }B_1,
		\end{equation}
		where $\xi_j\in\mathbb R^n$. Suppose that:
		\begin{enumerate}[label=$(\roman*)$]
			\item $F_j:\mathcal S(n)\to\mathbb R$ is uniformly
			$(\lambda,\Lambda)$-elliptic and $F_j(0)=0$ for every $j$;
			
			\item $F_j\to \mathcal F$ locally uniformly in
			$\mathcal S(n)$, where $\mathcal F$ is uniformly
			$(\lambda,\Lambda)$-elliptic;
			
			\item
			$
			\|f_j\|_{\mathrm{L}^{\infty}(B_1)}
			+
			\|a_j\|_{\mathrm{L}^{\infty}(B_1)}
			\le 1/j$;
			
			\item there exists $c_0>0$ such that, for every $x\in B_1$,
			\[
			0\le q_j(x)\le p_j(x)\le c_0;
			\]
			
			\item $u_j\to u$ locally uniformly in $B_1$.
		\end{enumerate}
		Then, $u$ is a viscosity solution of
		\begin{equation}\label{OperatorProp}
			\mathcal F(\nabla^{2}u)=0
			\quad\text{in }B_{3/4}.
		\end{equation}
	\end{proposition}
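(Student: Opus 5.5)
The plan is to verify the supersolution property of $u$ for \eqref{OperatorProp} in $B_{3/4}$; the subsolution property follows by the symmetric argument, or by applying the supersolution case to $-u_j$ with $\tilde F_j(M)=-F_j(-M)$, $-a_j$, $-f_j$, $-\xi_j$. By the standard reduction for uniformly elliptic equations without $x$- or $u$-dependence, it suffices to test with quadratic polynomials
\[
P(x)=u(x_0)+b\cdot(x-x_0)+\tfrac12\langle M(x-x_0),x-x_0\rangle
\]
touching $u$ from below at $x_0\in B_{3/4}$. Replacing $P$ by $P-\delta|x-x_0|^4$ and letting $\delta\to0$ at the end, we may assume the contact is strict. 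We argue by contradiction and assume $\mathcal F(M)>0$.

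The first step is to transfer the contact to $u_j$. Strict contact and local uniform convergence give points $x_j\to x_0$ and constants $c_j$ such that $P+c_j$ touches $u_j$ from below at $x_j$. The supersolution inequality then reads
\[
|b+M(x_j-x_0)+\xi_j|^{p_j(x_j)}F_j(M)+a_j(x_j)\,|b+M(x_j-x_0)+\xi_j|^{q_j(x_j)}\le f_j(x_j).
\]
Passing to a subsequence, either $|\xi_j|\to\infty$ or $\xi_j\to\xi\in\mathbb R^n$.

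In the first case, and also in the second case when $b+\xi\neq0$, the effective gradient $\zeta_j:=b+M(x_j-x_0)+\xi_j$ satisfies $|\zeta_j|\ge c>0$ for large $j$. Divide the inequality by $|\zeta_j|^{p_j(x_j)}$. Since $0\le q_j\le p_j\le c_0$, the factors $|\zeta_j|^{q_j-p_j}$ and $|\zeta_j|^{-p_j}$ are bounded by a constant depending only on $c$ and $c_0$. This gives $F_j(M)\le C/j$, and passing to the limit with (ii) yields $\mathcal F(M)\le0$, a contradiction.

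The main obstacle is the degenerate case $\xi_j\to\xi$ with $b+\xi=0$. Here the effective gradient at the contact point may vanish, and the inequality then gives no information. I would follow the Imbert--Silvestre perturbation argument. Since $\mathcal F(M)>0=\mathcal F(0)$ and $\mathcal F$ is uniformly elliptic, $M$ has at least one positive eigenvalue. Let $E$ be the span of the eigenvectors of $M$ with positive eigenvalues, and $\Pi_E$ the orthogonal projection onto $E$.

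First attempt: for small $e\in E$, perturb the test function to $P_e(x)=P(x)+e\cdot(x-x_0)$. Then $P_e$, suitably lowered, touches $u_j$ from below at some $y_j^e$ near $x_0$, with effective gradient $\zeta_j^e=\xi_j+b+e+M(y_j^e-x_0)$. For those $j$ and $e$ with $\zeta_j^e\neq0$, the division argument above applies and gives $F_j(M)\le o(1)$. It therefore remains to rule out that, for all small $e$ and large $j$, the effective gradient $\zeta_j^e$ vanishes. If it always vanished, then $M(y_j^e-x_0)=-(\xi_j+b)-e$, so the projections $\Pi_E(y_j^e-x_0)$ would move through $E$ as $e$ ranges over a small ball. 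Every contact point would then lie on a set over which $u_j$ is bounded below by a family of quadratics strictly convex in the $E$-directions.

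I would try to turn this into a contradiction with the strict minimality of $u-P$ at $x_0$. The natural route is to use the convexity of $M|_E$ to show that $u_j-P$ restricted to $x_0+E$ near $x_0$ is bounded below by a cone-type function in $e$. This is incompatible, in the limit, with $u-P$ having a strict minimum at $x_0$ while the contact points sweep a neighbourhood in $E$.

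An alternative, if this becomes messy, is to replace $P$ by $P+\eta|\Pi_E(x-x_0)|$ with small $\eta>0$. At any contact point, this function either is non-differentiable, in which case contact from below is impossible, or has a gradient whose $E$-component has size at least $\eta$ minus a small error. That keeps $|\zeta_j|\ge\eta/2$ while changing the Hessian only by a nonnegative term in $E$. Ellipticity then gives $\mathcal F(M)\le \mathcal F(M+\text{that term})\le0$, the desired contradiction. I expect to commit to this second device, since it closes the degenerate case directly.
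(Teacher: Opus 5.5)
Your final route (reduce to quadratic test functions, split according to the behaviour of $\xi_j$, and in the critical case $b+\xi=0$ add $\eta|\Pi_E(x-x_0)|$ and drop the nonnegative Hessian term by ellipticity) is the Imbert--Silvestre device the paper uses. Your non-degenerate cases and the reduction of the subsolution property to the supersolution one are essentially fine, and the unfinished first attempt is not needed.

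\textbf{Corner contact points.} The gap is at contact points $y_j$ with $\Pi_E(y_j-x_0)=0$. You dismiss them by saying that contact from below at a non-differentiable point is impossible, but this is not automatic. The function $u_j$ is only continuous; its differentiability is exactly what the paper is trying to establish. A function with a convex corner can touch a continuous function from below at the corner: take $u_j$ equal to $P+\eta|\Pi_E(x-x_0)|$ near $y_j$. The paper treats this as a separate subcase, and you need it too. For every unit $e\in E$ one has $e\cdot\Pi_E z\le|\Pi_E z|$, with equality when $\Pi_E z=0$. Hence the smooth function $P+\eta\,e\cdot\Pi_E(x-x_0)$ lies below $P+\eta|\Pi_E(x-x_0)|$, coincides with it at $y_j$, and therefore also touches $u_j$ from below at $y_j$. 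Its gradient is $b+M(y_j-x_0)+\eta e$ and its Hessian is $M$, so the division argument goes through. Corner contacts can in fact be excluded for large $j$, by testing with $P+\frac{\eta}{2}\,e\cdot\Pi_E(x-x_0)+\frac{C}{2}\bigl(e\cdot(x-y_j)\bigr)^2$ and letting $C\to\infty$. But that uses the equation and must be argued; it is not a geometric triviality.

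\textbf{The gradient lower bound.} Your claim that the effective gradient has $E$-component at least $\eta$ ``minus a small error'' also needs a reason you did not give. The term $M(y_j-x_0)$ is not small compared with $\eta$, because the minimum points of $u_j-P-\eta|\Pi_E(\cdot-x_0)|$ need not be $o(\eta)$-close to $x_0$. For example, if $u-P=|x-x_0|^2$ near $x_0$, the minimum sits at distance $\eta/2$. What saves the estimate is that $E$ and $E^\perp$ are $M$-invariant and $M>0$ on $E$. With $\nu_j=\Pi_E(y_j-x_0)/|\Pi_E(y_j-x_0)|$ one has $\langle M(y_j-x_0),\nu_j\rangle\ge0$. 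In the corner case, $M(y_j-x_0)\in E^\perp$ is orthogonal to $e$. Hence the component of the effective gradient along $\nu_j$ (resp.\ $e$) is at least $\eta-|\xi_j+b|\ge\eta/2$ for large $j$, wherever $y_j$ lies.

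This argument uses that the test function is an exact quadratic. So obtain strict contact by replacing $M$ with $M-2\delta I$, which keeps $\mathcal F>0$ for small $\delta$. Subtracting $\delta|x-x_0|^4$ instead adds a gradient term with a negative component along $\nu_j$.

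With these two repairs your second device gives a complete proof. The paper's write-up also skips the sign observation: it asserts $x_j^\gamma\to x_0$ for fixed $\gamma$, which fails in general.
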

	
	\begin{proof}
		We prove first that $u$ is a viscosity supersolution of
		\eqref{OperatorProp}. Let $\varphi\in C^2(B_{3/4})$ touch $u$ from below at
		$x_0\in B_{3/4}$. We need to prove
		\[
		\mathcal F(\nabla^{2}\varphi(x_0))\le0.
		\]
		We may assume that the contact is
		strict in a small ball $B_\rho(x_0)\subset\subset B_{3/4}$. Namely,
		\[
		u(x_0)=\varphi(x_0),
		\quad\text{and}\quad
		u(x)>\varphi(x),
		\quad\text{for }x\in \overline{B_\rho(x_0)}\setminus\{x_0\}.
		\]
		Since $u_j\to u$ uniformly in $\overline{B_\rho(x_0)}$, there exist points
		$x_j\in B_\rho(x_0)$, with $x_j\to x_0$, such that $u_j-\varphi$
		has a local minimum at $x_j$. Hence, since $u_j$ is a viscosity solution,
		\begin{equation}\label{eq:visc-stability-main}
			|\nabla\varphi(x_j)+\xi_j|^{p_j(x_j)}
			F_j(\nabla^{2}\varphi(x_j))
			+
			a_j(x_j)|\nabla\varphi(x_j)+\xi_j|^{q_j(x_j)}
			\le
			f_j(x_j).
		\end{equation}
		
		Now, we distinguish two cases.
		
		\medskip
		
		\noindent\textit{Case 1:} the sequence $\{\xi_j\}$ is unbounded.
		Passing to a subsequence, we may assume $|\xi_j|\to+\infty$. Since
		$\nabla\varphi(x_j)$ is bounded, then, for $j$ large enough, we have
		\[
		|\nabla\varphi(x_j)+\xi_j|\ge \frac12|\xi_j|.
		\]
		Dividing \eqref{eq:visc-stability-main} by
		$|\nabla\varphi(x_j)+\xi_j|^{p_j(x_j)}$, we obtain
		\[
		F_j(\nabla^{2}\varphi(x_j))
		+
		\frac{a_j(x_j)}
		{|\nabla\varphi(x_j)+\xi_j|^{p_j(x_j)-q_j(x_j)}}
		\le
		\frac{f_j(x_j)}
		{|\nabla\varphi(x_j)+\xi_j|^{p_j(x_j)}}.
		\]
		Because $0\le q_j\le p_j\le c_0$ and
		$\|a_j\|_\infty+\|f_j\|_\infty\to0$, both terms involving $a_j$ and $f_j$
		converge to zero. Using the local uniform convergence $F_j\to\mathcal F$ and
		the convergence $\nabla^{2}\varphi(x_j)\to \nabla^{2}\varphi(x_0)$, we conclude
		\[
		\mathcal F(\nabla^{2}\varphi(x_0))\le0.
		\]
		
		\medskip
		
		\noindent\textit{Case 2:} the sequence $\{\xi_j\}$ is bounded.
		Passing to a subsequence, we may assume
		$\xi_j\to \xi_\infty$.
		Set
		\[
		b:=\nabla\varphi(x_0)
		\quad\text{and}\quad
		M:=\nabla^{2}\varphi(x_0).
		\]
		If $|b+\xi_\infty|>0$, then
		\[
		|\nabla\varphi(x_j)+\xi_j|\ge c>0
		\]
		for all sufficiently large $j$. Dividing \eqref{eq:visc-stability-main} by
		$|\nabla\varphi(x_j)+\xi_j|^{p_j(x_j)}$ gives
		\[
		F_j(\nabla^{2}\varphi(x_j))
		+
		\frac{a_j(x_j)}
		{|\nabla\varphi(x_j)+\xi_j|^{p_j(x_j)-q_j(x_j)}}
		\le
		\frac{f_j(x_j)}
		{|\nabla\varphi(x_j)+\xi_j|^{p_j(x_j)}}.
		\]
		Since $0\le p_j-q_j\le c_0$, the denominators are bounded away from zero in
		a uniform way. Letting $j\to\infty$, we get
		\[
		\mathcal F(M)\le0.
		\]
		It remains to consider the critical case
		$b+\xi_\infty=0$.
		We prove the desired inequality by contradiction. Assume
		$\mathcal F(M)>0$.
		Since $\mathcal F(0)=0$ and $\mathcal F$ is degenerate elliptic, this implies
		that $M$ has at least one positive eigenvalue: indeed, if $M\le0$, then
		\[
		0=\mathcal F(0)\ge \mathcal F(M),
		\]
		contradicting $\mathcal F(M)>0$.
		Next, let $E\subset\mathbb R^n$ be the subspace spanned by all eigenvectors of
		$M$ associated with positive eigenvalues, and let $P_E$ denote the
		orthogonal projection onto $E$. We fix $\gamma>0$ and consider the
		perturbed function
		\[
		\varphi_\gamma(x):=\varphi(x)+\gamma |P_E(x-x_0)|.
		\]
		Although $\varphi_\gamma$ is not smooth on the set
		$\{P_E(x-x_0)=0\}$, it is continuous. Since $\varphi$ touches $u$ strictly
		from below at $x_0$ and $\gamma>0$ is fixed sufficiently small, the functions $u_j-\varphi_\gamma$
		have local minimum points $x_j^\gamma\in B_\rho(x_0)$, with
		$x_j^\gamma\to x_0$.

		Now, in subcases $(a)$ and $(b)$ below, we distinguish whether the contact points lie on a differentiability point of
		$|P_E(x-x_0)|$ or not.
		
		\smallskip
		
		$(a)$ For infinitely many $j$, we have $P_E(x_j^\gamma-x_0)=0$.
		Passing to a subsequence, if necessary, we assume this holds for every $j$. Choose any
		$e\in E$ with $|e|=1$, and define the smooth function
		\[
		\psi_{\gamma,e}(x)
		:=
		\varphi(x)+\gamma \, P_E(x-x_0) \cdot e.
		\]
		Since $P_E(x-x_0) \cdot e\le |P_E(x-x_0)|$,
		for every $x$, and equality holds at $x=x_j^\gamma$, the function
		$\psi_{\gamma,e}$ also touches $u_j$ from below at $x_j^\gamma$, up to an additive constant. Then,
		\[
		|\nabla\psi_{\gamma,e}(x_j^\gamma)+\xi_j|^{p_j(x_j^\gamma)}
		F_j(\nabla^{2}\psi_{\gamma,e}(x_j^\gamma))
		+
		a_j(x_j^\gamma)
		|\nabla\psi_{\gamma,e}(x_j^\gamma)+\xi_j|^{q_j(x_j^\gamma)}
		\le
		f_j(x_j^\gamma).
		\]
		But
		\[
		\nabla\psi_{\gamma,e}(x_j^\gamma)
		=
		\nabla\varphi(x_j^\gamma)+\gamma e
		\quad\text{and}\quad
		\nabla^{2}\psi_{\gamma,e}(x_j^\gamma)
		=
		\nabla^{2}\varphi(x_j^\gamma),
		\] and
		since
		\[
		\nabla\varphi(x_j^\gamma)+\xi_j\to b+\xi_\infty=0,
		\]
		we have, for all sufficiently large $j$,
		\[
		|\nabla\psi_{\gamma,e}(x_j^\gamma)+\xi_j|
		\ge \frac{\gamma}{2}.
		\]
		Dividing by
		$|\nabla\psi_{\gamma,e}(x_j^\gamma)+\xi_j|^{p_j(x_j^\gamma)}$, we obtain
		\[
		F_j(\nabla^{2}\varphi(x_j^\gamma))
		+
		\frac{a_j(x_j^\gamma)}
		{|\nabla\psi_{\gamma,e}(x_j^\gamma)+\xi_j|^{p_j(x_j^\gamma)-q_j(x_j^\gamma)}}
		\le
		\frac{f_j(x_j^\gamma)}
		{|\nabla\psi_{\gamma,e}(x_j^\gamma)+\xi_j|^{p_j(x_j^\gamma)}}.
		\]
		The terms involving $a_j$ and $f_j$ converge to zero, because by construction
		\[
		\frac{1}
		{|\nabla\psi_{\gamma,e}(x_j^\gamma)+\xi_j|^{p_j(x_j^\gamma)-q_j(x_j^\gamma)}}
		\le
		\left(\frac2\gamma\right)^{c_0},
		\]
		and similarly for the right-hand side. Letting $j\to\infty$, we conclude
		\[
		\mathcal F(M)\le0,
		\] which
		contradicts the assumption $\mathcal F(M)>0$.
		
		\smallskip
		
		$(b)$ For infinitely many $j$, we have $P_E(x_j^\gamma-x_0)\neq0$.
		Up to passing to a subsequence, we assume this holds for every $j$. Then
		$\varphi_\gamma$ is smooth near $x_j^\gamma$ and may be used directly as a test
		function. Set
		\[
		\nu_j^\gamma
		:=
		\frac{P_E(x_j^\gamma-x_0)}
		{|P_E(x_j^\gamma-x_0)|}.
		\]
		At $x_j^\gamma$ we have
		\[
		\nabla\varphi_\gamma(x_j^\gamma)
		=
		\nabla\varphi(x_j^\gamma)+\gamma\nu_j^\gamma
		\quad
		\text{and}
		\quad 
		\nabla^{2}\varphi_\gamma(x_j^\gamma)
		=
		\nabla^{2}\varphi(x_j^\gamma)+\gamma B_j,
		\]
		where
		\[
		B_j
		:=
		\frac{1}{|P_E(x_j^\gamma-x_0)|}
		\left(P_E-\nu_j^\gamma\otimes\nu_j^\gamma\right).
		\]
		Notice that $B_j\ge0$. Then, by ellipticity,
		\[
		F_j(\nabla^{2}\varphi(x_j^\gamma))
		\le
		F_j(\nabla^{2}\varphi(x_j^\gamma)+\gamma B_j).
		\]
		The viscosity supersolution inequality gives
		\[
		|\nabla\varphi_\gamma(x_j^\gamma)+\xi_j|^{p_j(x_j^\gamma)}
		F_j(\nabla^{2}\varphi(x_j^\gamma)+\gamma B_j)
		+
		a_j(x_j^\gamma)
		|\nabla\varphi_\gamma(x_j^\gamma)+\xi_j|^{q_j(x_j^\gamma)}
		\le
		f_j(x_j^\gamma)
		\] and since,
		moreover,
		\[
		\nabla\varphi(x_j^\gamma)+\xi_j\to0,
		\quad\text{and}
		\quad
		|\nu_j^\gamma|=1,
		\]
		we obtain, for $j$ large enough,
		\[
		|\nabla\varphi_\gamma(x_j^\gamma)+\xi_j|
		\ge \frac{\gamma}{2}.
		\]
		Dividing by
		$|\nabla\varphi_\gamma(x_j^\gamma)+\xi_j|^{p_j(x_j^\gamma)}$, we have
		\[
		F_j(\nabla^{2}\varphi(x_j^\gamma)+\gamma B_j)
		+
		\frac{a_j(x_j^\gamma)}
		{|\nabla\varphi_\gamma(x_j^\gamma)+\xi_j|^{p_j(x_j^\gamma)-q_j(x_j^\gamma)}}
		\le
		\frac{f_j(x_j^\gamma)}
		{|\nabla\varphi_\gamma(x_j^\gamma)+\xi_j|^{p_j(x_j^\gamma)}}.
		\]
		Using the monotonicity estimate above, given by the ellipticity, this implies
		\[
		F_j(\nabla^{2}\varphi(x_j^\gamma))
		+
		\frac{a_j(x_j^\gamma)}
		{|\nabla\varphi_\gamma(x_j^\gamma)+\xi_j|^{p_j(x_j^\gamma)-q_j(x_j^\gamma)}}
		\le
		\frac{f_j(x_j^\gamma)}
		{|\nabla\varphi_\gamma(x_j^\gamma)+\xi_j|^{p_j(x_j^\gamma)}}.
		\]
		As in the previous subcase, the terms involving $a_j$ and $f_j$ converge to
		zero. Passing to the limit, we obtain $\mathcal F(M)\le0$,
		again contradicting $\mathcal F(M)>0$.
		
		The critical case is therefore impossible unless $\mathcal F(M)\le0$
		and it follows that $u$ is a viscosity supersolution of \eqref{OperatorProp}.
		
		\smallskip
		
		It remains to prove that $u$ is a viscosity subsolution of
		\eqref{OperatorProp}. But this follows by applying the previous argument to
		$\widetilde u_{j}:=-u_j$ and $\widetilde F_j(M):=-F_j(-M)$.  
%		Indeed, $v_j\to -u$ locally uniformly, and the corresponding
%		operators
%		\[
%		\widetilde F_j(M):=-F_j(-M)
%		\]
%		are uniformly $(\lambda,\Lambda)$-elliptic, satisfy
%		$\widetilde F_j(0)=0$, and converge locally uniformly to
%		\[
%		\widetilde{\mathcal F}(M):=-\mathcal F(-M).
%		\]
%		The shifted gradients are transformed by replacing $\xi_j$ with $-\xi_j$.
%		The same proof shows that $-u$ is a viscosity supersolution of
%		\[
%		\widetilde{\mathcal F}(\nabla^{2}(-u))=0,
%		\]
%		which is equivalent to saying that $u$ is a viscosity subsolution of
%		\[
%		\mathcal F(\nabla^{2}u)=0.
%		\]
		Therefore, $u$ is a viscosity solution of \eqref{OperatorProp}.
	\end{proof}
	
	We can now prove the approximation lemma. It says that, in the smallness regime,
	normalized solutions of the shifted equation are close to solutions of homogeneous uniformly
	elliptic problems. This is the compactness input for the improvement-of-flatness
	iteration.
	
	\begin{lemma}[Approximation lemma]\label{lem2-4}
		Assume \ref{hipoteseF} -- \ref{hip2}, and let $u\in \mathrm{C}^{0}(B_1)$ be a normalized viscosity solution of \eqref{eq:shifted}, where $\xi\in\mathbb R^n$ is arbitrary. Given $\varepsilon>0$, there exists $
		\delta=\delta(n,\lambda,\Lambda,p^{+},\varepsilon)>0$
		such that, if
		\[
		\|a\|_{\mathrm{L}^{\infty}(B_1)}+\|f\|_{\mathrm{L}^{\infty}(B_1)}\le\delta;
		\]
		then, there exist a uniformly $(\lambda,\Lambda)$-elliptic operator
		$\mathcal F:\mathcal S(n)\to\mathbb R$, with $\mathcal F(0)=0$, and a
		viscosity solution $h$ of $\mathcal F(\nabla^{2}h)=0$ in $B_{3/4}$,
		such that
		\[
		\|u-h\|_{\mathrm{L}^{\infty}(B_{1/2})}\le \varepsilon.
		\]
		Moreover, for a constant $C$ that depends only on $n,\lambda,\Lambda$, we have
		\[
		\|h\|_{\mathrm{C}^{1,\alpha_0}(B_{1/2})}\le C.
		\]
	\end{lemma}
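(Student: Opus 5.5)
We argue by contradiction, combining the uniform Hölder estimate of \Cref{cor:holder-compactness} with the stability result \Cref{propHolde}.

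\emph{Setup.} Suppose the lemma fails for some $\varepsilon_0>0$. Then for every $j\in\mathbb N$ there exist:
\begin{itemize}
\item a uniformly $(\lambda,\Lambda)$-elliptic operator $F_j$ with $F_j(0)=0$;
\item exponents $0\le q_j\le p_j\le p^+$;
\item data with $\|a_j\|_\infty+\|f_j\|_\infty\le 1/j$;
\item a vector $\xi_j\in\mathbb R^n$;
\item a normalized viscosity solution $u_j$ of the shifted equation \eqref{eq:shifted} with these data.
\end{itemize}
These satisfy $\|u_j-h\|_{\mathrm{L}^\infty(B_{1/2})}>\varepsilon_0$ for every $h$ solving some homogeneous uniformly elliptic equation $\mathcal F(\nabla^2 h)=0$ in $B_{3/4}$ with $\mathcal F(0)=0$.

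\emph{Compactness.} Since $1/j\le 1$, \Cref{cor:holder-compactness} applies. One point needs care: the corollary is stated on $B_{1/2}$, but we need equicontinuity on a slightly larger set. The proofs of \Cref{reg holder lipsc} and \Cref{lem:holder-small-shift} are local, with constants depending only on $r$. Covering $\overline{B_{3/4}}$ (or any compact subset of $B_1$) by small balls therefore gives a uniform $\mathrm{C}^{0,\beta}$ bound, independent of $\xi_j$, on compact subsets of $B_1$. Together with $\|u_j\|_\infty\le1$, Arzelà--Ascoli and a diagonal argument give a subsequence with $u_j\to u_\infty$ locally uniformly in $B_1$.

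Next, uniformly elliptic operators with $F_j(0)=0$ satisfy $|F_j(M)-F_j(N)|\le n\Lambda\|M-N\|$. They are therefore equi-Lipschitz and locally bounded, and Arzelà--Ascoli again gives $F_j\to\mathcal F$ locally uniformly. The limit $\mathcal F$ inherits the Pucci inequalities \eqref{eq:full-pucci-ellipticity} and $\mathcal F(0)=0$, so it is uniformly $(\lambda,\Lambda)$-elliptic.

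\emph{Passage to the limit.} All hypotheses $(i)$–$(v)$ of \Cref{propHolde} now hold with $c_0=p^+$. Hence $h:=u_\infty$ is a viscosity solution of $\mathcal F(\nabla^2 h)=0$ in $B_{3/4}$. Locally uniform convergence gives $\|u_j-h\|_{\mathrm{L}^\infty(B_{1/2})}\to0$, which contradicts the choice of $\varepsilon_0$ for large $j$. The resulting $\delta$ depends only on $n,\lambda,\Lambda,p^+,\varepsilon$, because every constant used is universal.

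\emph{The $\mathrm{C}^{1,\alpha_0}$ bound.} We have $\|h\|_{\mathrm{L}^\infty(B_{3/4})}\le1$ as a uniform limit of normalized functions. The interior estimate of \cite{Caffa-Cabre} holds with constants depending only on $n,\lambda,\Lambda$, uniformly over this class of operators. Applying it on $B_{3/4}$ and rescaling to pass from $B_{3/4}$ to $B_{1/2}$ yields $\|h\|_{\mathrm{C}^{1,\alpha_0}(B_{1/2})}\le C(n,\lambda,\Lambda)$.

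The main obstacle is ensuring that compactness is truly uniform in the unbounded parameter $\xi_j$. This is handled by the dichotomy between \Cref{reg holder lipsc} and \Cref{lem:holder-small-shift}, with $L_0$ universal under the smallness regime, and by \Cref{propHolde}, which already treats the case $|\xi_j|\to\infty$.
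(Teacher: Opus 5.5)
Your proposal is correct and follows the paper's proof essentially step for step: contradiction, H\"older compactness uniform in $\xi_j$, Arzel\`a--Ascoli for both $u_j$ and $F_j$, the stability proposition, and then the Caffarelli--Cabr\'e estimate. Your covering argument for equicontinuity on compact subsets of $B_1$ also supplies a detail the paper skips when it asserts a $\mathrm{C}^{0,\beta}(B_{3/4})$ bound from a corollary stated only on $B_{1/2}$.

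One small point to tidy, which the paper shares: $\|h\|_{\mathrm{L}^{\infty}(B_{3/4})}\le 1$ holds for the limit $u_\infty$, but not automatically for an arbitrary $h$ obtained through the contradiction argument. You should therefore build this bound into the class of profiles being negated. This costs nothing, since $u_\infty$ belongs to that class.
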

	
	\begin{proof}
		We argue by contradiction. Then, there exist $\varepsilon_0>0$, sequences of functions $\{a_j\},\{p_j\}$, $\{q_j\}$, $\{f_j\}$, $\{u_j\}$, $\{F_j\}$ and of vectors $\{\xi_j\}$ satisfying 
		\begin{enumerate}[label = $(\roman*)$]
			\item $F_j$ is uniformly $(\lambda,\Lambda)$-elliptic;
			\smallskip
			\item $\|f_j\|_{\mathrm{L}^{\infty}(B_1)}+ \|a_j\|_{\mathrm{L}^{\infty}(B_1)}\leq \frac{1}{j}$ and $f_j,a_j\in \mathrm{C}^{0}(B_1)$;
			\smallskip
			\item $0\leq q_j(x)\leq p_j(x)$, $p_j(x)\leq \sup_{x\in B_1}p(x)$ and $p_j,q_j\in \mathrm{C}^{0}(B_1)$;
			\smallskip
			\item $u_j\in \mathrm{C}^{0}(B_1)$ and $\|u_j\|_{\mathrm{L}^{\infty}(B_1)}\leq 1$,
		\end{enumerate}
		and such that
		\begin{equation}
			\label{2-12}
			|\nabla u_j+\xi_j|^{p_j(x)}F_j(\nabla^2u_j)+a_j(x)|\nabla u_j+\xi_j|^{q_j(x)}=f_j(x)  \quad \text{in } B_1,
		\end{equation}
		but
		\begin{equation}
			\|u_j-h\|_{\mathrm{L}^{\infty}(B_{1/2})}>\varepsilon_0,
			\label{aprox-F-har}
		\end{equation}
		for all $h$ satisfying $\mathcal F(\nabla^2h)=0$, for any $(\lambda,\Lambda)$-elliptic operator $\mathcal F$ with $\mathcal F(0)=0$.

		By the H\"older compactness estimate from the previous section, there exist
		$\beta\in(0,1)$ and $C>0$, independent of $j$ and of $\xi_j$, such that
		\[
		\|u_j\|_{\mathrm{C}^{0,\beta}(B_{3/4})}\le C.
		\]
		Hence, by Arzel\`a--Ascoli, up to a subsequence,
		\[
		u_j\to \overline u,
		\quad\text{locally uniformly in }B_{3/4}.
		\]
		On the other hand, since the operators $F_j$ are uniformly elliptic and
		normalized by $F_j(0)=0$, they are locally equicontinuous in $\mathcal S(n)$.
		Thus, again up to a subsequence,
		\[
		F_j\to F_\infty,
		\quad\text{locally uniformly in }\mathcal S(n),
		\]
		where $F_\infty$ is uniformly $(\lambda,\Lambda)$-elliptic and
		$F_\infty(0)=0$.
		By \Cref{propHolde}, we have, in the viscosity sense,
		\[
		F_\infty(\nabla^{2}\overline u)=0
		\quad\text{in } B_{3/4}.
		\]
		Hence, $\overline u$ is an admissible homogeneous
		profile in \eqref{aprox-F-har}. Taking $h=\overline u$, we arrive at
		\[
		\|u_j-\overline u\|_{\mathrm{L}^{\infty}(B_{1/2})}>\varepsilon_0,
		\quad\text{for every }j,
		\]
		which contradicts the uniform convergence $u_j\to\overline u$ in
		$B_{1/2}$.
		
		Finally, by the universal $\mathrm{C}^{1,\alpha_0}$ regularity estimate for viscosity
		solutions of homogeneous uniformly elliptic equations,
		\[
		\|h\|_{\mathrm{C}^{1,\alpha_0}(B_{1/2})}
		\le
		C\|h\|_{\mathrm{L}^{\infty}(B_{3/4})}
		\le C,
		\]
		where $C$ depends only on $n,\lambda,\Lambda$. Therefore, the approximating profile
		has the claimed regularity estimate.
	\end{proof}

	\section{H\"older continuity of the gradient}\label{sec_c1alpha}
	
	In this section, we prove \Cref{teo_c1alpha}. Our argument adapts an
	improvement-of-flatness iteration. The approximation lemma, \Cref{lem2-4}, from the previous
	section, allows us to compare normalized solutions, at each scale, with solutions
	of homogeneous uniformly elliptic equations. Since those limiting profiles are
	universally $\mathrm{C}^{1,\alpha_0}$, each comparison improves the affine approximation
	of the original solution. Iterating this procedure yields uniform
	$\mathrm{C}^{1,\alpha}$ expansions at interior points.
	
	We first prove a one-step improvement lemma in the normalized regime.
	
	\begin{lemma}[One-step improvement of flatness]\label{lem:one-step-flatness}
		Assume \ref{hipoteseF} -- \ref{hip2}, and let $\alpha$ be as in \eqref{eqn:alpha-range}.
		Then, there exist constants $\rho\in(0,1/2)$ and $\delta=\delta(n,\lambda,\Lambda,p^{+},\alpha)>0$
		such that the following holds: if $u\in \mathrm{C}^{0}(B_1)$ is a viscosity solution of \eqref{eq:shifted} such that
		\[
		\|u\|_{\mathrm{L}^{\infty}(B_1)}\le1
		\quad \text{and}\quad 
		\|a\|_{\mathrm{L}^{\infty}(B_1)}+\|f\|_{\mathrm{L}^{\infty}(B_1)}\le\delta;
		\]
		then, there exists an affine function $\ell(x)=a_0+b_0\cdot x$
		such that
		\[
		\sup_{B_\rho}|u-\ell|\le \rho^{1+\alpha}
		\]
		and $|a_0|+|b_0|\le C$,
		where $C$ depends only on $n,\lambda,\Lambda$.
	\end{lemma}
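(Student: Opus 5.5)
The plan is the standard compactness argument: approximate $u$ by a homogeneous profile using \Cref{lem2-4}, then let the first-order Taylor polynomial of that profile serve as the affine function $\ell$.

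\textbf{Step 1: choice of $\rho$.} Let $C_*$ be the universal constant in the $\mathrm{C}^{1,\alpha_0}$ estimate of \Cref{lem2-4}. Since $\alpha<\alpha_0$, we may pick $\rho\in(0,1/2)$ so small that
\[
C_*\rho^{1+\alpha_0}\le \tfrac12\rho^{1+\alpha},
\qquad\text{that is,}\qquad
\rho^{\alpha_0-\alpha}\le \frac{1}{2C_*}.
\]
This $\rho$ depends only on $n,\lambda,\Lambda,\alpha$. This is the only point where the restriction $\alpha<\alpha_0$ is used. The constraint $\alpha\le 1/(1+p^+)$ plays no role in a single step; it enters only later, when the iteration is rescaled and the smallness of $a$ and $f$ must survive the scaling of \Cref{subsec_small}.

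\textbf{Step 2: choice of $\delta$.} Set $\varepsilon:=\tfrac12\rho^{1+\alpha}$ and let $\delta=\delta(n,\lambda,\Lambda,p^+,\varepsilon)$ be the constant given by \Cref{lem2-4}. This is admissible because $\varepsilon$ depends only on the allowed quantities. The key point is that \Cref{lem2-4} is uniform in the shift $\xi$, a uniformity inherited from \Cref{cor:holder-compactness}. So no condition on $\xi$ is needed, which is essential for iterating with accumulated slopes.

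\textbf{Step 3: construction of $\ell$.} Under the hypotheses, \Cref{lem2-4} gives a profile $h$ solving $\mathcal F(\nabla^2 h)=0$ in $B_{3/4}$, with
\[
\|u-h\|_{\mathrm{L}^\infty(B_{1/2})}\le\varepsilon
\qquad\text{and}\qquad
\|h\|_{\mathrm{C}^{1,\alpha_0}(B_{1/2})}\le C_*.
\]
Define
\[
\ell(x):=h(0)+\nabla h(0)\cdot x,
\]
so that $a_0=h(0)$, $b_0=\nabla h(0)$, and $|a_0|+|b_0|\le C_*$, a constant depending only on $n,\lambda,\Lambda$.

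\textbf{Step 4: the estimate on $B_\rho$.} By Taylor's theorem with the $\mathrm{C}^{1,\alpha_0}$ bound,
\[
\sup_{B_\rho}|h-\ell|\le C_*\rho^{1+\alpha_0}\le\tfrac12\rho^{1+\alpha}.
\]
Combining this with the approximation bound,
\[
\sup_{B_\rho}|u-\ell|\le\sup_{B_\rho}|u-h|+\sup_{B_\rho}|h-\ell|\le \tfrac12\rho^{1+\alpha}+\tfrac12\rho^{1+\alpha}=\rho^{1+\alpha}.
\]

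\textbf{Main subtlety.} The one thing to watch is the order of the choices: $\rho$ must be fixed first, from $C_*$, $\alpha$ and $\alpha_0$ alone, and only then $\varepsilon$ and $\delta$. Otherwise $\delta$ would implicitly depend on $h$. Since $C_*$ in \Cref{lem2-4} is independent of $\varepsilon$, this ordering is legitimate. The remaining steps are routine.
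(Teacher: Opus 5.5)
Your proposal is correct and follows essentially the same route as the paper's proof. Both approximate $u$ by a homogeneous profile $h$ via the approximation lemma, take $\ell(x)=h(0)+\nabla h(0)\cdot x$, fix $\rho$ from $C\rho^{\alpha_0-\alpha}\le 1/2$, and then set $\varepsilon=\tfrac12\rho^{1+\alpha}$, which determines $\delta$; the bound $|a_0|+|b_0|\le C$ comes from the $\mathrm{C}^{1,\alpha_0}$ estimate on $h$.
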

	
	\begin{proof}
		Let $\varepsilon>0$ to be chosen later. By \Cref{lem2-4}, provided
		$\delta>0$ is small enough, there exist a uniformly $(\lambda,\Lambda)$-elliptic
		operator $\mathcal F$, with $\mathcal F(0)=0$, and a viscosity solution $h$ of
		\[
		\mathcal F(\nabla^2h)=0
		\quad\text{in }B_{3/4},
		\]
		such that
		\[
		\|u-h\|_{\mathrm{L}^{\infty}(B_{1/2})}\le \varepsilon.
		\]
		Since $h$ solves a homogeneous uniformly elliptic equation, the universal
		$\mathrm{C}^{1,\alpha_0}$ estimate gives
		\[
		\|h\|_{\mathrm{C}^{1,\alpha_0}(B_{1/2})}\le C,
		\]
		with $C=C(n,\lambda,\Lambda)$. In particular, $|h(0)|+|\nabla h(0)|\le C$
		and
		\[
		\sup_{B_\rho}
		\left|
		h(x)-h(0)-\nabla h(0)\cdot x
		\right|
		\le
		C\rho^{1+\alpha_0}.
		\]
		Define
		\[
		\ell(x):=h(0)+\nabla h(0)\cdot x.
		\]
		Then
		\[
		\sup_{B_\rho}|u-\ell|
		\le
		\sup_{B_\rho}|u-h|
		+
		\sup_{B_\rho}|h-\ell|
		\le
		\varepsilon+C\rho^{1+\alpha_0}.
		\]
		We choose $\rho\in(0,1/2)$ so small that $C\rho^{\alpha_0-\alpha}\le 1/2$,
		and next choose $\varepsilon:=\rho^{1+\alpha}/2$.
		Hence,
		\[
		\sup_{B_\rho}|u-\ell|
		\le
		\frac12\rho^{1+\alpha}
		+
		C\rho^{1+\alpha_0}
		\le
		\rho^{1+\alpha}.
		\]
		The bound $|a_0|+|b_0|\le C$ follows from the estimate
		$|h(0)|+|\nabla h(0)|\le C$.
	\end{proof}
	
	We now iterate the preceding improvement. The next proposition is the normalized
	version of the main estimate.
	
	\begin{proposition}[Normalized affine approximation]\label{prop:normalized-c1alpha}
		Assume \ref{hipoteseF} -- \ref{hip2}, let $\alpha$ be as in \eqref{eqn:alpha-range}, and
		let $u\in \mathrm{C}^{0}(B_1)$ is a viscosity solution of \eqref{eq:shifted},
		where $\xi\in\mathbb R^n$ is arbitrary. Assume further that
		\[
		\|u\|_{\mathrm{L}^{\infty}(B_1)}\le1
		\quad \text{and} \quad
		\|a\|_{\mathrm{L}^{\infty}(B_1)}+\|f\|_{\mathrm{L}^{\infty}(B_1)}\le\delta,
		\]
		where $\delta$ is the constant from \Cref{lem:one-step-flatness}. Then, there
		exists an affine function
		\[
		\ell_\infty(x)=a_\infty+b_\infty\cdot x
		\]
		such that, for every $0<r<1/2$,
		\begin{equation}\label{eq:normalized-expansion}
			\sup_{B_r}|u-\ell_\infty|
			\le
			C r^{1+\alpha},
		\end{equation}
		where $C$ depends only on $n,\lambda,\Lambda,p^{+}$ and $\alpha$.
	\end{proposition}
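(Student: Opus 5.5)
We iterate \Cref{lem:one-step-flatness} at dyadic-type scales $\rho^k$. We will construct affine functions $\ell_k(x)=a_k+b_k\cdot x$, with $\ell_0\equiv0$, such that
\[
\sup_{B_{\rho^k}}|u-\ell_k|\le \rho^{k(1+\alpha)},
\qquad
|a_{k+1}-a_k|\le C\rho^{k(1+\alpha)},
\qquad
|b_{k+1}-b_k|\le C\rho^{k\alpha}.
\]
The case $k=0$ is the normalization $\|u\|_{\mathrm{L}^\infty(B_1)}\le1$. For the inductive step, assume the bound holds at step $k$ and define
\[
v_k(x):=\frac{(u-\ell_k)(\rho^k x)}{\rho^{k(1+\alpha)}},\qquad x\in B_1,
\]
so that $\|v_k\|_{\mathrm{L}^\infty(B_1)}\le1$. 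Using the scaling of \Cref{subsec_small}, $v_k$ solves an equation of the form \eqref{eq:shifted} with:
\begin{itemize}
\item the rescaled operator $F_k(M)=\rho^{k(1-\alpha)}F(\rho^{-k(1-\alpha)}M)$, which is still $(\lambda,\Lambda)$-elliptic;
\item the new shift $\xi_k=\rho^{-k\alpha}(\xi+b_k)$, which comes from subtracting $\ell_k$;
\item the exponents $p(\rho^kx)$ and $q(\rho^kx)$;
\item the rescaled data $f_k(x)=\rho^{k(1-\alpha)-k\alpha p(\rho^k x)}f(\rho^kx)$ and $a_k(x)=\rho^{k(1-\alpha)-k\alpha(p-q)(\rho^kx)}a(\rho^kx)$.
\end{itemize}
The shift being arbitrary is exactly why \Cref{lem:one-step-flatness} was stated for \eqref{eq:shifted} with a general $\xi$.

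The key point, and the step I expect to need the most care, is that the rescaled data stay within the smallness threshold $\delta$. The exponent in $f_k$ is $k[1-\alpha(1+p(\rho^kx))]\ge k[1-\alpha(1+p^+)]\ge0$, since $\alpha\le 1/(1+p^+)$. Because $\rho<1$, this gives $|f_k|\le|f|$. For $a_k$ the exponent is $k[1-\alpha-\alpha(p-q)]\ge k[1-\alpha(1+p^+)]\ge 0$, since $0\le p-q\le p\le p^+$, so likewise $|a_k|\le |a|$. Hence $\|a_k\|_\infty+\|f_k\|_\infty\le\delta$. This is precisely where the restriction $\alpha\le 1/(1+p^+)$ enters.

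Applying \Cref{lem:one-step-flatness} to $v_k$ gives $\tilde\ell(x)=\tilde a+\tilde b\cdot x$ with $|\tilde a|+|\tilde b|\le C$ and $\sup_{B_\rho}|v_k-\tilde\ell|\le\rho^{1+\alpha}$. We then set
\[
\ell_{k+1}(x):=\ell_k(x)+\rho^{k(1+\alpha)}\,\tilde\ell(\rho^{-k}x),
\]
that is, $a_{k+1}=a_k+\rho^{k(1+\alpha)}\tilde a$ and $b_{k+1}=b_k+\rho^{k\alpha}\tilde b$. Scaling back gives $\sup_{B_{\rho^{k+1}}}|u-\ell_{k+1}|\le\rho^{(k+1)(1+\alpha)}$, together with the coefficient bounds, which completes the induction.

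It remains to pass to the limit. The increments form geometric series, so $a_k\to a_\infty$ and $b_k\to b_\infty$ with
\[
|a_k-a_\infty|\le C\rho^{k(1+\alpha)}/(1-\rho^{1+\alpha}),\qquad
|b_k-b_\infty|\le C\rho^{k\alpha}/(1-\rho^\alpha).
\]
Given $0<r<1/2$, choose $k$ with $\rho^{k+1}<r\le\rho^k$. Then
\[
\sup_{B_r}|u-\ell_\infty|\le \sup_{B_{\rho^k}}|u-\ell_k|+|a_k-a_\infty|+\rho^k|b_k-b_\infty|\le C\rho^{k(1+\alpha)}\le C\rho^{-(1+\alpha)}r^{1+\alpha}.
\]
This is \eqref{eq:normalized-expansion}, with $C$ depending on $\rho$, $\alpha$ and the constant of \Cref{lem:one-step-flatness}, hence only on $n,\lambda,\Lambda,p^+,\alpha$.
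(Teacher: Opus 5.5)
Your proposal is correct and follows the paper's proof essentially step for step. It uses the same inductive construction of $\ell_k$ through the rescalings $v_k$ with shift $\xi_k=\rho^{-k\alpha}(\xi+b_k)$, the same use of $\alpha\le 1/(1+p^{+})$ to keep the rescaled data $f_k,a_k$ below $\delta$, and the same geometric-series passage to $\ell_\infty$ with the choice $\rho^{k+1}<r\le\rho^k$.
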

	
	\begin{proof}
		Let $\rho\in(0,1/2)$ be the constant given by
		\Cref{lem:one-step-flatness}. For $j=0,1,2,\ldots$, we want to construct affine functions
		\[
		\ell_j(x)=a_j+b_j\cdot x,
		\]
		with $a_0=0$ and $b_0=0$,
		such that, for every $j\ge0$,
		\begin{equation}\label{eq:induction-flatness}
			\sup_{B_{\rho^j}}|u-\ell_j|
			\le
			\rho^{j(1+\alpha)},
		\end{equation}
		and, for every $j\ge0$,
		\begin{equation}\label{eq:coeff-a-increment}
			|a_{j+1}-a_j|
			\le
			C\rho^{j(1+\alpha)},
		\end{equation}
		\begin{equation}\label{eq:coeff-b-increment}
			|b_{j+1}-b_j|
			\le
			C\rho^{j\alpha}.
		\end{equation}
		
		The case $j=0$ in \eqref{eq:induction-flatness} follows from
		$\|u\|_{\mathrm{L}^{\infty}(B_1)}\le1$.
		By induction, assume that $\ell_1$, $\ell_2$, \dots, $\ell_k$ have been constructed and satisfy \eqref{eq:induction-flatness}, \eqref{eq:coeff-a-increment}, and
		\eqref{eq:induction-flatness}. Define the auxiliary function
		\[
		u_k(x)
		:=
		\frac{u(\rho^k x)-\ell_k(\rho^k x)}
		{\rho^{k(1+\alpha)}}.
		\]
		Then, $\|u_k\|_{\mathrm{L}^{\infty}(B_1)}\le 1$ and, moreover, 
		we have
		\[
		\nabla u(\rho^kx)
		=
		b_k+\rho^{k\alpha}\nabla u_k(x).
		\]
		It follows that
		\[
		\nabla u(\rho^kx)+\xi
		=
		\rho^{k\alpha}
		\left(
		\nabla u_k(x)+\xi_k
		\right),
		\quad \text{ where }
		\xi_k:=\frac{\xi+b_k}{\rho^{k\alpha}}.
		\]
		Also,
		\[
		\nabla^2u(\rho^kx)
		=
		\rho^{k(\alpha-1)}\nabla^2u_k(x).
		\]
		Plugging these identities into \eqref{eq:shifted}, we find that $u_k$
		solves
		\begin{equation}\label{eq:rescaled-k}
			|\nabla u_k+\xi_k|^{p_k(x)}
			F_k(\nabla^2u_k)
			+
			A_k(x)|\nabla u_k+\xi_k|^{q_k(x)}
			=
			f_k(x)
			\quad\text{in }B_1,
		\end{equation}
		where $p_k(x):=p(\rho^kx)$, $q_k(x):=q(\rho^kx)$, 
		\[
		F_k(M):=\rho^{k(1-\alpha)}
		F(\rho^{k(\alpha-1)}M),
		\quad
		f_k(x):=
		\rho^{k(1-\alpha(1+p_k(x)))}f(\rho^kx),
		\]
		\[
		\text{and} \quad
		A_k(x):=
		\rho^{k[1-\alpha(p_k(x)-q_k(x)+1)]}a(\rho^kx).
		\]
		Moreover, the operator $F_k$ is uniformly $(\lambda,\Lambda)$-elliptic and satisfies $F_k(0)=0$.
		Furthermore, $0\le q_k(x)\le p_k(x)\le p^{+}$ in $B_1$. Since, by \eqref{eqn:alpha-range}, $1-\alpha(1+p_k(x))\ge0$, we obtain
		\[
		1-\alpha(p_k(x)-q_k(x)+1)\ge0.
		\]
		It follows that
		\[
		\|A_k\|_{\mathrm{L}^{\infty}(B_1)}
		+
		\|f_k\|_{\mathrm{L}^{\infty}(B_1)}
		\le
		\|a\|_{\mathrm{L}^{\infty}(B_1)}
		+
		\|f\|_{\mathrm{L}^{\infty}(B_1)}
		\le
		\delta.
		\]
		Thus, $u_k$ satisfies the hypotheses of \Cref{lem:one-step-flatness} and
		there exists an affine function
		\[
		\bar\ell_k(x)=\bar a_k+\bar b_k\cdot x,
		\] with $|\bar a_k|+|\bar b_k|\le C$ and 
		such that
		\[
		\sup_{B_\rho}|u_k-\bar\ell_k|
		\le
		\rho^{1+\alpha}.
		\]
		We set
		\[
		\ell_{k+1}(x)
		:=
		\ell_k(x)
		+
		\rho^{k(1+\alpha)}\bar a_k
		+
		\rho^{k\alpha}\bar b_k\cdot x.
		\]
		Equivalently,
		\[
		a_{k+1}=a_k+\rho^{k(1+\alpha)}\bar a_k
		\quad \text{and} \quad
		b_{k+1}=b_k+\rho^{k\alpha}\bar b_k.
		\]
		The bounds \eqref{eq:coeff-a-increment} and \eqref{eq:coeff-b-increment}
		follow immediately from $|\bar a_k|+|\bar b_k|\le C$.
		
		Finally, if $x\in B_{\rho^{k+1}}$, then $x=\rho^k z$ with $z\in B_\rho$.
		Therefore,
		\[
		|u(x)-\ell_{k+1}(x)|
		=
		\rho^{k(1+\alpha)}
		|u_k(z)-\bar\ell_k(z)|
		\le
		\rho^{k(1+\alpha)}\rho^{1+\alpha}
		=
		\rho^{(k+1)(1+\alpha)}.
		\]
		The induction is thus complete.
		
		\smallskip
		
		To conclude the proof, i	t remains to pass to the limit. From \eqref{eq:coeff-a-increment} and
		\eqref{eq:coeff-b-increment}, $\{a_j\}_j$ and $\{b_j\}_j$ are Cauchy sequences.
		Let $a_j\to a_\infty$, $b_j\to b_\infty$, and define
		\[
		\ell_\infty(x):=a_\infty+b_\infty\cdot x.
		\]
		For $m>j$,
		\[
		|a_m-a_j|
		\le
		C\sum_{i=j}^{m-1}\rho^{i(1+\alpha)}
		\le
		C\rho^{j(1+\alpha)},
		\]
		and
		\[
		|b_m-b_j|
		\le
		C\sum_{i=j}^{m-1}\rho^{i\alpha}
		\le
		C\rho^{j\alpha}.
		\]
		Letting $m\to\infty$, we obtain
		\[
		|a_\infty-a_j|\le C\rho^{j(1+\alpha)}
		\quad \text{and} \quad
		|b_\infty-b_j|\le C\rho^{j\alpha}.
		\]
		Consequently,
		\[
		\sup_{B_{\rho^j}}|\ell_\infty-\ell_j|
		\le
		|a_\infty-a_j|
		+
		\rho^j|b_\infty-b_j|
		\le
		C\rho^{j(1+\alpha)}.
		\]
		Combining this with \eqref{eq:induction-flatness}, we get
		\[
		\sup_{B_{\rho^j}}|u-\ell_\infty|
		\le
		C\rho^{j(1+\alpha)}.
		\]
		Given $0<r<1/2$, choose $j\ge0$ such that $\rho^{j+1}<r\le \rho^j$. Then,
		\[
		\rho^{j(1+\alpha)}
		\le
		\rho^{-(1+\alpha)}r^{1+\alpha}.
		\]
		Therefore, after increasing $C$ if necessary, we have
		\eqref{eq:normalized-expansion}.
	\end{proof}

	We are now ready to prove the first of our main results.
	
	\begin{proof}[Proof of \Cref{teo_c1alpha}]
		Fix $\alpha$ as in the assumption \eqref{eqn:alpha-range} and let $\Omega'\subset\subset\Omega$. We prove the estimate in $\Omega'$.
		
		First, we reduce the problem to the normalized regime, as in \Cref{subsec_small}. Let $R:=\frac12\operatorname{dist}(\Omega',\partial\Omega)$,
		fix $y\in\Omega'$, and define, for $x\in B_1$,
		\[
		v(x):=\frac{u(y+Rx)}{K},
		\]
		where $K\ge1$ is to be chosen below. Set
		\[
		\bar p(x):=p(y+Rx)
		\quad\text{and}\quad
		\bar q(x):=q(y+Rx).
		\]
		A direct computation gives
		\[
		|\nabla v|^{\bar p(x)}\bar F(\nabla^2v)
		+
		\bar a(x)|\nabla v|^{\bar q(x)}
		=
		\bar f(x)
		\qquad\text{in }B_1,
		\]
		where
		\[
		\bar F(M)
		:=
		\frac{R^2}{K}
		F\left(\frac{K}{R^2}M\right),
		\]
		\[
		\bar a(x)
		:=
		R^{\bar p(x)-\bar q(x)+2}
		K^{\bar q(x)-\bar p(x)-1}
		a(y+Rx),
		\]
		and
		\[
		\bar f(x)
		:=
		\frac{R^{\bar p(x)+2}}
		{K^{\bar p(x)+1}}
		f(y+Rx).
		\]
		The operator $\bar F$ is uniformly $(\lambda,\Lambda)$-elliptic and satisfies
		$\bar F(0)=0$.
		Let
		\[
		p^{+}:=\sup_\Omega p(x),
		\quad
		p_-:=\inf_\Omega p(x),
		\quad\text{and}\quad
		d_-:=\inf_\Omega(p(x)-q(x)).
		\]
		By assumption, $p_-\ge0$ and $d_-\ge0$. Choose
		\[
		K
		:=
		2\left(
		1+\|u\|_{\mathrm{L}^{\infty}(\Omega)}
		+
		\delta^{-\frac{1}{1+d_-}}
		\|a\|_{\mathrm{L}^{\infty}(\Omega)}^{\frac{1}{1+d_-}}
		+
		\delta^{-\frac{1}{1+p_-}}
		\|f\|_{\mathrm{L}^{\infty}(\Omega)}^{\frac{1}{1+p_-}}
		\right),
		\]
		where $\delta > 0$ is the smallness constant in
		\Cref{lem:one-step-flatness}. Since $R$ is fixed in terms of
		$\operatorname{dist}(\Omega',\partial\Omega)$, increasing the universal constant
		if necessary, this choice guarantees
		\[
		\|v\|_{\mathrm{L}^{\infty}(B_1)}\le1
		\]
		and
		\[
		\|\bar a\|_{\mathrm{L}^{\infty}(B_1)}
		+
		\|\bar f\|_{\mathrm{L}^{\infty}(B_1)}
		\le
		\delta.
		\]
		Then, applied with $\xi=0$, \Cref{prop:normalized-c1alpha} yields an affine
		function
		\[
		\ell_y(x)=\alpha_y+\beta_y\cdot x
		\]
		such that
		\[
		\sup_{B_r}|v-\ell_y|
		\le
		Cr^{1+\alpha},
		\]
		for every $0<r<1/2$, with $C$ depending only on
		$n,\lambda,\Lambda,p^{+}$ and $\alpha$. Rescaling back, we obtain an affine function
		\[
		L_y(z)=u(y)+B_y\cdot(z-y)
		\]
		such that
		\[
		\sup_{B_s(y)}|u-L_y|
		\le
		C K R^{-(1+\alpha)}s^{1+\alpha}
		\]
		for every $0<s<R/2$. Since $R$ depends only on
		$\operatorname{dist}(\Omega',\partial\Omega)$, the constant above is bounded by
		\[
		C
		\left(
		1+\|u\|_{\mathrm{L}^{\infty}(\Omega)}
		+
		\|a\|_{\mathrm{L}^{\infty}(\Omega)}^{\frac{1}{1+d_-}}
		+
		\|f\|_{\mathrm{L}^{\infty}(\Omega)}^{\frac{1}{1+p_-}}
		\right),
		\]
		where now $C$ depends on
		$n,\lambda,\Lambda,\alpha,p^{+},\operatorname{dist}(\Omega',\partial\Omega)$. Since the constants are uniform in $y\in\Omega'$, it is now standard to see that $u\in \mathrm{C}^{1,\alpha}(\Omega')$ and
		\[
		[\nabla u]_{\mathrm{C}^{0,\alpha}(\Omega')}
		\le
		C
		\left(
		1+\|u\|_{\mathrm{L}^{\infty}(\Omega)}
		+
		\|a\|_{\mathrm{L}^{\infty}(\Omega)}^{\frac{1}{1+\inf_\Omega(p-q)}}
		+
		\|f\|_{\mathrm{L}^{\infty}(\Omega)}^{\frac{1}{1+\inf_\Omega p}}
		\right).
		\]
		This proves the theorem.
	\end{proof}

	\section{Improved pointwise regularity}\label{sec:improved-pointwise}
	
	In this section, we prove 
	\Cref{Thm2}. After subtracting the constant $u(0)$, we assume throughout this section that
	\[
	u(0)=0.
	\]
	The key additional observation is that, if $|\nabla u(0)|$ is small, then
	the compactness limit is not merely a solution of a homogeneous uniformly
	elliptic equation, but a homogeneous profile $h$ satisfying
	\[
	h(0)=0
	\quad\text{and}\quad
	\nabla h(0)=0.
	\]
	This extra information improves the decay at the origin and allows us to
	iterate an oscillation reduction.
	For convenience, we introduce the notation
	\[
	\mathcal D(h):=
	\{x\in B_1;\ h(x)=0\ \text{and}\ \nabla h(x)=0\}.
	\]
	
	\begin{lemma}[Improved approximation lemma]\label{lem:improved-approximation}
		Assume \ref{hipoteseF} -- \ref{hip2}, and let $u\in \mathrm{C}^{0}(B_1)$ be a normalized viscosity solution of \eqref{eq principal} 
		with $u(0)=0$. Given $\varepsilon>0$, there exists $\delta=\delta(n,\lambda,\Lambda,p^{+},\varepsilon)>0$ such that, if
		\[
		|\nabla u(0)|
		+
		\|a\|_{\mathrm{L}^{\infty}(B_1)}
		+
		\|f\|_{\mathrm{L}^{\infty}(B_1)}
		\le \delta;
		\]
		then, there exist a uniformly $(\lambda,\Lambda)$-elliptic operator
		$\mathcal F:\mathcal S(n)\to\mathbb R$, with $\mathcal F(0)=0$, and a viscosity
		solution $h$ of $\mathcal F(\nabla^2h)=0$ in $B_{3/4}$,
		such that $0\in\mathcal D(h)$ and
		\[
		\|u-h\|_{\mathrm{L}^{\infty}(B_{1/2})}\le \varepsilon.
		\]
	\end{lemma}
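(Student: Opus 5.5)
We argue by contradiction, following the scheme of \Cref{lem2-4}, and add a final step that uses the information $u(0)=0$ and $|\nabla u(0)|\to 0$ to show $0\in\mathcal D(h)$. Suppose the statement fails for some $\varepsilon_0>0$. Then there are operators $F_j$, uniformly $(\lambda,\Lambda)$-elliptic with $F_j(0)=0$, exponents $0\le q_j\le p_j\le p^+$, and data $a_j,f_j$. There are also normalized viscosity solutions $u_j$ of the corresponding equation \eqref{eq principal} with $u_j(0)=0$ and
\[
|\nabla u_j(0)|+\|a_j\|_\infty+\|f_j\|_\infty\le 1/j .
\]
For these, $\|u_j-h\|_{L^\infty(B_{1/2})}>\varepsilon_0$ for every solution $h$ of a homogeneous $(\lambda,\Lambda)$-elliptic equation $\mathcal F(\nabla^2 h)=0$ with $\mathcal F(0)=0$ and $0\in\mathcal D(h)$. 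Here $\nabla u_j(0)$ is meaningful: since $\|a_j\|_\infty+\|f_j\|_\infty\le 1$, \Cref{teo_c1alpha} (or directly \Cref{prop:normalized-c1alpha} after the normalization of \Cref{subsec_small}) shows that $u_j$ is $C^{1,\alpha}$ near the origin.

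First, compactness. By \Cref{cor:holder-compactness} with $\xi=0$, the $u_j$ are uniformly $C^{0,\beta}$ on compact subsets of $B_{3/4}$. Arzelà--Ascoli gives a subsequence with $u_j\to\bar u$ locally uniformly. The $F_j$ are locally equicontinuous, so along a further subsequence $F_j\to F_\infty$ locally uniformly. \Cref{propHolde} (with $\xi_j=0$) then shows that $\bar u$ solves $F_\infty(\nabla^2\bar u)=0$ in $B_{3/4}$. Moreover $\bar u(0)=\lim u_j(0)=0$.

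The main step, and the point where this proof goes beyond \Cref{lem2-4}, is to show $\nabla\bar u(0)=0$. For this we need a quantitative first-order expansion of $u_j$ at $0$ that is uniform in $j$. Once $j$ is large, $\|a_j\|_\infty+\|f_j\|_\infty\le\delta$, where $\delta$ is the constant of \Cref{lem:one-step-flatness}. So \Cref{prop:normalized-c1alpha} applies with $\xi=0$ and yields affine maps $\ell_j=a_j^\infty+b_j^\infty\cdot x$ with
\[
\sup_{B_r}|u_j-\ell_j|\le Cr^{1+\alpha},\qquad 0<r<1/2,
\]
where $C$ does not depend on $j$. This inequality forces $a_j^\infty=u_j(0)=0$ and $b_j^\infty=\nabla u_j(0)$, so
\[
|u_j(x)|\le |\nabla u_j(0)|\,|x|+C|x|^{1+\alpha}\le |x|/j+C|x|^{1+\alpha}.
\]
Letting $j\to\infty$ gives $|\bar u(x)|\le C|x|^{1+\alpha}$ near $0$. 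Since $\bar u$ is $C^{1,\alpha_0}$ by the interior estimate for $F_\infty$, this implies $\bar u(0)=0$ and $\nabla\bar u(0)=0$, that is, $0\in\mathcal D(\bar u)$.

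Finally, we take $h=\bar u$ in the contradiction hypothesis. This gives $\|u_j-\bar u\|_{L^\infty(B_{1/2})}>\varepsilon_0$ for all $j$, which contradicts the uniform convergence $u_j\to\bar u$ on $\overline{B_{1/2}}$. (Uniform convergence on $\overline{B_{1/2}}\subset B_{3/4}$ holds by the local uniform convergence.)
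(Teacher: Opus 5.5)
Your proof is correct and follows the paper's route: argue by contradiction, extract a compact subsequence, apply stability via \Cref{propHolde}, and use the limit profile itself as the admissible $h$. The one variation is how you get $\nabla\bar u(0)=0$. The paper uses the uniform $\mathrm{C}^{1,\alpha}_{\rm loc}$ bounds of \Cref{teo_c1alpha} to obtain $\nabla u_j\to\nabla u_\infty$ locally uniformly. You instead pass the uniform pointwise bound $|u_j(x)|\le |x|/j+C|x|^{1+\alpha}$, obtained from \Cref{prop:normalized-c1alpha}, to the limit; this is slightly more economical but rests on the same estimate.
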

	
	\begin{proof}
		We argue by contradiction. Let us assume that the lemma is false; then, there exist $\varepsilon_0>0$ and sequences of $\{a_j\},\{p_j\},\{q_j\},\{f_j\},\{u_j\},\{F_j\}$ satisfying 
		\begin{enumerate}[label=$(\roman*)$]
			\item $u_j\in \mathrm{C}^{0}(B_1)$, $u_j(0)=0$ and $\|u_j\|_{\mathrm{L}^{\infty}(B_1)}\leq 1$;
			\smallskip
			\item $F_j$ is uniformly $(\lambda,\Lambda)$-elliptic;
			\item $0\leq q_j(x)\leq p_j(x) \leq \sup_{B_1}p(x)$ and $p_j,q_j\in \mathrm{C}^{0}(B_1)$;
			\smallskip
			\smallskip
			\item  $|\nabla u_j(0)|+\|f_j\|_{\mathrm{L}^{\infty}(B_1)}+ \|a_j\|_{\mathrm{L}^{\infty}(B_1)}\leq \frac{1}{j}$ and $f_j,a_j\in \mathrm{C}^{0}(B_1)$;
			\smallskip
		\end{enumerate}
		and such that
		\begin{equation}
			|\nabla u_j|^{p_j(x)}F_j(\nabla^2u_j)+a_j(x)|\nabla u_j|^{q_j(x)}=f_j(x)  \quad \text{in } B_1,
		\end{equation}
		but
		\begin{equation}\label{admissiblefunc}
			\|u_j-h\|_{\mathrm{L}^{\infty}(B_{1/2})}>\varepsilon_0,
		\end{equation}
		for every viscosity solution $h$, with $0\in\mathcal D(h)$, of a homogeneous uniformly elliptic
		equation $\mathcal F(\nabla^2h)=0$ in $B_{3/4}$,
		with $\mathcal F(0)=0$, and $\mathcal F$ uniformly $(\lambda,\Lambda)$-elliptic.
		
		By \Cref{teo_c1alpha}, the sequence $\{u_j\}_{j\in \mathbb{N}}$ is uniformly bounded in
		$\mathrm{C}^{1,\alpha}_{\rm loc}(B_1)$ for some universal $\alpha\in(0,1)$. Hence, up to
		a subsequence, both
		\[
		u_j\to u_\infty
		\quad \text{and}
		\quad
		\nabla u_j\to \nabla u_\infty
		\] locally uniformly in $B_1$.
		In particular, $u_\infty(0)=0$ and $\nabla u_\infty(0)=0$.

		Since the operators $F_j$ are uniformly elliptic and normalized by $F_j(0)=0$,
		we may also assume, up to a subsequence, that
		\[
		F_j\to F_\infty
		\quad\text{locally uniformly in }\mathcal S(n),
		\]
		where $F_\infty$ is uniformly $(\lambda,\Lambda)$-elliptic and
		$F_\infty(0)=0$.
		By stability, we conclude $u_\infty$ is a viscosity
		solution of
		\[
		F_\infty(\nabla^2u_\infty)=0
		\quad\text{in }B_{3/4}.
		\]
		Moreover, $u_\infty(0)=0$ and $\nabla u_\infty(0)=0$, and therefore $0\in\mathcal D(u_\infty)$. Thus, $u_\infty$ is an admissible
		comparison function in \eqref{admissiblefunc}. Taking
		$h=u_\infty$ gives
		\[
		\|u_j-u_\infty\|_{\mathrm{L}^{\infty}(B_{1/2})}>\varepsilon_0
		\]
		for every $j$, contradicting the locally uniform convergence of $u_j$ to
		$u_\infty$.
	\end{proof}

	The next lemma is the one-step oscillation reduction at points where the gradient is small.
	
	\begin{lemma}[Oscillation reduction]\label{lem:oscillation-reduction}
		Assume \ref{hipoteseF} -- \ref{hip2}, and let $0<\gamma<\alpha_0$. Then, there exist constants
		\[
		\rho_0\in(0,1/2)
		\quad\text{and}\quad
		\tilde\delta\in(0,1),
		\]
		depending only on $n,\lambda,\Lambda,p^{+}$ and $\gamma$, such that the following
		holds: if $u\in \mathrm{C}^{0}(B_1)$ is a normalized viscosity solution of \eqref{eq principal},
		with $u(0)=0$, such that
		\[
		|\nabla u(0)|
		+
		\|a\|_{\mathrm{L}^{\infty}(B_1)}
		+
		\|f\|_{\mathrm{L}^{\infty}(B_1)}
		\le \tilde\delta;
		\]
		then,
		\[
		\sup_{B_{\rho_{0}}}|u|
		\le
		\rho_0^{1+\gamma}.
		\]
	\end{lemma}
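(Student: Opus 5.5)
The plan is to combine the improved approximation lemma, \Cref{lem:improved-approximation}, with the universal $\mathrm{C}^{1,\alpha_0}$ estimate for the limiting profile. Fix $0<\gamma<\alpha_0$ and let $\varepsilon>0$ be chosen at the end. Take $\tilde\delta:=\delta(n,\lambda,\Lambda,p^{+},\varepsilon)$ from \Cref{lem:improved-approximation}. Under the hypotheses of the present lemma, that result provides a uniformly $(\lambda,\Lambda)$-elliptic operator $\mathcal F$ with $\mathcal F(0)=0$ and a viscosity solution $h$ of $\mathcal F(\nabla^2 h)=0$ in $B_{3/4}$ with the following properties:
\[
h(0)=0,\qquad \nabla h(0)=0,\qquad \|u-h\|_{\mathrm{L}^{\infty}(B_{1/2})}\le\varepsilon .
\]

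Next, bound $h$ near the origin. Since $\|u\|_{\mathrm{L}^{\infty}(B_1)}\le 1$, we have $\|h\|_{\mathrm{L}^{\infty}(B_{1/2})}\le 1+\varepsilon\le 2$. Applying the interior $\mathrm{C}^{1,\alpha_0}$ estimate on a slightly smaller ball (after rescaling, or directly as in the statement of \Cref{lem2-4}) gives $\|h\|_{\mathrm{C}^{1,\alpha_0}(B_{1/4})}\le C_*$, with $C_*=C_*(n,\lambda,\Lambda)$. Because $0\in\mathcal D(h)$, the first-order Taylor polynomial of $h$ at $0$ vanishes identically. Hence, for every $\rho\le 1/4$,
\[
\sup_{B_\rho}|h|\le C_*\rho^{1+\alpha_0}.
\]

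The constants are then chosen in order:
\begin{itemize}
\item choose $\rho_0\in(0,1/4)$ so that $C_*\rho_0^{\alpha_0-\gamma}\le 1/2$, which is possible since $\gamma<\alpha_0$;
\item set $\varepsilon:=\rho_0^{1+\gamma}/2$;
\item let $\tilde\delta$ be the $\delta$ given by \Cref{lem:improved-approximation} for this $\varepsilon$.
\end{itemize}
The triangle inequality then gives
\[
\sup_{B_{\rho_0}}|u|\le \sup_{B_{\rho_0}}|u-h|+\sup_{B_{\rho_0}}|h|\le \tfrac12\rho_0^{1+\gamma}+C_*\rho_0^{1+\alpha_0}\le\rho_0^{1+\gamma}.
\]
We may also shrink $\tilde\delta$ so that $\tilde\delta<1$. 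All constants depend only on $n,\lambda,\Lambda,p^{+},\gamma$.

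The argument is short; the only point needing care is the dependence of $\delta$. In \Cref{lem:improved-approximation} the compactness uses \Cref{teo_c1alpha}, so $\nabla u(0)$ makes sense and is controlled uniformly. The lemma's dependence on $n,\lambda,\Lambda,p^{+},\varepsilon$ then transfers to $\tilde\delta$. The $\mathrm{C}^{1,\alpha_0}$ bound for $h$ must also be universal, independent of the particular $\mathcal F$. This holds because the Caffarelli--Cabré estimate depends only on the ellipticity constants.
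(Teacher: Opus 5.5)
Your proposal is correct and follows the paper's proof essentially step for step: you obtain $h$ with $0\in\mathcal D(h)$ from \Cref{lem:improved-approximation} and use the universal $\mathrm{C}^{1,\alpha_0}$ decay $\sup_{B_\rho}|h|\le C\rho^{1+\alpha_0}$; you then fix $\rho_0$ with $C\rho_0^{\alpha_0-\gamma}\le 1/2$, set $\varepsilon=\rho_0^{1+\gamma}/2$, and take $\tilde\delta$ accordingly. Your remark that $h$ is only controlled through $\|h\|_{\mathrm{L}^{\infty}(B_{1/2})}\le 1+\varepsilon$, so the interior estimate should be applied on $B_{1/4}$, is a small point of care that the paper leaves implicit.
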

	
	\begin{proof}
		Let $\varepsilon>0$ to be chosen later. By
		\Cref{lem:improved-approximation}, provided $\tilde\delta$ is small enough,
		there exist a uniformly $(\lambda,\Lambda)$-elliptic operator $\mathcal F$ and a
		solution $h$ of $\mathcal F(\nabla^2h)=0$ in $B_{3/4}$,
		such that $0\in\mathcal D(h)$ and
		\[
		\|u-h\|_{\mathrm{L}^{\infty}(B_{1/2})}\le\varepsilon.
		\]
		Since $h(0)=0$ and $\nabla h(0)=0$, the universal $\mathrm{C}^{1,\alpha_0}$ estimate gives
		\[
		\sup_{B_{\rho_0}}|h|
		\le
		C\rho_0^{1+\alpha_0},
		\]
		where $C=C(n,\lambda,\Lambda)$. Choose first $\rho_0\in(0,1/2)$ sufficiently small so that
		\[
		C\rho_0^{\alpha_0-\gamma}\le \frac12
		\quad
		\text{or, equivalently,}
		\quad
		C\rho_0^{1+\alpha_0}
		\le
		\frac12\rho_0^{1+\gamma}.
		\]
		Next, choose $\varepsilon:=\frac12\rho_0^{1+\gamma}$. Then,
		\[
		\sup_{B_{\rho_0}}|u|
		\le
		\sup_{B_{\rho_0}}|u-h|
		+
		\sup_{B_{\rho_0}}|h|
		\le
		\frac12\rho_0^{1+\gamma}
		+
		\frac12\rho_0^{1+\gamma}
		=
		\rho_0^{1+\gamma}.
		\]
		This proves the lemma.
	\end{proof}
	
	Next, we iterate the previous oscillation reduction. In this proposition, we work
	under normalized growth assumptions on the coefficients. The passage from the
	general constants $K_1,K_2$ in \eqref{source1} to this normalized setting will be
	made in the proof of \Cref{Thm2}.
	
	\begin{proposition}[Growth estimate in the small-gradient regime]
		\label{prop:small-gradient-growth}
		Assume \ref{hipoteseF} -- \ref{hip2}, and let $\rho_0$ and $\tilde\delta$ be as in
		\Cref{lem:oscillation-reduction}. Assume further that $u\in \mathrm{C}^{0}(B_1)$ is a normalized
		viscosity solution of \eqref{eq principal}, with $u(0)=0$, and the normalized growth conditions
		\begin{equation}\label{eq:normalized-growth}
			|f(x)|\le \frac{\tilde\delta}{2}|x|^{\theta_1}
			\quad\text{and}\quad
			|a(x)|\le \frac{\tilde\delta}{2}|x|^{\theta_2}
			\quad\text{in }B_1.
		\end{equation}
		Let $\gamma$ as in \eqref{eqn:gamma-thm2}.
		Then, there exists $C>0$, depending only on
		$n,\lambda,\Lambda,p^{+},\gamma,\rho_0$ and $\tilde\delta$, such that, if
		$0<r\le \rho_0$ and
		\[
		|\nabla u(0)|\le \tilde\delta r^\gamma;
		\]
		then,
		\begin{equation}\label{eq:small-gradient-growth}
			\sup_{B_r}|u|
			\le
			C r^{1+\gamma}.
		\end{equation}
	\end{proposition}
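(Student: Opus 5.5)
The plan is to iterate \Cref{lem:oscillation-reduction} along the geometric scales $\rho_0^k$, stopping at the first scale where the gradient condition fails. Fix $r\in(0,\rho_0]$ with $|\nabla u(0)|\le\tilde\delta r^\gamma$, and let $k\ge1$ be the integer with $\rho_0^{k+1}<r\le\rho_0^k$. By induction on $j=0,1,\dots,k$ I will prove
\[
\sup_{B_{\rho_0^j}}|u|\le \rho_0^{j(1+\gamma)}.
\]
The case $j=0$ is the normalization $\|u\|_{\mathrm{L}^\infty(B_1)}\le1$. Once the bound holds at $j=k$, monotonicity of the supremum gives $\sup_{B_r}|u|\le \rho_0^{k(1+\gamma)}\le \rho_0^{-(1+\gamma)}r^{1+\gamma}$, which is \eqref{eq:small-gradient-growth} with $C=\rho_0^{-(1+\gamma)}$.

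For the inductive step, assume the bound at some $j<k$ and set
\[
u_j(x):=\frac{u(\rho_0^jx)}{\rho_0^{j(1+\gamma)}}.
\]
Then $\|u_j\|_{\mathrm{L}^\infty(B_1)}\le1$ and $u_j(0)=0$. This is the unshifted version of the computation in \Cref{prop:normalized-c1alpha}, with $\xi=0$, $b_k=0$ and $\alpha$ replaced by $\gamma$. It shows that $u_j$ solves an equation of the form \eqref{eq principal} with the following data: operator $F_j(M)=\rho_0^{j(1-\gamma)}F(\rho_0^{j(\gamma-1)}M)$, exponents $p(\rho_0^j x),q(\rho_0^jx)$, and
\[
f_j(x)=\rho_0^{j(1-\gamma(1+p_j(x)))}f(\rho_0^jx),\qquad
a_j(x)=\rho_0^{j(1-\gamma(1+p_j(x)-q_j(x)))}a(\rho_0^jx).
\]
To apply \Cref{lem:oscillation-reduction} to $u_j$, three quantities need control.

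\textbf{The gradient.} We have $|\nabla u_j(0)|=\rho_0^{-j\gamma}|\nabla u(0)|\le\tilde\delta\,(r/\rho_0^j)^\gamma\le\tilde\delta$, because $j\le k$ gives $r\le\rho_0^j$.

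\textbf{The source term.} By \eqref{eq:normalized-growth}, $|f(\rho_0^jx)|\le\frac{\tilde\delta}{2}\rho_0^{j\theta_1}$. Hence $|f_j|\le\frac{\tilde\delta}{2}\rho_0^{j(1+\theta_1-\gamma(1+p_j))}\le\frac{\tilde\delta}{2}$, since $\gamma\le(1+\theta_1)/(1+p^+)$ makes the exponent nonnegative.

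\textbf{The Hamiltonian coefficient.} In the same way, $|a_j|\le\frac{\tilde\delta}{2}\rho_0^{j(1+\theta_2-\gamma(1+p_j-q_j))}\le\frac{\tilde\delta}{2}$, using $\gamma\le(1+\theta_2)/(1+d^+)$ and $p_j-q_j\le d^+$.

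This is the step where the three constraints in \eqref{eqn:gamma-thm2} are used. It is also the main point of care: the two data bounds each contribute $\tilde\delta/2$, which already exhausts the budget $\tilde\delta$ in \Cref{lem:oscillation-reduction}, so the gradient term does not fit as well. I would repair this by running the proof with $\tilde\delta/2$ in place of $\tilde\delta$ in the gradient hypothesis, or equivalently by shrinking the constants by a factor of $3$; either way the constant $C$ changes only harmlessly.

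With all three bounds in hand, \Cref{lem:oscillation-reduction} gives $\sup_{B_{\rho_0}}|u_j|\le\rho_0^{1+\gamma}$. Scaling back, this is $\sup_{B_{\rho_0^{j+1}}}|u|\le\rho_0^{(j+1)(1+\gamma)}$, which closes the induction. The lemma also requires $\gamma<\alpha_0$, and this is part of \eqref{eqn:gamma-thm2}.

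There is one subtlety to check. The hypothesis of \Cref{lem:oscillation-reduction} involves $|\nabla u(0)|$, so $u$ must be differentiable at the origin. This holds by \Cref{teo_c1alpha}, and the scaled functions $u_j$ inherit the property.
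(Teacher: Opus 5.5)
Your proposal is correct and is essentially the paper's own proof: rescale $u_j(x)=\rho_0^{-j(1+\gamma)}u(\rho_0^jx)$, use the three constraints in \eqref{eqn:gamma-thm2} exactly as you do to keep $f_j,a_j$ small, apply \Cref{lem:oscillation-reduction} at each scale, and interpolate between $\rho_0^{k+1}$ and $\rho_0^{k}$ to get $C=\rho_0^{-(1+\gamma)}$ (the paper proves the discrete implication for every $k$ instead of fixing $k$ from $r$ first, which makes no difference). The $\tilde\delta$-budget slip you flag also appears, unremarked, in the paper's argument, and of your two repairs only the second one closes it: decouple the constants so that each of the three quantities is at most a third of the lemma's $\tilde\delta$, because halving just the gradient hypothesis still allows a total larger than $\tilde\delta$.
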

	
	\begin{proof}
		We first prove the discrete estimate
		\begin{equation}\label{eq:discrete-small-gradient}
			|\nabla u(0)|\le \tilde\delta \rho_0^{k\gamma}
			\quad\implies\quad
			\sup_{B_{\rho_0^k}}|u|
			\le
			\rho_0^{k(1+\gamma)}
		\end{equation}
		for every integer $k\ge1$.
		
		For $k=1$, this is exactly \Cref{lem:oscillation-reduction}, since $|\nabla u(0)|\le \tilde\delta \rho_0^\gamma\le\tilde\delta$ and, by \eqref{eq:normalized-growth},
		\[
		\|a\|_{\mathrm{L}^{\infty}(B_1)}+\|f\|_{\mathrm{L}^{\infty}(B_1)}
		\le \tilde\delta.
		\]
		By induction,
		assume now that \eqref{eq:discrete-small-gradient} holds for some $k\ge1$, and
		suppose
		\[
		|\nabla u(0)|\le \tilde\delta \rho_0^{(k+1)\gamma}.
		\]
		Consider the auxiliary rescaled function
		\[
		v_k(x):=
		\frac{u(\rho_0^kx)}
		{\rho_0^{k(1+\gamma)}}.
		\]
		By the induction hypothesis, $\|v_k\|_{\mathrm{L}^{\infty}(B_1)}\le1$, and $v_k(0)=0$. Moreover,
		\[
		\nabla v_k(0)
		=
		\rho_0^{-k\gamma}\nabla u(0)
		\quad
		\text{and so}
		\quad
		|\nabla v_k(0)|
		\le
		\tilde\delta \rho_0^\gamma
		\le
		\tilde\delta.
		\]
		Moreover, a direct computation shows that $v_k$ solves
		\[
		|\nabla v_k|^{p_k(x)}F_k(\nabla^2v_k)
		+
		a_k(x)|\nabla v_k|^{q_k(x)}
		=
		f_k(x)
		\qquad\text{in }B_1,
		\]
		where $p_k(x):=p(\rho_0^kx)$, $q_k(x):=q(\rho_0^kx)$,
		\[
		F_k(M):=
		\rho_0^{k(1-\gamma)}
		F\left(\rho_0^{-k(1-\gamma)}M\right),
		\]
		\[
		f_k(x):=
		\rho_0^{k[1-\gamma(1+p_k(x))]}f(\rho_0^kx),
		\]
		and
		\[
		a_k(x):=
		\rho_0^{k[1-\gamma(p_k(x)-q_k(x)+1)]}a(\rho_0^kx).
		\]
		The operator $F_k$ is uniformly $(\lambda,\Lambda)$-elliptic and 
		$F_k(0)=0$.
		By \eqref{eq:normalized-growth}, we estimate, for $x\in B_1$,
		\[
		|f_k(x)|
		\le
		\frac{\tilde\delta}{2}
		\rho_0^{k[1-\gamma(1+p_k(x))+\theta_1]}.
		\]
		By the assumptions on $\gamma$, $1-\gamma(1+p_k(x))+\theta_1\ge0$ and so
		\[
		\|f_k\|_{\mathrm{L}^{\infty}(B_1)}\le \frac{\tilde\delta}{2}.
		\]
		Similarly,
		\[
		\gamma\le
		\frac{1+\theta_2}{1+\sup_{B_1}(p-q)}
		\implies
		\|a_k\|_{\mathrm{L}^{\infty}(B_1)}\le \frac{\tilde\delta}{2}.
		\]
		Hence,
		\[
		|\nabla v_k(0)|
		+
		\|a_k\|_{\mathrm{L}^{\infty}(B_1)}
		+
		\|f_k\|_{\mathrm{L}^{\infty}(B_1)}
		\le
		\tilde\delta.
		\]
		Applying \Cref{lem:oscillation-reduction} to $v_k$, we obtain
		\[
		\sup_{B_{\rho_0}}|v_k|
		\le
		\rho_0^{1+\gamma}.
		\]
		Rescaling back gives
		\[
		\sup_{B_{\rho_0^{k+1}}}|u|
		\le
		\rho_0^{(k+1)(1+\gamma)}
		\]
		and the discrete estimate is proved.
		
		Now, let $0<r\le\rho_0$, assume $|\nabla u(0)|\le \tilde\delta r^\gamma$, and choose $k\ge1$ such that
		\[
		\rho_0^{k+1}<r\le \rho_0^k.
		\]
		Then,
		\[
		|\nabla u(0)|
		\le
		\tilde\delta r^\gamma
		\le
		\tilde\delta \rho_0^{k\gamma}.
		\]
		By the discrete estimate, and
		since $B_r\subset B_{\rho_0^k}$,
		\[
		\sup_{B_r}|u|
		\le
		\rho_0^{k(1+\gamma)}.
		\]
		Finally, from $\rho_0^{k+1}<r$, we have $\rho_0^k<\rho_0^{-1}r$ and, therefore, $\rho_0^{k(1+\gamma)}
		\le
		\rho_0^{-(1+\gamma)}r^{1+\gamma}$. This proves \eqref{eq:small-gradient-growth}.
	\end{proof}

	We are now ready to prove \Cref{Thm2}. The proof is first carried out in
	the normalized setting; then, the standard scaling argument from \Cref{subsec_small} gives the general form.
	
	\begin{proof}[Proof of \Cref{Thm2}]
		First, we prove the result under the normalized assumptions
		\[
		\|u\|_{\mathrm{L}^{\infty}(B_1)}\le1,
		\quad
		|f(x)|\le \frac{\tilde\delta}{2}|x|^{\theta_1},
		\quad \text{and} \quad 
		|a(x)|\le \frac{\tilde\delta}{2}|x|^{\theta_2}
		\quad\text{in }B_1.
		\]
		Fix $\gamma$ as in \eqref{eqn:gamma-thm2}
		and set $P:=\nabla u(0)$. We prove that, for every $0<r\le\rho_0$,
		\begin{equation}\label{eq:normalized-thm2-estimate}
			\sup_{B_r}|u(x)-P\cdot x|
			\le
			C r^{1+\gamma}.
		\end{equation}
		If $P=0$, \Cref{prop:small-gradient-growth} applies at every
		$0<r\le\rho_0$, and \eqref{eq:normalized-thm2-estimate} follows immediately.
		Then, suppose $P\neq0$ and define the transition scale
		\[
		\varrho:=
		\left(\frac{|P|}{\tilde\delta}\right)^{1/\gamma}.
		\]
		
		\smallskip
		
		\noindent\textit{Case 1:} $\varrho\le\rho_0$.
		If $\rho\in[\varrho,\rho_0]$, then, since $|P|=\tilde\delta\varrho^\gamma\le \tilde\delta\rho^\gamma$,  \Cref{prop:small-gradient-growth} gives
		\[
		\sup_{B_\rho}|u|
		\le
		C\rho^{1+\gamma}.
		\] and, hence,
		\[
		\sup_{B_\rho}|u(x)-P\cdot x|
		\le
		\sup_{B_\rho}|u|
		+
		|P|\rho
		\le
		C\rho^{1+\gamma}
		+
		\tilde\delta\rho^{1+\gamma}
		\le
		C\rho^{1+\gamma}.
		\]
		Thus, it remains to address the case $0<\rho<\varrho$. Define
		\[
		v(x):=
		\frac{u(\varrho x)}
		{\varrho^{1+\gamma}}.
		\]
		Then, $v(0)=0$ and $|\nabla v(0)|
		=
		\tilde\delta$.
		Moreover, applying the estimate already proved at scale $\varrho$, we get
		\[
		\|v\|_{\mathrm{L}^{\infty}(B_1)}\le C.
		\]
		The function $v$ solves
		\[
		|\nabla v|^{\bar p(x)}\bar F(\nabla^2v)
		+
		\bar a(x)|\nabla v|^{\bar q(x)}
		=
		\bar f(x),
		\quad\text{in }B_1,
		\]
		where
		\[
		\bar p(x):=p(\varrho x),
		\quad
		\bar q(x):=q(\varrho x),
		\quad 
		\bar F(M):=
		\varrho^{1-\gamma}F(\varrho^{\gamma-1}M),\\[0.5em]
		\]
		\[
		\bar f(x):=
		\varrho^{1-\gamma(1+\bar p(x))}f(\varrho x),
		\quad
		\text{and}
		\quad
		\bar a(x):=
		\varrho^{1-\gamma(\bar p(x)-\bar q(x)+1)}a(\varrho x).\\[0.3em]
		\]
		By the growth assumptions and the choice of $\gamma$,
		\[
		\|\bar f\|_{\mathrm{L}^{\infty}(B_1)}
		+
		\|\bar a\|_{\mathrm{L}^{\infty}(B_1)}
		\le C.
		\]
		Hence, the $\mathrm{C}^{1,\alpha}$ estimate from \Cref{teo_c1alpha} gives
		\[
		[\nabla v]_{\mathrm{C}^{0,\alpha}(B_{1/2})}\le C
		\]
		for some universal $\alpha\in(0,1)$. Therefore, choosing a universal
		$r_*\in(0,1/2)$ sufficiently small, we have
		\[
		|\nabla v(x)-\nabla v(0)|
		\le
		\frac{\tilde\delta}{2},
		\quad\text{for }x\in B_{r_*}.
		\]
		Consequently,
		\[
		|\nabla v(x)|\ge \frac{\tilde\delta}{2},
		\quad\text{in }B_{r_*}.
		\]
		In $B_{r_*}$ the equation is uniformly elliptic. Indeed, dividing by
		$|\nabla v|^{\bar p(x)}$, we obtain
		\[
		\bar F(\nabla^2v)
		=
		|\nabla v|^{-\bar p(x)}\bar f(x)
		-
		\bar a(x)|\nabla v|^{\bar q(x)-\bar p(x)}
		\quad\text{in }B_{r_*},
		\]
		and the right-hand side is bounded by a universal constant. By the standard
		interior $\mathrm{C}^{1,\alpha_0}$ estimate for uniformly elliptic equations with bounded
		right-hand side, and since $\gamma<\alpha_0$, we obtain
		\[
		\sup_{B_\sigma}
		|v(x)-\nabla v(0)\cdot x|
		\le
		C\sigma^{1+\gamma},
		\]
		for every $0<\sigma\le r_*/2$.
		Rescaling back gives
		\[
		\sup_{B_\rho}
		|u(x)-P\cdot x|
		\le
		C\rho^{1+\gamma},
		\quad
		\text{for every }
		0<\rho\le \frac{r_*}{2}\varrho.
		\]
		For the intermediate scales
		\[
		\frac{r_*}{2}\varrho<\rho<\varrho,
		\]
		we use the estimate already proved at scale $\varrho$:
		\[
		\sup_{B_\rho}|u(x)-P\cdot x|
		\le
		\sup_{B_\varrho}|u(x)-P\cdot x|
		\le
		C\varrho^{1+\gamma}.
		\]
		Since $\rho>(r_*/2)\varrho$, we have
		\[
		\varrho^{1+\gamma}
		\le
		\left(\frac2{r_*}\right)^{1+\gamma}\rho^{1+\gamma}.
		\]
		Thus,
		\[
		\sup_{B_\rho}|u(x)-P\cdot x|
		\le
		C\rho^{1+\gamma}
		\]
		also in the intermediate range. This completes the proof of
		\eqref{eq:normalized-thm2-estimate} when $\varrho\le\rho_0$.
		
		\smallskip
		
		\noindent\textit{Case 2:} $\varrho>\rho_0$.
		In this case $|P|>\tilde\delta\rho_0^\gamma$. By \Cref{teo_c1alpha}, $\nabla u$ is Hölder continuous in $B_{1/2}$ with a
		universal bound in the normalized regime. Hence, there exists a universal
		$r_0\in(0,\rho_0)$ such that
		\[
		|\nabla u(x)-P|
		\le
		\frac12\tilde\delta\rho_0^\gamma,
		\quad\text{for }x\in B_{r_0}.
		\]
		Then,
		\[
		|\nabla u(x)|\ge
		\frac12\tilde\delta\rho_0^\gamma,
		\quad\text{in }B_{r_0}.
		\]
		Thus, the equation is uniformly elliptic in $B_{r_0}$ after division by
		$|\nabla u|^{p(x)}$, with bounded right-hand side. The standard interior
		$\mathrm{C}^{1,\alpha_0}$ estimate gives
		\[
		\sup_{B_r}|u(x)-P\cdot x|
		\le
		Cr^{1+\gamma},
		\quad\text{for }0<r\le r_0/2.
		\]
		For $r_0/2<r\le \rho_0$, the estimate follows from the boundedness of $u$ and
		the universal bound for $|P|$, after increasing possibly $C$. Hence,
		\[
		\sup_{B_r}|u(x)-P\cdot x|
		\le
		Cr^{1+\gamma},
		\quad\text{for }0<r\le \rho_0.
		\]
		
		\smallskip
		
		Combining the two cases proves the normalized estimate
		\[
		\sup_{B_r}|u(x)-\nabla u(0)\cdot x|
		\le
		Cr^{1+\gamma},
		\quad\text{for }0<r\le\rho_0.
		\]
		For $\rho_0<r\le1/2$, the same estimate follows from the boundedness of $u$ and
		$\nabla u(0)$, again after increasing $C$.
		
		Finally, we can now remove the normalization assumption by the standard scaling argument from \Cref{subsec_small}. This proves \Cref{Thm2}
		%
%		We now remove the normalization. For the original solution, define
%		\[
%		\widetilde u(x):=
%		\frac{u(\tau x)-u(0)}
%		{K},
%		\]
%		where $K\ge1$ and $\tau\in(0,1)$ are chosen so that $\|\widetilde u\|_{\mathrm{L}^{\infty}(B_1)}\le1$, and the rescaled coefficients satisfy
%		\[
%		|\widetilde f(x)|\le\frac{\tilde\delta}{2}|x|^{\theta_1}
%		\quad\text{and}\quad
%		|\widetilde a(x)|\le\frac{\tilde\delta}{2}|x|^{\theta_2},
%		\quad\text{in }B_1.
%		\]
%		This is achieved by the same scaling computation used in the preliminary
%		smallness regime, with $K$ depending on $\|u\|_{\mathrm{L}^{\infty}(B_1)}$ and with $\tau$
%		depending on the structural constants, $K_1,K_2,\theta_1,\theta_2$, and
%		$\tilde\delta$.
%		%
%		Applying the normalized estimate to $\widetilde u$ and rescaling back, we obtain
%		\[
%		\sup_{B_r}
%		|u(x)-u(0)-\nabla u(0)\cdot x|
%		\le
%		C r^{1+\gamma},
%		\]
%		for every $0<r\le1/2$, where $C$ depends on
%		$\|u\|_{\mathrm{L}^{\infty}(B_1)}$ and on the universal parameters $n,\lambda,\Lambda,p^{+},\sup_{B_1}(p-q),
%		\theta_1,\theta_2,K_1,K_2,\gamma$. This proves \Cref{Thm2}.
	\end{proof}

	\section{Pointwise Schauder estimate at extremal points}
	\label{sec:schauder-extremal}

	In this final section, we prove \Cref{SchauderEstimate}, which gives a pointwise Schauder-type estimate at local minimum
	points. The argument follows the same compactness-and-scaling philosophy used in
	the previous sections, but now the limiting homogeneous profile is forced to be
	constant by the strong maximum principle.
	
	We begin with the flatness lemma around extremal points that drives the iteration in the proof of \Cref{SchauderEstimate}.
	
	\begin{lemma}[Flatness at an extremal point]\label{lemaC2}
		Assume \ref{hipoteseF} -- \ref{hip2}, and let $u\in \mathrm{C}^{0}(B_1)$ be a normalized viscosity solution of \eqref{eq:theta-schauder-assumptions}. Assume further that $0$ is a local minimum of $u$. Then, for every
		$\varepsilon>0$, there exists a constant $\delta_0=\delta_0(n,\lambda,\Lambda,p^{+},\varepsilon)>0$ such that, if
		\[
		\|a\|_{\mathrm{L}^{\infty}(B_1)}
		+
		\|f\|_{\mathrm{L}^{\infty}(B_1)}
		\le
		\delta_0;
		\]
		then,
		\[
		\sup_{B_{1/2}}(u-u(0))
		\le
		\varepsilon.
		\]
	\end{lemma}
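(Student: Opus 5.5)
The plan is to argue by compactness and contradiction, as in \Cref{lem2-4} and \Cref{lem:improved-approximation}. The new ingredient is that the limiting homogeneous profile inherits an interior minimum at the origin, so the strong maximum principle forces it to be constant. I read the hypothesis as saying that $u$ solves \eqref{eq principal}, that $\|u\|_{\mathrm{L}^{\infty}(B_1)}\le1$, and that $0$ is a minimum of $u$ over a fixed ball, which after the scaling of \Cref{subsec_small} we may take to be $B_1$. Some uniform neighborhood is necessary: if the minimum were only local on balls whose radii shrink along a sequence, the extremal property would not pass to the limit.

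\textbf{Step 1: contradiction sequence.} Suppose the lemma fails for some $\varepsilon_0>0$. Then there are sequences $u_j,F_j,a_j,f_j,p_j,q_j$ satisfying \ref{hipoteseF} -- \ref{hip2} uniformly, with $p_j\le p^+$, $\|u_j\|_{\mathrm{L}^{\infty}(B_1)}\le1$, $u_j\ge u_j(0)$ in $B_1$, and $\|a_j\|_\infty+\|f_j\|_\infty\le 1/j$, but with $\sup_{B_{1/2}}(u_j-u_j(0))>\varepsilon_0$.

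\textbf{Step 2: passage to the limit.} By \Cref{cor:holder-compactness} with $\xi=0$ (or \Cref{teo_c1alpha}), the $u_j$ are bounded in $\mathrm{C}^{0,\beta}(B_{3/4})$. Arzel\`a--Ascoli then gives $u_j\to u_\infty$ uniformly on $\overline{B_{5/8}}$ along a subsequence. Since the $F_j$ are normalized and equicontinuous, we also get $F_j\to F_\infty$ locally uniformly, with $F_\infty$ uniformly $(\lambda,\Lambda)$-elliptic and $F_\infty(0)=0$. \Cref{propHolde} shows that $u_\infty$ is a viscosity solution of $F_\infty(\nabla^2u_\infty)=0$ in $B_{3/4}$. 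Uniform convergence preserves the inequality $u_j\ge u_j(0)$, so $u_\infty\ge u_\infty(0)$ in $B_{5/8}$ and the origin is an interior minimum of $u_\infty$.

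\textbf{Step 3: strong maximum principle (the key step).} By uniform ellipticity and $F_\infty(0)=0$, the function $w:=u_\infty-u_\infty(0)\ge0$ satisfies $\mathcal P^-_{\lambda,\Lambda}(\nabla^2w)\le F_\infty(\nabla^2 w)=0$ in the viscosity sense, so $w$ is a nonnegative supersolution of the Pucci equation. Since $w(0)=0$ and $B_{5/8}$ is connected, the strong maximum principle for viscosity supersolutions of Pucci's equation (see \cite{Caffa-Cabre}, e.g.\ via the weak Harnack inequality) gives $w\equiv0$ in $B_{5/8}$. Hence $u_\infty$ is constant there.

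\textbf{Step 4: contradiction.} Uniform convergence on $\overline{B_{1/2}}$ gives
\[
\sup_{B_{1/2}}\bigl(u_j-u_j(0)\bigr)\le 2\|u_j-u_\infty\|_{\mathrm{L}^{\infty}(B_{1/2})}+\sup_{B_{1/2}}\bigl(u_\infty-u_\infty(0)\bigr)\to0,
\]
which contradicts Step 1. The constant $\delta_0$ then depends only on $n,\lambda,\Lambda,p^+,\varepsilon$, because every bound used along the sequence is universal.

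I expect the main obstacle to be the bookkeeping in Steps 2 and 3. First, one must make sure the extremal property survives the limit, which is exactly why the minimum has to hold on a fixed ball. Second, one must check that the strong maximum principle applies to a merely viscosity supersolution of $\mathcal P^-\le0$. Third, one needs that a constant is an admissible limit profile. The analytic input beyond what the paper has already established is only the strong maximum principle.
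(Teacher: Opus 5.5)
Your proposal is correct and follows the paper's own route: contradiction, H\"older compactness plus \Cref{propHolde} to get a limit $u_\infty$ solving $F_\infty(\nabla^2u_\infty)=0$ in $B_{3/4}$, and the strong maximum principle forcing $u_\infty$ to be constant. Your insistence that $u\ge u(0)$ hold on a fixed ball (say $B_1$) is well placed and not mere bookkeeping, for three reasons: the paper's step ``$0$ is a local minimum of $u_\infty$'' silently needs it; it is what holds in the application once $\tau$ in Step~2 of the proof of \Cref{SchauderEstimate} is taken below the radius where $u\ge u(0)$; and without it the statement fails, e.g.\ in $\mathbb R^2$ (take $F=\operatorname{tr}$, $p\equiv q\equiv0$, $a\equiv0$, $f\equiv\delta$) the function $u=\frac{\delta}{4}|x|^2+\frac12(x_1^3-3x_1x_2^2)$ solves $\Delta u=\delta$, has $\|u\|_{\mathrm{L}^{\infty}(B_1)}\le1$ and a strict local minimum at $0$, yet $\sup_{B_{1/2}}(u-u(0))\ge\frac1{16}$ for every $\delta\in(0,1]$.
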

	
	\begin{proof}
		By contradiction, let us assume that there exist a constant $\varepsilon_0>0$, and sequences $\{a_j\}$, $\{p_j\}$, $\{q_j\}$, $\{f_j\}$, $\{u_j\}$, and $\{F_j\}$ satisfying 
		\begin{enumerate}[label=$(\roman*)$]
			\item $u_j\in \mathrm{C}^{0}(B_1)$, $\|u_j\|_{\mathrm{L}^{\infty}(B_1)}\leq 1$, and $0$ local minimum of $u_{j}$;
			\smallskip
			\item $F_j$ is uniformly $(\lambda,\Lambda)$-elliptic and $F_{j}(0)=0$;
			\smallskip
			\item $f_j,a_j\in \mathrm{C}^{0}(B_1)$ with  $\|a_j\|_{\mathrm{L}^{\infty}(B_1)}+\|f_j\|_{\mathrm{L}^{\infty}(B_1)}\leq 1/j$;
			\smallskip
			\item $p_j,q_j\in \mathrm{C}^{0}(B_1)$ with $0\leq q_j(x)\leq p_j(x)\leq p^{+}$;
		\end{enumerate}
		and such that $u_{j}$ is a viscosity solution of
		\begin{equation}
			|\nabla u_j|^{p_j(x)}F_j(\nabla^2u_j)+a_j(x)|\nabla u_j|^{q_j(x)}=f_j(x)  \quad \text{in } B_1,
		\end{equation}
		but
		\begin{equation}\label{minimum}
			\sup_{B_{1/2}} \{u_j-u_j(0)\} >\varepsilon_0.
		\end{equation}
		We argue as in \Cref{lem2-4} and there exist $u_{\infty}\in \mathrm{C}^{0}(B_1)$ and $F_{\infty}$ uniformly $(\lambda,\Lambda)$-elliptic, with $F_{\infty}(0) = 0$, such that, up to subsequences, $u_j\rightarrow u_{\infty}$ and $F_j\rightarrow F_{\infty}$ locally uniformily. Moreover, by stability of viscosity solutions,
		$$F_{\infty}(\nabla^2 u_{\infty})=0,\quad\text{in } B_{3/4},$$
		in the viscosity sense. From the uniform convergence of $u_j$,  $0$ is a local minimum of $u_{\infty}$. Thus, by the strong maximum principle, $u_{\infty}$ is constant in $B_{3/4}$, which contradicts \eqref{minimum}.
	\end{proof}
	
	We now prove the theorem by iterating the preceding flatness lemma. As before, the proof is
	carried out in a normalized regime; the general case following by the scaling argument from \Cref{subsec_small}.
	
	\begin{proof}[Proof of \Cref{SchauderEstimate}]
		Since $0$ is a local minimum of $u$, after possibly reducing $r_0$ we may work in
		a ball where $u(x)\ge u(0)$. Subtracting the constant $u(0)$, we assume in the normalized part of the proof
		that $u(0)=0$.
		
		\smallskip
		
		\noindent\textit{Step 1:} normalized estimate.
		Assume first that $\|u\|_{\mathrm{L}^{\infty}(B_1)}\le1$ and that the growth assumptions are normalized as
		\begin{equation}\label{eq:normalized-schauder-growth}
			|f(x)|\le \frac{\delta}{2}\,|x|^{\theta_1}
			\quad \text{and} \quad
			|a(x)|\le \frac{\delta}{2}\,|x|^{\theta_2}
			\quad\text{in }B_1,
		\end{equation}
		where $\delta>0$ is to be chosen below.
		Let $\varepsilon:=2^{-(2+\tilde\alpha)}$ and choose
		\[
		\delta\le \delta_0(n,\lambda,\Lambda,p^{+},\varepsilon),
		\]
		where $\delta_0$ is the smallness constant in \Cref{lemaC2}. Then, the flatness
		lemma gives
		\begin{equation}\label{eq:first-flatness-schauder}
			\sup_{B_{1/2}}u
			\le
			2^{-(2+\tilde\alpha)}.
		\end{equation}
		We claim that, for every integer $k\ge1$, there holds
		\begin{equation}\label{eq:dyadic-schauder-estimate}
			\sup_{B_{2^{-k}}}u
			\le
			2^{-k(2+\tilde\alpha)}.
		\end{equation}
		The case $k=1$ is \eqref{eq:first-flatness-schauder}.
		Assume, by induction, \eqref{eq:dyadic-schauder-estimate} holds up to $k\ge1$, and define
		\[
		v_k(x)
		:=
		2^{k(2+\tilde\alpha)}
		u(2^{-k}x),
		\quad x\in B_1.
		\]
		Then, $v_k(0)=0$, $0$ is a local minimum of $v_k$, and by the induction hypothesis
		\[
		\|v_k\|_{\mathrm{L}^{\infty}(B_1)}\le1.
		\]
		We compute the equation satisfied by $v_k$. If
		\[
		p_k(x):=p(2^{-k}x)
		\quad\text{and}\quad
		q_k(x):=q(2^{-k}x),
		\]
		then $v_k$ is a viscosity solution of
		\[
		|\nabla v_k|^{p_k(x)}F_k(\nabla^2v_k)
		+
		a_k(x)|\nabla v_k|^{q_k(x)}
		=
		f_k(x)
		\qquad\text{in }B_1,
		\]
		where
		\[
		F_k(M):=
		2^{k\tilde\alpha}F(2^{-k\tilde\alpha}M),\\[0.5em]
		\]
		\[
		f_k(x):=
		2^{k[(1+\tilde\alpha)p_k(x)+\tilde\alpha]}
		f(2^{-k}x),
		\
		\text{and}\\[0.5em]
		\]
		\[
		a_k(x):=
		2^{k[(1+\tilde\alpha)(p_k(x)-q_k(x))+\tilde\alpha]}
		a(2^{-k}x).\\[0.5em]
		\]
		The operator $F_k$ is uniformly $(\lambda,\Lambda)$-elliptic and 
		$F_k(0)=0$.
		
		We now check that the rescaled coefficients remain small. Observe
		\[
		\tilde\alpha
		\le
		\frac{\theta_1-p^{+}}{1+p^{+}}
		\implies
		(1+\tilde\alpha)p_k(x)+\tilde\alpha-\theta_1\le0,
		\] and, hence, from
		\eqref{eq:normalized-schauder-growth},
		\[
		\|f_k\|_{\mathrm{L}^{\infty}(B_1)}\le \frac{\delta}{2}.
		\]
		Similarly, the choice of $\bar \alpha$ ensure smallness of $a_{k}$ and we have
		\[
		\|a_k\|_{\mathrm{L}^{\infty}(B_1)}
		+
		\|f_k\|_{\mathrm{L}^{\infty}(B_1)}
		\le
		\delta.
		\]
		By \Cref{lemaC2}, applied to $v_k$, we obtain
		\[
		\sup_{B_{1/2}}v_k
		\le
		2^{-(2+\tilde\alpha)}.
		\]
		Scaling back gives
		\[
		\sup_{B_{2^{-(k+1)}}}u
		\le
		2^{-(k+1)(2+\tilde\alpha)}.
		\]
		This completes the induction and proves
		\eqref{eq:dyadic-schauder-estimate}.
		
		Now, let $0<r\le1/2$ and choose $k\ge1$ such that $2^{-(k+1)}<r\le2^{-k}$. Then,
		\[
		\sup_{B_r}u
		\le
		\sup_{B_{2^{-k}}}u
		\le
		2^{-k(2+\tilde\alpha)}.
		\]
		Since $2^{-(k+1)}<r$, we have $2^{-k(2+\tilde\alpha)}
		\le
		2^{2+\tilde\alpha}r^{2+\tilde\alpha}$. Thus, in the normalized regime,
		\begin{equation}\label{eq:normalized-schauder-final}
			\sup_{B_r}(u-u(0))
			\le
			C r^{2+\tilde\alpha},
			\quad\text{for }0<r\le1/2.
		\end{equation}
		
		\smallskip
		
		\noindent\textit{Step 2:} rescaling back.
		We now remove the normalization. Define, for $x\in B_1$,
		\[
		w(x):=
		\frac{u(\tau x)-u(0)}{K},
		\]
		where $K\ge1$ and $\tau\in(0,1)$ are to be chosen below. We have $w(0)=0$, and $0$ is a
		local minimum of $w$. Moreover, $w$ solves the equation of the same type,
		\[
		|\nabla w|^{\bar p(x)}\bar F(\nabla^2w)
		+
		\bar a(x)|\nabla w|^{\bar q(x)}
		=
		\bar f(x),
		\quad\text{in }B_1,
		\]
		where
		\[
		\bar p(x):=p(\tau x),
		\quad
		\bar q(x):=q(\tau x),
		\quad
		\bar F(M):=\frac{\tau^2}{K}F\left(\frac{K}{\tau^2}M\right),
		\]
		\[
		\bar f(x)
		=
		\frac{\tau^{\bar p(x)+2}}{K^{\bar p(x)+1}}f(\tau x),
		\quad
		\text{and}
		\quad
		\bar a(x)
		=
		\tau^{\bar p(x)-\bar q(x)+2}
		K^{\bar q(x)-\bar p(x)-1}
		a(\tau x).
		\]
		The operator $\bar F$ is uniformly $(\lambda,\Lambda)$-elliptic and
		$\bar F(0)=0$.
		Using \eqref{source1}, we get
		\[
		|\bar f(x)|
		\le
		K_1
		\frac{\tau^{\bar p(x)+2+\theta_1}}
		{K^{\bar p(x)+1}}
		|x|^{\theta_1}
		\le
		K_1\frac{\tau^{2+\theta_1}}{K}|x|^{\theta_1},
		\]
		and
		\[
		|\bar a(x)|
		\le
		K_2
		\tau^{\bar p(x)-\bar q(x)+2+\theta_2}
		K^{\bar q(x)-\bar p(x)-1}
		|x|^{\theta_2}
		\le
		K_2\frac{\tau^{2+\theta_2}}{K}|x|^{\theta_2}.
		\]
		Choose $K$ and $\tau$ so that $\|w\|_{\mathrm{L}^{\infty}(B_1)}\le1$,
		\[
		K_1\frac{\tau^{2+\theta_1}}{K}\le\frac{\delta}{2}
		\quad
		\text{and}
		\quad
		K_2\frac{\tau^{2+\theta_2}}{K}\le\frac{\delta}{2}.
		\]
		For instance, this can be achieved by taking $K$ large enough in terms of
		$\|u\|_{\mathrm{L}^{\infty}(B_1)}$, $K_1$, $K_2$ and then $\tau\in(0,1)$ fixed in terms of the
		universal parameters. Hence, $w$ satisfies the normalized assumptions from Step 1.
		
		Applying \eqref{eq:normalized-schauder-final} to $w$, we obtain
		\[
		\sup_{B_s}w
		\le
		Cs^{2+\tilde\alpha},
		\quad\text{for }0<s\le1/2.
		\]
		Rescaling back, with $r=\tau s$, gives
		\[
		\sup_{B_r}(u-u(0))
		\le
		CK\tau^{-(2+\tilde\alpha)}r^{2+\tilde\alpha},
		\quad\text{for }0<r\le\frac{\tau}{2}.
		\]
		Thus the theorem holds for $r_0:= \tau/2$, with a constant depending on $\|u\|_{\mathrm{L}^{\infty}(B_1)}$ and on the universal
		parameters, including $K_1,K_2,\theta_1,\theta_2$. Since $u(x)\ge u(0)$ near the
		origin, this is equivalent to
		\[
		\sup_{B_r}|u(x)-u(0)|
		\le
		C r^{2+\tilde\alpha},
		\quad\text{for }0<r\le r_0.
		\]
		The proof is complete.
	\end{proof}

	\medskip
	
	\noindent{\bf Acknowledgments.} G. Cosmo and R. R. Costa were partly supported by Ph.D. fellowships from the Conselho Nacional de Desenvolvimento Científico e Tecnológico (CNPq), Brazil. This study was financed in part by the Coordenação de Aperfeiçoamento de Pessoal de Nível Superior - Brasil (CAPES) - Finance Code 001.
	
	\medskip

	\bibliographystyle{amsplain, amsalpha}

\end{document}